\newtheorem{theorem}{Theorem}[section]
\newtheorem{lemma}[theorem]{Lemma}
\newtheorem{corollary}[theorem]{Corollary}
\newtheorem{proposition}[theorem]{Proposition}
\newtheorem{notationdefinition}[theorem]{Notation and definition}
\theoremstyle{definition}
\newtheorem{definition}[theorem]{Definition}
\newtheorem{example}[theorem]{Example}
\newtheorem{notation}[theorem]{Notation}
\theoremstyle{remark}
\newtheorem{remark}[theorem]{Remark}
\definecolor{DarkBlue}{rgb}{0,0.1,0.55}
\numberwithin{equation}{section}
\newcommand {\hide}[1]{}
\newcommand {\junk}[1]{}
\newcommand {\R} {{\rm R}}
\newcommand {\Real}[1]   {\mbox{$\mathbb{R}^{#1}$}}
\newcommand {\Sphere}{\mbox{${\bf S}$}}               
 \newcommand {\re}         {\Real{}}
\newcommand {\Z}  {\mathbb{Z}}
 \newcommand {\N}         {\mathbb{N}}
\newcommand {\ZZ} {{\rm Z}}
\newcommand {\V} {\mathbf{V}}
\newcommand {\eps} {{\varepsilon}}
\newcommand {\ep } {{\varepsilon}}
\newcommand {\Id} {\mbox{\rm Id}}
\newcommand {\T}      {{\mbox{\rm T}}}
\newcommand {\Zer} {{\rm Zer}}
\newcommand {\Reali} {{\rm Reali}}
\def\addots{\mathinner{\mkern1mu
\raise1pt\vbox{\kern7pt\hbox{.}}
\mkern2mu\raise4pt\hbox{.}\mkern2mu
\raise7pt\hbox{.}\mkern1mu}}
\newcommand{\defeq}{\;{\stackrel{\text{\tiny def}}{=}}\;}
\newcommand{\X}{\mathbf{X}}
\newcommand{\TT}{\mathbb{T}}
\newcommand{\limit}{\mathrm{limit}}
\newcommand{\card}{\mathrm{card}}
\newcommand{\zz}{\begin{flushright}$\Box$\end{flushright}}
\newcommand{\halfspace}
    {\quad \hspace{-0.12 in}  \hspace{-0.11 in} \quad }
\newcommand{\halfhalfspace}
    {\quad \hspace{-0.14 in}  \hspace{-0.15 in} \quad }
\newcommand{\halfbackspace}
    {\hspace{-0.06 in}}
\newcommand{\fraction}[2]
    {\textstyle{\frac{#1}{#2}}}
\begin{document}
\title[]
{On homotopy types of limits of semi-algebraic sets
and additive complexity of polynomials
}
\author{Sal Barone}
\address{Department of Mathematics,
Purdue University, West Lafayette, IN 47906, U.S.A.}
\email{sbarone@math.purdue.edu}
\author{Saugata Basu}
\address{Department of Mathematics,
Purdue University, West Lafayette, IN 47906, U.S.A.}
\email{sbasu@math.purdue.edu}

\thanks{The
authors were supported in part by an NSF
grant CCF-0634907.
}

\subjclass{Primary 14P10, 14P25; Secondary 68W30}
\date{\textbf{\today}}

\maketitle
\begin{abstract}
We prove that the number of
distinct
homotopy types of limits of one-parameter
semi-algebraic families of closed and bounded semi-algebraic sets is
bounded singly exponentially in the additive complexity of any
quantifier-free first order formula defining the family.  As an
important consequence, we derive that the number of
distinct
homotopy types of
semi-algebraic subsets of $\mathbb{R}^k$ defined by a quantifier-free first
order formula $\Phi$, where the sum of the additive complexities of
the polynomials appearing in $\Phi$ is at most $a$, is bounded by
$2^{(k+a)^{O(1)}}$. 
This proves a conjecture made in \cite{BV06}.
\end{abstract}

\newcommand{\constM}{(p+1)(k+a+2)+2k\binom{p+1}{2}}
\newcommand{\constMprime}{(p+1)(s+2)+3\binom{p+1}{2}+3}

\section{Introduction and statement of the main results}
If $S$ is a semi-algebraic subset of $\mathbb{R}^k$ defined by a
quantifier-free first order formula $\Phi$,
then various topological invariants of $S$ (such as the Betti
numbers) can be bounded in terms of the ``format'' of the
formula $\Phi$ (we define format of a formula more precisely below).
The first results in this direction were proved by Ole{\u\i}nik
and Petrovski{\u\i}
\cite{O,OP} (also independently by
Thom \cite{T}, and Milnor \cite{Milnor2})
who proved singly exponential bounds on the Betti numbers of real algebraic
varieties in $\mathbb{R}^k$  defined by polynomials of degree bounded by $d$. These
results were extended to more general semi-algebraic sets in
\cite{B99,GaV,GVZ04,GV07}.
As a consequence of more general finiteness results of
Pfaffian functions, Khovanski\u{\i} \cite{Kho} proved singly exponential
bounds on the number of connected components of real algebraic varieties
defined by polynomials with a fixed  number of monomials. We refer the
reader to the survey article \cite{BPR10} for a more detailed survey
of results on bounding the Betti numbers of semi-algebraic sets.

A second type of quantitative results on the topology of semi-algebraic
sets, more directly relevant to the current paper, seeks
to obtain tight bounds on the
number of different topological types of semi-algebraic sets definable by
first order formulas of bounded format.
If the format of a first-order
formula is specified by the number and
degrees of the polynomials appearing in it (this is often called the
``dense format'' in the  literature), then
it follows from the well-known Hardt's triviality theorem
for semi-algebraic sets
(see \cite{Hardt, BCR})
that this number is finite. However,
the quantitative bounds on the number of topological types
that follow from the proof of Hardt's theorem
are doubly  exponential (unlike the singly exponential bounds on the
Betti numbers).
For some other notions of format,
the finiteness of topological types while being true is not
an immediate consequence of Hardt's theorem (see below), and tight
quantitative bounds on the number of topological types are lacking.

If instead of homeomorphism types, one considers the weaker notion of
\emph{homotopy types}, then
singly exponential bounds have been obtained on the number
of
distinct
\emph{homotopy types} of  semi-algebraic sets defined
by different classes of formulas of bounded format \cite{BV06, Basu10}.

The main motivation behind this paper
is to obtain a singly exponential bound on the number of distinct  homotopy
types of  semi-algebraic sets defined by polynomials of bounded
``additive complexity'' (defined below) answering a question posed in
\cite{BV06}.

One notion of format that will play an important role in this paper
is that of
``additive complexity''.
Roughly speaking the additive complexity of a
polynomial (see Definition \ref{def:rational_additive} below for a
precise definition) is bounded from above by the number of additions
in any straight line program (allowing divisions) that computes the
values of the polynomial at generic points of $\mathbb{R}^n$.
This measure of complexity strictly generalizes the more familiar
measure of complexity of real polynomials
based on counting the number of monomials in the support
(as in Khovanski\u{\i}'s  theory of ``Fewnomials'' \cite{Kho}),
and is thus of considerable interest in quantitative real
algebraic geometry.
Additive complexity of real univariate polynomials was first considered
in the context of computational complexity theory by Borodin and Cook
\cite{Borodin-Cook76}, who proved an effective bound on the number
of real zeros of an univariate
polynomial in terms of its additive
complexity. This result was further improved upon by
Grigoriev \cite{Grigoriev82}
and Risler \cite{Risler85} who applied
Khovanski\u{\i}'s  results on fewnomials \cite{Kho}.
A surprising
fact conjectured in \cite{BRbook}, and proved by Coste \cite{CosteFew}
and van den Dries \cite{Dries}, is that the number of topological
types of real algebraic varieties defined by polynomials of bounded
additive complexity is finite.

\subsection{Bounding the number of homotopy types of semi-algebraic sets}
The problem of obtaining tight quantitative bounds on the topological
types of semi-algebraic sets defined by formulas of bounded format was
considered in \cite{BV06}.  Several results (with different notions of
formats of formulas) were proved in \cite{BV06}, each giving an
explicit singly exponential (in the number of variables and size of
the format) bound on the number of
distinct
homotopy types of semi-algebraic
subsets of $\mathbb{R}^k$ defined by formulas having format of bounded size.
However, the case of additive complexity was left open in \cite{BV06},
and only a strictly weaker result was proved in the case of
\emph{division-free} additive complexity.
\footnote{Note that what we call
  ``additive complexity'' is called ``rational additive complexity''
  in \cite{BV06}, and what we call ``division-free additive
  complexity'' is called ``additive complexity'' there.}
In order to state this result
precisely, we need a few preliminary definitions.

\begin{definition}
\label{def:additive}
The \emph{division-free additive complexity} of a polynomial is a
non-negative integer,
and we say that
a polynomial $P \in \mathbb{R}[X_1, \ldots ,X_k]$ has \emph{division-free additive
complexity
at most $a$},
$a\geq 0$, if there are polynomials $Q_1, \ldots , Q_a \in \mathbb{R}[X_1,
  \ldots ,X_k]$ such that
\begin{itemize}
\item[(i)]
$Q_1=u_1X_{1}^{\alpha_{11}} \cdots X_{k}^{\alpha_{1k}} +
v_1X_{1}^{\beta_{11}} \cdots X_{k}^{\beta_{1
k}}$,\\
where $u_1, v_1 \in \mathbb{R}$, and
$\alpha_{11}, \ldots ,\alpha_{1k}, \beta_{11}, \ldots , \beta_{1k} \in \N$;

\item[(ii)]
$Q_j=u_jX_{1}^{\alpha_{j1}} \cdots X_{k}^{\alpha_{j k}}
\prod_{1 \le i \le j-1}Q_{i}^{\gamma_{j i}} +
v_jX_{1}^{\beta_{j1}} \cdots X_{k}^{\beta_{j k}}\prod_{1 \le i \le j-1}Q_{i}^{\delta_{ji}}$,\\
where $1 < j \le a$, $u_j, v_j \in \mathbb{R}$, and
$\alpha_{j1}, \ldots ,\alpha_{j k}, \beta_{j1}, \ldots , \beta_{j k},
\gamma_{ji}, \delta_{ji} \in \N$ for $1 \le i <j$;

\item[(iii)]
$P= cX_{1}^{\zeta_{1}} \cdots X_{k}^{\zeta_{k}}\prod_{1 \le j \le a}Q_{j}^{\eta_{j}}$,\\
where $c \in \mathbb{R}$, and $\zeta_1, \ldots , \zeta_k, \eta_1, \ldots ,\eta_a \in \N$.
\end{itemize}
In this case, we say that the above sequence of equations
is a \emph{division-free additive representation} of $P$ of length $a$.
\end{definition}

In other words, $P$ has division-free additive complexity at most $a$ if
there exists a straight line program which,
starting with variables
$X_1, \ldots ,X_m$ and constants in $\mathbb{R}$ and
applying additions and multiplications,
computes $P$ and
which uses at most $a$ additions
(there is no bound on the number of multiplications).
Note that
the additive complexity of a polynomial (cf. Definition \ref{def:rational_additive})
is clearly at most its
division-free additive complexity, but can be much smaller (see
Example \ref{ex:rational_additive} below).

\begin{example}
\label{ex:additive}
The polynomial $P:=(X+1)^d \in \mathbb{R}[X]$ with $0<d \in \Z$, has $d+1$ monomials when expanded but division-free additive complexity at most 1.
\end{example}

\begin{notation}
We denote by
$\mathcal{A}^{\mathrm{div-free}}_{k,a}$ the family of ordered (finite) lists
${\mathcal P}=(P_1, \ldots , P_s)$ of polynomials $P_i \in \mathbb{R}[X_1,\ldots,X_k]$,
with the division-free additive complexity of every $P_i$ not exceeding $a_i$, with
$a=\sum_{1 \le i \le s}a_i$.
Note that $\mathcal{A}^{\mathrm{div-free}}_{k,a}$ is allowed to contain lists
of different sizes.
\end{notation}

Suppose that $\phi$ is a Boolean formula with atoms
$\{p_i,q_i,r_i \mid 1 \leq i \leq s\}$.
For an ordered list ${\mathcal P} = (P_1,\ldots,P_s)$ of polynomials
$P_i \in {\mathbb{R}[X_1,\ldots,X_k]}$, we denote by $\phi_{\mathcal P}$
the formula obtained from $\phi$ by replacing
for each $i,\ 1\leq i \leq s $, the atom $p_i$
(respectively, $q_i$ and $r_i$) by
$P_i= 0$ (respectively, by $P_i > 0$ and by $P_i < 0$).

\begin{definition}\label{def:hom_lists}
We say that two ordered lists ${\mathcal P} = (P_1,\ldots,P_s)$,
${\mathcal Q} = (Q_1,\ldots,Q_s)$
of polynomials $P_i, Q_i \in {\mathbb{R}[X_1,\ldots,X_k]}$ have the same \emph{homotopy
type} if for any Boolean formula $\phi$, the semi-algebraic sets defined by
$\phi_{\mathcal P}$ and $\phi_{\mathcal Q}$ are
homotopy equivalent.
Clearly, in order to be homotopy equivalent two lists should have equal size.
\end{definition}

\begin{example}
\label{eg:homotopy_types}
Consider the lists
$\mathcal{P} = (X_1,X_2^2,X_1^2+X_2^2+1)$
and $\mathcal{Q}= (X_1^3, X_2^4,1)$. It is easy to see that they
have the same homotopy type, since in this case
for
each Boolean formula
$\phi$ with $9$ atoms, the semi-algebraic sets
defined by $\phi_{\mathcal{P}}$ and $\phi_{\mathcal{Q}}$ are
identical.
A slightly more non-trivial example is provided by
$\mathcal{P} = (X_2 - X_1^2, X_2)$ and $\mathcal{Q} = (X_2, X_2 + X_1^2)$.
In this case,
for
each Boolean formula
$\phi$ with $6$ atoms, the semi-algebraic sets
defined by $\phi_{\mathcal{P}}$ and $\phi_{\mathcal{Q}}$ are
not identical but
homeomorphic. Finally, the singleton sequences
$\mathcal{P} = (X_2X_1(X_1-1))$ and $\mathcal{Q} = (X_2(X_1^2 - X_2^4))$
are homotopy equivalent. In this case the semi-algebraic sets
sets
defined by $\phi_{\mathcal{P}}$ and $\phi_{\mathcal{Q}}$ are
homotopy equivalent, but not necessarily
homeomorphic. For instance, the algebraic set
defined by $X_2X_1(X_1-1)=0$ is
homotopy equivalent to
the algebraic set defined by $X_2(X_1^2 - X_2^4)=0$, but they are not
homeomorphic to each other.
\end{example}

The following theorem is proved in \cite{BV06}.

\begin{theorem}\cite{BV06}
\label{thm:additive}
The number of
distinct
homotopy types of ordered lists in
$\mathcal{A}^{\mathrm{div-free}}_{k,a}$
does not exceed
\begin{equation}\label{eq:additive}
2^{O(k+a)^8}.
\end{equation}
In particular, if $\phi$ is any Boolean formula with $3s$ atoms,
the number of
distinct
homotopy types of the  semi-algebraic sets defined
by $\phi_{\mathcal P}$, where
${\mathcal P} = (P_1,\ldots,P_s)\in \mathcal{A}^{\mathrm{div-free}}_{k,a}$,
does not exceed (\ref{eq:additive}).
\end{theorem}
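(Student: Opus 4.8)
We sketch a possible approach, built on Khovanski\u{\i}'s fewnomial theory and on the methods used to bound homotopy types in the semi-algebraic setting.

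\textbf{Step 1: reduction to the intermediate polynomials.} Given $\mathcal P=(P_1,\dots,P_s)\in\mathcal{A}^{\mathrm{div-free}}_{k,a}$, concatenate division-free additive representations of $P_1,\dots,P_s$ into one list of intermediate polynomials $Q_1,\dots,Q_{a'}$ with $a'\le a$, so that each $P_i=c_i\,X_1^{\zeta^{(i)}_1}\cdots X_k^{\zeta^{(i)}_k}\prod_j Q_j^{\eta^{(i)}_j}$. Since the sign of a product of reals is the product of the signs --- with $z^m>0\iff (z>0)\vee(z<0\wedge m\text{ even})$ and $z^m=0\iff z=0$ --- for every Boolean formula $\phi$ there is a Boolean formula $\psi$, depending only on $\phi$, on the signs of the $c_i$, and on the exponents $\zeta^{(i)}_\ell,\eta^{(i)}_j$ through their parities and their vanishing, such that $\phi_{\mathcal P}=\psi_{\mathcal R}$ with $\mathcal R:=(X_1,\dots,X_k,Q_1,\dots,Q_{a'})$. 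Hence the homotopy type of $\mathcal P$ is determined by this finite ``combinatorial type'' together with the way the graph
\[
\Gamma:=\{(x,y)\in\mathbb{R}^{k+a'}\mid y_j=Q_j(x),\ 1\le j\le a'\}
\]
of $x\mapsto(Q_1(x),\dots,Q_{a'}(x))$ meets the coordinate hyperplanes $\{X_\ell=0\}$ and $\{Y_j=0\}$, up to homeomorphism of this stratified configuration: indeed the projection $\pi\colon\Gamma\to\mathbb{R}^k$ onto the first $k$ coordinates is a semi-algebraic homeomorphism carrying sign conditions on $\mathcal R$ to sign conditions on the coordinate functions of $\mathbb{R}^{k+a'}$. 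There are only $2^{(k+a)^{O(1)}}$ combinatorial types, so it remains to bound the number of homeomorphism types of $(\Gamma;\{X_\ell=0\},\{Y_j=0\})$ within a fixed one.

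\textbf{Step 2: $\Gamma$ is a fewnomial variety.} Rewriting the defining relations, $\Gamma$ is the zero set of the $a'$ polynomials $Y_j-u_j\,X^{\alpha_j}\prod_{i<j}Y_i^{\gamma_{ji}}-v_j\,X^{\beta_j}\prod_{i<j}Y_i^{\delta_{ji}}$, each a sum of at most three monomials in the $k+a'$ variables $(X,Y)$. Thus $\Gamma$ is cut out by a system involving at most $3a'$ monomials in $k+a'$ variables; its degree is unbounded, but its Khovanski\u{\i} fewnomial complexity is polynomial in $k$ and $a$. This is where the unboundedness of the degrees is absorbed: Khovanski\u{\i}'s theory and its extensions bound the topology of such $\Gamma$ --- Betti numbers, and, crucially, the number of nondegenerate critical points of a generic linear projection $\Gamma\to\mathbb{R}$ --- singly exponentially in $k$ and $a$ and \emph{completely independently of the exponents}. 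Equivalently, passing to the Pfaffian category by the substitution $X_\ell=\pm e^{\xi_\ell}$ on each orthant turns every monomial into a constant sign times the exponential of a linear form, making the bounded Pfaffian format manifest (the integer exponents survive only as coefficients of those linear forms, and therefore do not enter the effective estimates).

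\textbf{Step 3: from topology bounds to a bound on the number of homotopy types.} One must finally upgrade the bounds for a single $\Gamma$ to a bound on the number of homeomorphism types of $(\Gamma;\{X_\ell=0\},\{Y_j=0\})$ as the continuous data vary --- the coefficients $u_j,v_j,c_i$ and, within a fixed monomial-support pattern, the exponents, viewed in the Pfaffian model as continuous parameters. This follows the template of the semi-algebraic proofs of singly exponential bounds on the number of homotopy types in \cite{BV06}: choose a generic projection, stratify the parameter space into a controlled number of pieces over each of which the family is topologically trivial by a Morse-theoretic (Thom--Mather) argument, and bound the number of pieces and of fiber-homotopy types over each piece using the fewnomial/Pfaffian critical-point estimates of Step 2. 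Carried out in the Pfaffian setting and then summed over the $2^k$ orthants and the $2^{(k+a)^{O(1)}}$ combinatorial types of Step 1, this yields a bound $2^{(k+a)^{O(1)}}$; tracking the exponents through the several composed applications of the effective Pfaffian estimates produces the explicit $2^{O(k+a)^8}$, and the ``in particular'' clause is then immediate from Definition \ref{def:hom_lists}.

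\textbf{Main obstacle.} Steps 1 and 2 are essentially bookkeeping; the real work is Step 3. The effective Pfaffian/fewnomial machinery has to be pushed past Betti-number bounds to a bound on the number of homotopy (or homeomorphism) types of a whole \emph{family}, which demands the Morse theory over a stratified parameter space, the genericity of the projection, and --- the delicate point --- uniformity over the unbounded exponents, i.e. the fact that in the Pfaffian model they enter only as coefficients of the linear forms in the Pfaffian chain and so are invisible to the Khovanski\u{\i} estimates. It is the repeated composition of these singly exponential estimates that forces the large constant exponent $8$.
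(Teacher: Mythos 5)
The paper does not actually prove Theorem \ref{thm:additive}: it is quoted from \cite{BV06}, and the only ingredient the paper recalls (in Section 1.2) is the Grigoriev--Risler reduction \cite{Grigoriev82,Risler85} to the fewnomial case --- introduce one new variable per intermediate polynomial $Q_{ij}$, cut out its graph by the trinomial equations of Definition \ref{def:additive}, and observe that, because all exponents are non-negative, the projection back to $\mathbb{R}^k$ is a homeomorphism carrying sign conditions on the $P_i$ to sign conditions on monomials in the $k+a$ variables. Your Steps 1 and 2 are exactly this reduction (with the minor bookkeeping caveat that the homeomorphism type of the configuration must record the sign data, i.e.\ the sides of the hypersurfaces $\{X_\ell=0\}$, $\{Y_j=0\}$, not just the zero sets). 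The divergence is in Step 3: where the paper simply applies the fewnomial-case homotopy-type bound proved in \cite{BV06}, you attempt to sketch a proof of that bound, and what you write --- generic projection, Thom--Mather/Morse-theoretic triviality over a stratified parameter space of coefficients and (Pfaffian-model) exponents, counting strata by Khovanski\u{\i}-type estimates --- is a plan rather than an argument, as you yourself flag under ``main obstacle''. In spirit it is close to what \cite{BV06} actually does (their general theorem on homotopy types of fibres of definable families, with exponents turned into real parameters by the exponential substitution on each orthant), but none of the quantitative steps (how the strata are produced, why their number is singly exponential, how the exponent $8$ arises) is carried out. So: if citing the fewnomial case of \cite{BV06} is allowed, your proof is complete and coincides with the paper's route; if the goal was a self-contained proof, Step 3 is a genuine gap, and it is precisely the hard content of the cited theorem rather than something recoverable from the reduction in Steps 1--2.
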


\begin{remark}
The bound in \ref{eq:additive} in Theorem \ref{thm:additive}
is stated in a slightly different form than in the original paper, to take
into account the fact that by our definition the division-free
additive complexity of a polynomial (for example, that of a monomial)
is allowed to be $0$. This is not an important issue (see Remark
\ref{rem:zero} below).
\end{remark}

The additive complexity of a polynomial is defined
as follows \cite{Borodin-Cook76, Grigoriev82, Risler85, BRbook}.

\begin{definition}
\label{def:rational_additive}
A polynomial $P \in \mathbb{R}[X_1, \ldots ,X_k]$ is said to have
\emph{additive complexity}
at most $a$ if there are \emph{rational functions}
$Q_1, \ldots , Q_a \in \mathbb{R}(X_1, \ldots ,X_k)$ satisfying conditions
(i), (ii), and (iii)
in Definition \ref{def:additive}
with $\N$ replaced by $\Z$.
In this case we say that the above sequence of equations
is an additive representation of $P$ of length $a$.
\end{definition}

\begin{example}
\label{ex:rational_additive}
The polynomial $X^d+ \cdots + X+1 =(X^{d+1}-1)/(X-1)\in \mathbb{R}[X]$
with $0<d \in \Z$, has  additive complexity
(but not division-free additive complexity) at most $2$ (independent
of $d$).
\end{example}

\begin{notation}
We denote by
$\mathcal{A}_{k,a}$ the family of ordered (finite) lists
${\mathcal P}=(P_1, \ldots , P_s)$ of polynomials $P_i \in \re[X_1,\ldots,X_k]$,
with the  additive complexity of every $P_i$ not exceeding $a_i$,
with
$a=\sum_{1 \le i \le s}a_i$.
\end{notation}

It was conjectured in \cite{BV06} that
Theorem \ref{thm:additive} could be strengthened by replacing
$\mathcal{A}^{\mathrm{div-free}}_{k,a}$ by $\mathcal{A}_{k,a}$.
In this paper we prove this conjecture. More formally, we prove

\begin{theorem}
\label{thm:main1}
The number of
distinct
homotopy types of ordered lists in
$\mathcal{A}_{k,a}$
does not exceed
$2^{(k+a)^{O(1)}}$. 
\end{theorem}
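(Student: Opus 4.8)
The natural strategy is reduction to the division-free case (Theorem \ref{thm:additive}) via a limit argument, and this is where the abstract's mention of ``limits of one-parameter semi-algebraic families'' comes in. Given a list $\mathcal{P} = (P_1,\ldots,P_s) \in \mathcal{A}_{k,a}$ with additive representations using rational functions $Q_1,\ldots,Q_a$, the obstruction to applying Theorem \ref{thm:additive} directly is the presence of negative exponents (divisions). The plan is to clear denominators in a controlled way: introduce a single infinitesimal parameter $\eps$ (or a new variable), and perturb each rational subexpression $Q_j$ so that the perturbed $\tilde Q_j$ becomes a genuine polynomial of controlled division-free additive complexity, while the semi-algebraic set defined by $\phi_{\mathcal P}$ is recovered as the limit (as $\eps \to 0^+$) of the sets defined by $\phi_{\tilde{\mathcal P}}$. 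Concretely, each denominator of $Q_j$ is a monomial in the earlier $Q_i$'s and the $X$'s; on the locus where these denominators vanish the rational function blows up, and one multiplies through / substitutes to push this bad locus ``to infinity'' or deforms it away using $\eps$. Since the denominators are themselves products of division-free-complexity-bounded quantities, the total division-free additive complexity of the resulting family is bounded by some polynomial in $k$ and $a$, and the number of extra variables/parameters introduced is $O(a)$.

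The second ingredient is the main topological theorem of the paper (the one the abstract advertises and whose statement presumably precedes this one in the full paper): the number of distinct homotopy types of \emph{limits} of one-parameter semi-algebraic families of closed bounded semi-algebraic sets is bounded singly exponentially in the division-free additive complexity of a quantifier-free formula defining the family. Applying that theorem to the family $\phi_{\tilde{\mathcal P}}$ constructed above — after the routine step of replacing the (possibly non-closed, non-bounded) sets $\phi_{\mathcal P}$-defined sets by closed bounded ones that carry the same homotopy information, e.g. via the standard trick of intersecting with a large ball and using the $\eps$-thickening/shrinking constructions that realize a locally closed semi-algebraic set up to homotopy as a closed bounded one — yields that the number of homotopy types of the limit sets, hence of the original sets $\phi_{\mathcal P}$, is $2^{(k+a+(\text{new vars}))^{O(1)}} = 2^{(k+a)^{O(1)}}$. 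One must also quotient out the choice of Boolean formula $\phi$ in the sense of Definition \ref{def:hom_lists}: the bound must be uniform over all $\phi$, which is exactly the form in which the division-free/limit theorems are stated (they bound homotopy types of lists, not of individual sets).

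The main obstacle I anticipate is the denominator-clearing step: one needs the perturbation $Q_j \mapsto \tilde Q_j$ to be simultaneously (a) correct, meaning $\phi_{\mathcal P}$ is genuinely the semi-algebraic limit of $\phi_{\tilde{\mathcal P}}$ preserving homotopy type (this requires care because taking limits does not commute with Boolean operations, so one likely works with a single well-chosen closed formula rather than all $\phi$ at once, then transfers), and (b) complexity-bounded in the division-free sense, which requires that introducing $\eps$ and re-expressing $1/(\text{monomial in } Q_i, X)$ does not blow up the additive complexity — here the recursive structure of the additive representation is essential, since each denominator is built from previously-constructed $Q_i$'s. Handling the interaction between the infinitesimal $\eps$ and the real closed field (working over $\RR\la\eps\ra$ and then specializing) is technical but standard in this literature; the genuinely new work is packaging the rational additive representation into a division-free family over the extended field whose limit recovers the original, which is presumably the content of the intermediate results the paper proves before this theorem.
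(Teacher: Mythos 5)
Your overall strategy is the same as the paper's: push the divisions into a controlled place, realize the sets in question as limits of one-parameter families of bounded \emph{division-free} additive format, and invoke the limit theorem (Theorem \ref{thm:main_weak}). But the step you explicitly leave open is precisely the paper's core contribution, and the mechanisms you sketch for it would fail. Perturbing each rational subexpression $Q_j$ into a polynomial $\tilde Q_j$ of bounded division-free complexity is not possible in any useful sense (there is no bounded-complexity polynomial family tending to $1/X_1$ that keeps the zero set of the final expression under control), and ``multiplying through'' by denominators changes the set: if $F=P/Q$, then $\Zer(P,\mathbb{R}^k)=\Zer(F,\mathbb{R}^k)\cup\Zer(Q,\mathbb{R}^k)$, which is wrong on the locus $Q=0$. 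The paper's fix has two precise ingredients. First, Lemma \ref{lem:equivalence} (van den Dries): any additive representation can be normalized, without increasing its length, so that negative exponents occur only in the last step; hence $F=P/Q$ with the division-free additive complexities of $P$ and $Q$ controlled by $a$. Second, Proposition \ref{prop:sectionfivemain}: the family $\TT=\{(\x,t): P^2(\x)\leq t(Q^2(\x)-t^N),\ |\x|^2\leq R^2\}$ with the calibrated exponent $N=2\deg(Q)+1$ satisfies $\TT_{\limit}=\Zer(F,\mathbb{R}^k)\cap\overline{B_k(0,R)}$; the $t^N$ term is exactly what allows limit points with $Q(\x)=0$ but $F(\x)=0$ to be recovered (via a generic line through $\x$, comparing multiplicities $\mathrm{mult}_0$ of $\widehat P$ and $\widehat Q$) while excluding points with $Q(\x)=0$, $F(\x)\neq 0$. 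Without this calibration (or an equivalent one) your correctness requirement (a) does not go through, and your acknowledgement that this is ``presumably the content of the intermediate results'' concedes the gap rather than closing it.

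The second missing piece is the general semi-algebraic case, which is where Theorem \ref{thm:main1} actually lives. The limit construction applies only to atoms of the form $F_i=0$: Proposition \ref{prop:level1} requires a formula with no negations and no inequalities, so bounding homotopy types of all $\phi_{\mathcal P}$ does not follow from ``intersect with a large ball and thicken.'' The paper first reduces to $\mathcal{P}$-closed formulas (Gabrielov--Vorobjov), then converts weak inequalities into equations with slack variables ($F_i-V_i^2=0$, the formulas $\Phi^\dagger_{R,R'}$ of Proposition \ref{prop:sectionfivemain2}), which introduces a projection $\pi_{R,R'}$ whose effect on homotopy type must be controlled; this is done with the semi-algebraic fibered join $\mathcal{J}^p_{\pi_{R,R'}}$, which is $p$-equivalent to the image by Proposition \ref{prop:5}, and only then is Theorem \ref{thm:main_weak} applied to the associated limit family (Propositions \ref{prop:boundonT} and \ref{prop:closedandbounded}), with $p=k+1$ and Proposition \ref{prop:top_basic} converting $p$-equivalence into a bound on homotopy types. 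Your plan flags that limits do not commute with Boolean operations but offers no mechanism for inequalities; that is a substantive missing idea, not routine bookkeeping.
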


\subsection{Additive complexity and limits of semi-algebraic sets}
The proof of Theorem \ref{thm:additive} in \cite{BV06} proceeds by reducing the
problem to the case of bounding the number of
distinct
homotopy types of
semi-algebraic sets defined by polynomials having a bounded number
of monomials.
The reduction
which was already used by Grigoriev \cite{Grigoriev82} and Risler \cite{Risler85}
is as follows.
Let  ${\mathcal P} \in \mathcal{A}^{\mathrm{div-free}}_{k,a}$ be an ordered list.
For each polynomial $P_i \in {\mathcal P}$, $1 \le i \le s$,
consider the sequence of polynomials
$Q_{i 1}, \ldots, Q_{i a_i}$ as in Definition~\ref{def:additive}, so that
$$
P_i:=c_i X_{1}^{\zeta_{i 1}} \cdots X_{k}^{\zeta_{i k}}\prod_{1 \le j \le a_i}Q_{i j}^{\eta_{i j}}.
$$
Introduce $a_i$ new variables
$Y_{i1}, \ldots ,Y_{i a_i}$.
Fix a semi-algebraic set $S \subset \mathbb{R}^m$,
defined by a formula $\phi_{\mathcal P}$.
Consider
the semi-algebraic set $\widehat S$, defined by the conjunction of $a$
3-nomial equations obtained from equalities in (i), (ii) of
Definition~\ref{def:additive} by replacing
$Q_{i j}$ by $Y_{i j}$
for all $1 \le i \le s$, $1 \le j \le a_k$, and the formula $\phi_{\mathcal P}$
in which every occurrence of an atomic formula
of the kind $P_k \ast 0$, where $\ast \in \{ =, >, < \}$,
is replaced by the formula
$$
c_i X_{1}^{\zeta_{i 1}} \cdots X_{k}^{\zeta_{i k}}\prod_{1 \le j \le a_i}Y_{i j}^{\eta_{i j}}
\ast 0.
$$
Note that $\widehat S$ is a
semi-algebraic subset of $\mathbb{R}^{k+a}$.

Let $\rho:\> \mathbb{R}^{k+a} \to \mathbb{R}^k$
be the projection map on the subspace
spanned by
$X_1, \ldots ,X_k$.
It is clear that the restriction $\rho_{\widehat S}:\> \widehat S \to S$ is a
homeomorphism, and moreover $\widehat S$ is defined by polynomials having
at most $k+a$ monomials.
Thus, in order to bound the number of
distinct
homotopy types
for $S$, it suffices to bound the same number for $\widehat{S}$, but
since   $\widehat S$ is defined by at most $2 a$ polynomials in
$k+a$ variables having at most $k+a$ monomials in total,
we have reduced the problem of bounding the number
of distinct homotopy types occurring in
$\mathcal{A}^{\mathrm{div-free}}_{k,a}$,
to that of bounding the the number
of distinct homotopy types of semi-algebraic sets
defined by at most $2 a$ polynomials
in $k+a$ variables,
with the total number of monomials appearing bounded by $k+a$.
This allows us to apply a bound proved in the fewnomial case in \cite{BV06},
to obtain
a singly exponential bound on the number of distinct homotopy types
occurring in  $\mathcal{A}^{\mathrm{div-free}}_{k,a}$.

Notice that for the map $\rho_{\widehat S}$ to be a homeomorphism it is
crucial that the exponents $\eta_{i j},\gamma_{i j},\delta_{i j}$
be non-negative, and this restricts
the proof to the case of division-free additive complexity.
We overcome this difficulty as follows.

Given a polynomial $F \in \mathbb{R}[X_1,\ldots,X_k]$ with
additive complexity bounded by $a$, we
prove that $F$ can be expressed as a quotient $\frac{P}{Q}$ with
$P,Q \in \mathbb{R}[X_1,\ldots,X_k]$ with
the sum of the \emph{division-free} additive complexities of $P$ and $Q$ bounded by $a$
(see Lemma \ref{lem:equivalence} below).
We then express the set of real zeros of $F$ in $\mathbb{R}^k$ inside any
fixed closed ball
as the Hausdorff limit of a
one-parameter semi-algebraic family defined using the polynomials
$P$ and $Q$ (see
Proposition \ref{prop:sectionfivemain} and the accompanying
Example \ref{eg:main} below).

While the limits of one-parameter semi-algebraic families defined by
polynomials with bounded division-free additive complexities
themselves can have complicated descriptions which cannot be described
by polynomials of bounded division-free additive complexity, the
topological complexity (for example, measured by their Betti numbers)
of such limit sets are well controlled.  Indeed, the problem of
bounding the Betti numbers of Hausdorff limits of one-parameter
families of semi-algebraic sets was considered by Zell in
\cite{Hausdorff}, who proved a singly exponential bound on the Betti
numbers of such sets.  We prove in this paper (see
Theorems \ref{thm:main_weak} and  \ref{thm:main} below)
that the number of
distinct
homotopy types of such limits
can indeed be bounded singly exponentially in terms of the format of
the formulas defining the one-parameter family.  The techniques
introduced by Zell in \cite{Hausdorff} (as well certain semi-algebraic
constructions described in \cite{BZ09}) play a crucial role in the
proof of our bound.  These intermediate results may be of independent
interest.

Finally, applying
Theorem \ref{thm:main_weak}
to the one-parameter
family referred to in the previous paragraph, we obtain a bound
on the number of
distinct
homotopy types of real algebraic varieties defined by
polynomials having bounded additive complexity. The semi-algebraic
case requires certain additional techniques and is dealt with in
Section \ref{subsec:semi-algebraic}.

\subsection{Homotopy types of limits of semi-algebraic sets}
In order to state our results on bounding the number of
distinct
homotopy types of limits
of one-parameter families of semi-algebraic sets we need to introduce
some notation.

\begin{notation}
For any first order formula $\Phi$
with $k$ free variables, if $\mathcal{P}\subset \mathbb{R}[X_1,\dots,X_k]$
consists of the polynomials appearing in $\Phi$, then we call $\Phi$ a $\mathcal{P}$-formula.
\end{notation}

\begin{notation}[Format of first-order formulas]
\label{notation:format}
Suppose $\Phi$ is a
$\mathcal{P}$-formula defining a semi-algebraic subset of $\mathbb{R}^k$ involving  $s$
  polynomials of degree at most $d$.  In this case we say that $\Phi$
  has \emph{dense format} $(s,d,k)$.
If $\mathcal{P}\in \mathcal{A}_{k,a}$
then we say that $\Phi$ has
\emph{additive format bounded by} $(a,k)$.
If $\mathcal{P}\in \mathcal{A}^{\mathrm{div-free}}_{k,a}$ then
we say that $\Phi$ has \emph{division-free additive format bounded by} $(a,k)$.
\end{notation}

\begin{remark}{\label{rem:zero}}
A monomial has additive complexity
0, but every $\mathcal{P}$-formula with $\mathcal{P}\subset \mathbb{R}[X_1,\ldots,X_k]$ containing only monomials is equivalent to a $\mathcal{P}'$-formula, where $\mathcal{P}'=\{X_1,\ldots,X_k\}$.
In particular, if $\phi$ is a $\mathcal{P}$-formula with
(division-free) additive format bounded by $(a,k)$, then $\phi$ is equivalent to a $\mathcal{P}'$-formula having
(division-free)
additive format bounded by $(a,k)$ and such that the cardinality of $\mathcal{P}'$ is
at most $a+k$.
\end{remark}

\begin{notation}{\label{not:limit}}
For
any $k \geq 1$, and
 $ 1\leq p \leq q \leq k$,
we denote by  $\pi_{[p,q]}: \mathbb{R}^k=\mathbb{R}^{[1,k]}\rightarrow \mathbb{R}^{[p,q]}$ the projection
$$(x_1, \ldots, x_k)\mapsto (x_p, \ldots, x_q)$$
(omitting the dependence on $k$ which should be clear from context).
In case $p=q$ we
will denote by $\pi_p$ the projection $\pi_{[p,p]}$. %
For any semi-algebraic subset
$X\subset \mathbb{R}^{k+1}$,
 and $\lambda\in \mathbb{R}$,
we denote by $X_\lambda$ the following semi-algebraic subset of $\mathbb{R}^k$:
$$X_\lambda = \pi_{[1,k]}(X\cap \pi_{k+1}^{-1}(\lambda)).$$
We denote by $\mathbb{R}_+$ the set of strictly positive elements of $\mathbb{R}$.
If additionally $X\subset \mathbb{R}^k\times \mathbb{R}_+$, then we denote by $X_{\limit}$
the following semi-algebraic subset of $\mathbb{R}^k$:
$$ X_{\limit} := \pi_{[1,k]}(\overline{X} \cap \pi_{k+1}^{-1}(0)),$$
where $\overline{X}$ denotes the topological closure of $X$ in
$\mathbb{R}^{k+1}$.
\zz
\end{notation}

We have the following theorem
which establishes a singly exponential bound on
the number of
distinct
homotopy types of the Hausdorff limit
of a one-parameter family of compact semi-algebraic sets defined by a
first-order formula of bounded additive format. This result complements
the result in \cite{BV06} giving singly exponential bounds on the
homotopy types of semi-algebraic sets defined by first-order
formulas having bounded
division-free
additive format on one hand, and the
result of Zell \cite{Hausdorff} bounding the Betti numbers
of the Hausdorff
limits of one-parameter families of semi-algebraic sets on the other,
and could be of independent interest.

\begin{theorem}
\label{thm:main}
For each $a,k\in \mathbb{N}$,
there exists a finite collection
$\mathcal{S}_{k,a}$
of semi-algebraic subsets of
$\mathbb{R}^N$, $N=(k+2)(k+1)+\binom{k+2}{2}$, with
$\card\; \mathcal{S}_{k,a}=2^{(k+a)^{O(1)}}$,
which satisfies
the following property.
If $\TT\subset \mathbb{R}^k\times \mathbb{R}_+$
is a bounded semi-algebraic set described by a formula having
additive format bounded by $(a,k+1)$
such that $\TT_t$ is closed for each $t>0$,
then
$\TT_{\limit}$
is
homotopy equivalent to some
$S\in \mathcal{S}_{k,a}$
(cf. Notation \ref{not:limit}).
\end{theorem}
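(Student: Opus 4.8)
The plan is to reduce Theorem \ref{thm:main} to a statement about a \emph{single} fixed semi-algebraic family living in a bounded-dimensional ambient space, so that Hardt triviality (or rather, a quantitative version coming from an explicit stratification) produces the finite list $\mathcal{S}_{k,a}$. The key observation is that although the limit set $\TT_{\limit}$ itself need not be cut out by polynomials of controlled format, its homotopy type is determined by a diagram of maps between sets that \emph{do} have controlled format. Concretely, I would first invoke Remark \ref{rem:zero} to replace $\TT$ by a semi-algebraic set described using at most $a+(k+1)$ polynomials, and then apply the reduction recalled after Theorem \ref{thm:additive}: introducing one new variable $Y_{ij}$ for each intermediate rational function $Q_{ij}$ in the additive representations (there are at most $a$ of them in total), one obtains a semi-algebraic set $\widehat{\TT}$ in $\mathbb{R}^{(k+1)+a+\cdots}$ defined by a bounded number of polynomials of bounded degree, all of whose data (number, degrees, number of variables) are polynomially bounded in $k+a$, such that $\widehat{\TT}$ maps homeomorphically onto $\TT$. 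The subtlety, as the paper flags, is that the $Y_{ij}$-substitution involves \emph{negative} exponents, so the homeomorphism $\rho_{\widehat{\TT}}$ is only defined over the locus where the relevant $Q_{ij}$ are nonzero; one clears denominators via Lemma \ref{lem:equivalence} to write things as quotients $P/Q$ with $P,Q$ of bounded division-free additive complexity, and this is exactly where the \emph{limit} enters — passing to $\TT_{\limit}$ absorbs the boundary locus $\{Q=0\}$.

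**Next I would** set up the limit on the model $\widehat{\TT}$. Using the ``$\eps$-thickening'' / Puiseux-deformation technique of Zell \cite{Hausdorff} (and the semi-algebraic constructions of \cite{BZ09}), the Hausdorff limit $\TT_{\limit}$ is homotopy equivalent to the fiber at $t=0$ of the closure of an explicitly-constructed bounded family whose \emph{total space} has bounded dense format. The point of Zell's method is that $\overline{\widehat{\TT}}\cap\pi_{k+2}^{-1}(0)$, while possibly having bad local structure, retracts onto — or is the image under a simplicial map of — a ``diagonal''-type construction $\widehat{\TT}^{[p]}$ of $p$-fold fibered products, for $p$ bounded in terms of the dimension; and $\widehat{\TT}^{[p]}$ has dense format $(s',d',N')$ with $s',d',N'$ all polynomially bounded in $k+a$. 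This is why the ambient dimension $N=(k+2)(k+1)+\binom{k+2}{2}$ in the statement is a fixed polynomial in $k$: it is precisely the dimension needed to carry $k+1$ base variables, the auxiliary variables from the additive reduction collapsed appropriately, and the $\binom{p+1}{2}$-many extra coordinates of the fibered-product/diagonal construction with $p=k+1$ (matching the constant $\constM$ defined in the preamble).

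**Then**, with $\TT_{\limit}$ realized up to homotopy as a semi-algebraic set $\widehat{\TT}^{[p]}$ in the fixed space $\mathbb{R}^N$, cut out by at most $s'$ polynomials of degree at most $d'$ in $N$ variables where $(s',d')$ range over a set of size $2^{(k+a)^{O(1)}}$ (there are only that many choices of \emph{combinatorial type} of the reduction data — which $Q_{ij}$ appear, with which exponent patterns, bounded by the singly-exponential counting already done in \cite{BV06} for the fewnomial case), I would apply the singly-exponential bound on homotopy types of semi-algebraic sets of bounded \emph{dense} format — this is one of the main theorems of \cite{BV06}, phrased there as a finite list $\mathcal{S}$ of representatives. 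Taking the union over all the (singly-exponentially many) combinatorial types of the corresponding finite lists gives $\mathcal{S}_{k,a}$ with $\card\,\mathcal{S}_{k,a}=2^{(k+a)^{O(1)}}$, and every $\TT_{\limit}$ is homotopy equivalent to a member of it. I would state and use Theorem \ref{thm:main_weak} here as the black box that performs the ``bounded-format family $\Rightarrow$ finite list of homotopy types of limits'' step once the family has been put into the fixed model.

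**The main obstacle** I expect is the second step: controlling the homotopy type of the Hausdorff limit by a \emph{uniformly} bounded-format construction. One must show that Zell's $p$-fold fibered product construction, applied fiberwise to a one-parameter family and then taken to the limit $t\to 0^+$, (a) is homotopy equivalent to $\TT_{\limit}$ — this needs a Vietoris–Begle / descent spectral sequence argument showing the natural map from the (fat) diagonal construction to $\TT_{\limit}$ induces isomorphisms on homotopy, or at least on cohomology together with simple-connectivity control; and (b) yields a set whose defining polynomials have format bounded polynomially in $k+a$ uniformly over all $\TT$ in the family — this requires that the number of Puiseux branches, the degrees in $\eps$, and the fibered-product degree $p$ are all controlled, which is where the closedness hypothesis ``$\TT_t$ closed for $t>0$'' and boundedness of $\TT$ get used (to guarantee the limit is genuinely the Hausdorff limit and that no branches escape to infinity). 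Handling the passage from algebraic to general semi-algebraic $\TT$ — allowing strict inequalities in the formula — is the remaining technical wrinkle, deferred in the paper to Section \ref{subsec:semi-algebraic}, and amounts to a standard perturbation of inequalities into equalities at the cost of doubling the polynomial count, which does not affect the singly-exponential bound.
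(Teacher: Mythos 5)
Your proposal has a genuine gap at its first and central step. You propose to apply the additive-complexity reduction to the family $\TT$ itself, writing each defining polynomial as a quotient $P/Q$ (Lemma \ref{lem:equivalence}) and asserting that ``passing to $\TT_{\limit}$ absorbs the boundary locus $\{Q=0\}$''. This is not justified, and as stated it is false: the parameter $t$ already present in $\TT$ is unrelated to the deformation parameter needed to recover $\{F=0\}$ from the pair $(P,Q)$. Replacing occurrences of $F$ by expressions in $P$ and $Q$ changes every fiber $\TT_t$, $t>0$, along $\{Q=0\}$, and hence can change $\TT_{\limit}$; Proposition \ref{prop:sectionfivemain} recovers $\Zer(F,\mathbb{R}^k)$ only as the limit of the very specific auxiliary family $P^2\leq t(Q^2-t^N)$, $N=2\deg Q+1$, in a \emph{fresh} parameter, and its proof (genericity of $\textbf{v}$, comparison of multiplicities $\nu>\mu$) does not let you reuse the existing parameter. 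Your route therefore forces an iterated limit (first a new parameter $u\to 0$ to recover the additive-format fibers, then $t\to 0$), and you give no argument controlling the homotopy type of such a double limit. The paper decouples the two limits in the opposite order: Proposition \ref{prop:main} is applied directly to the additive-format family $\TT$ with $p=k+1$, producing the thickened diagonal $\mathcal{D}^{k+1}\subset\mathbb{R}^N$, a genuine (non-limit) semi-algebraic set of bounded \emph{additive} format that is $(k+1)$-equivalent to $\TT_{\limit}$; only then is the additive-to-division-free conversion performed, by quoting Theorem \ref{thm:main1} (whose own proof introduces the new limit parameter and invokes Theorem \ref{thm:main_weak}); Proposition \ref{prop:top_basic} finishes. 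In short, the proof of Theorem \ref{thm:main} is the proof of Theorem \ref{thm:main_weak} verbatim with Theorem \ref{thm:additive} replaced by Theorem \ref{thm:main1}, and Theorem \ref{thm:main1} is the essential input your argument never uses.

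A second, related error is your repeated appeal to bounded \emph{dense} format: in the additive-complexity setting the degrees (the exponents $\alpha,\beta,\gamma,\delta,\eta$) are unbounded, so neither $\widehat{\TT}$ nor the diagonal/fibered-join construction has dense format bounded in terms of $k+a$, and there are infinitely many ``exponent patterns'', not singly exponentially many combinatorial types. The counting must go through the fewnomial or (division-free) additive format results --- Theorem \ref{thm:additive}/Theorem \ref{thm:main_weak}, or Theorem \ref{thm:main1} --- while a dense-format bound from \cite{BV06}, or the Hardt-triviality stratification you mention at the outset, would reintroduce a dependence on degree (indeed a doubly exponential one) that the theorem is designed to avoid.
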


The rest of the paper is devoted to the proofs of Theorems \ref{thm:main}
and \ref{thm:main1} and is organized as follows. We first prove
a weak version (Theorem \ref{thm:main_weak}) of
Theorem \ref{thm:main} in Section \ref{sec:main}, in which the term
``additive complexity'' in the statement of Theorem \ref{thm:main}
is replaced by the term
``division-free additive complexity''.
Theorem \ref{thm:main_weak} is
then used  in Section \ref{sec:main1}
to prove Theorem \ref{thm:main1} after introducing some additional
techniques, which in turn is
used to prove Theorem \ref{thm:main}.

\section
{Proof of a weak version of Theorem \ref{thm:main}}
\label{sec:main}

In this section we prove the following weak version of Theorem \ref{thm:main}
(using \emph{division-free} additive format rather than additive format) which
is needed in the proof of Theorem \ref{thm:main1}.

\begin{theorem}
\label{thm:main_weak}
For each $a,k\in \mathbb{N}$,
there exists a finite collection
$\mathcal{S}_{k,a}$
of semi-algebraic subsets of
$\mathbb{R}^N$, $N=(k+2)(k+1)+\binom{k+2}{2}$, with
$\card\; \mathcal{S}_{k,a}= 2^{O(k(k^2+a))^{8}} = 
2^{(k+a)^{O(1)}}
$,
which satisfies
the following property.
If
$\TT\subset \mathbb{R}^k\times \mathbb{R}_+$
is a bounded semi-algebraic set described by a formula having
\emph{division-free}
additive format bounded by $(a,k+1)$
such that $\TT_t$ is closed for each $t>0$,
then
$\TT_{\limit}$
is
homotopy equivalent to some $S\in \mathcal{S}_{k,a}$
(cf. Notation \ref{not:limit}).
\end{theorem}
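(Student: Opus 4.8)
The plan is to reduce the study of the Hausdorff limit $\TT_{\limit}$ to the topology of a semi-algebraic set living in a fixed-dimensional ambient space (dimension $N$ depending only on $k$), defined by polynomials with a controlled number of monomials, and then invoke the fewnomial bound on homotopy types from \cite{BV06}. First I would apply the reduction already recalled in the introduction: since $\TT$ has division-free additive format bounded by $(a,k+1)$, after introducing at most $a$ auxiliary variables $Y_{ij}$ and replacing each $Q_{ij}$ by $Y_{ij}$ we obtain a semi-algebraic set $\widehat{\TT}\subset \mathbb{R}^{k+1+a}$ that maps homeomorphically onto $\TT$ under the coordinate projection, and which is defined by at most $2a$ polynomials having at most $(k+1+a)$ monomials in total. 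Crucially, taking Hausdorff limits commutes with this homeomorphism in the appropriate sense (the projection $\rho$ is proper on the closure over a compact base), so it suffices to bound the number of homotopy types of $\widehat{\TT}_{\limit}$, now a limit of a family defined by fewnomials.

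Next I would bring in Zell's construction from \cite{Hausdorff} (and the semi-algebraic refinements of \cite{BZ09}): the Hausdorff limit of a one-parameter family of compact semi-algebraic sets is homotopy equivalent to a set obtainable from the family by a diagonal/symmetric-product type construction together with a projection. Concretely, one realizes $\widehat{\TT}_{\limit}$ up to homotopy as the image under a linear projection of an intersection of the (closure of the) family with finitely many ``diagonal'' copies — this is where the ambient dimension $N=(k+2)(k+1)+\binom{k+2}{2}$ comes from (roughly $k+2$ copies of the $(k+1)$-dimensional total space plus $\binom{k+2}{2}$ extra coordinates for the pairwise distance/limit data). The key point is that this construction, applied to a fewnomial family, produces a semi-algebraic set still defined by polynomials with a number of monomials bounded polynomially in $k+a$ (taking products of fewnomials and linear changes of coordinates only multiplies monomial counts in a controlled way), living in $\mathbb{R}^N$ with $N=N(k)$ fixed.

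Finally, I would apply the singly exponential bound on the number of homotopy types of fewnomial semi-algebraic sets from \cite{BV06} (the fewnomial analogue of Theorem \ref{thm:additive}, giving a bound of the shape $2^{(\text{format})^{O(1)}}$) to this intermediate family in $\mathbb{R}^N$. Since every $\widehat{\TT}_{\limit}$ is homotopy equivalent to one of finitely many sets coming from this bounded-format family, and the number of such sets is $2^{O(k(k^2+a))^{8}}=2^{(k+a)^{O(1)}}$, we can take $\mathcal{S}_{k,a}$ to be a set of representatives; up to a further homotopy-preserving embedding we may assume each lies in $\mathbb{R}^N$ with the stated $N$. The main obstacle, I expect, is the second step: verifying that Zell's homotopy model for the Hausdorff limit can be carried out semi-algebraically with \emph{explicit control on the number of monomials} (not just degrees), and that the requisite closedness/compactness hypotheses ($\TT_t$ closed for $t>0$, $\TT$ bounded) propagate through the auxiliary-variable substitution so that the limit is genuinely captured — the division-free restriction is precisely what makes the substitution $Q_{ij}\mapsto Y_{ij}$ a homeomorphism with well-behaved closure, and tracking this through the symmetric-product construction is the technical heart of the argument.
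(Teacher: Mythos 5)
Your overall scaffolding (build a format-controlled semi-algebraic model of $\TT_{\limit}$, count its possible homotopy types via the bounds of \cite{BV06}, then pass back to the limits) is the same as the paper's, and your first step is a legitimate variant: substituting $Y_{ij}$ for the $Q_{ij}$ gives a graph map which is a homeomorphism on the compact closure $\overline{\TT}$, so $\widehat{\TT}_{\limit}$ is indeed homeomorphic to $\TT_{\limit}$, and one may work with the fewnomial family $\widehat{\TT}$. (The paper does not do this; it keeps the original formula, bounds the \emph{division-free additive} format of the model directly in Propositions \ref{prop:9} and \ref{prop:main}, and invokes Theorem \ref{thm:additive}. Your route makes the fibers live in $\mathbb{R}^{k+a+1}$, so the ambient dimension of the resulting model depends on $a$ and not only on $k$ — harmless for the count, but it means the stated $N$ would have to come from taking representatives of the limits themselves rather than of the models.)

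The genuine gap is your second step. No result of \cite{Hausdorff} asserts that the Hausdorff limit is \emph{homotopy equivalent} to a set obtained from the family by a diagonal construction and a linear projection; Zell's paper bounds Betti numbers only, and upgrading his method to a homotopy-type statement with controlled format is exactly the new content of this section of the paper, which your proposal cites as if it were available (you flag it as the ``technical heart'' but do not supply it). What one actually has is: a metrically close surjection $f_\lambda:\TT_\lambda\to\TT_{\limit}$ (Proposition \ref{prop:12}); the fibered join $\mathcal{J}^p_{f_\lambda}(\TT_\lambda)$, which is only \emph{$p$-equivalent} — not homotopy equivalent — to $\TT_{\limit}$ (Proposition \ref{prop:5}) and whose defining formula involves $f_\lambda$, hence has uncontrolled format; and the sandwiching argument (Propositions \ref{propINC} and \ref{prop:15}, using \ref{prop:7}, \ref{prop:8}, \ref{prop:13}) that replaces it, up to homotopy, by the thickened diagonal $\mathcal{D}^p_\eps(\TT_\lambda)$, whose formula no longer mentions $f_\lambda$ and whose format can be bounded. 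Because the model is only $(k+1)$-equivalent to the limit and has large dimension, your concluding inference ``every $\widehat{\TT}_{\limit}$ is homotopy equivalent to one of finitely many sets from the bounded-format family'' does not follow as stated: one needs the comparison lemma (Proposition \ref{prop:top_basic}) with $p=k+1>\dim\TT_{\limit}$ to conclude that two limits whose models are homotopy equivalent are themselves homotopy equivalent, and then $\mathcal{S}_{k,a}$ must consist of representative limits rather than the models. Without the $p$-equivalence construction and this dimension argument, the proposal assumes the theorem's key mechanism rather than proving it.
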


\subsection{Outline of the proof}
The main steps in the
proof of Theorem \ref{thm:main_weak}
are as follows.  Let $\TT \subset
\mathbb{R}^{k}\times
\mathbb{R}_{+}$
be a bounded semi-algebraic set,
such that
$\TT_t$ is closed for each $t \in \mathbb{R}$,
and let
$\TT_{\limit}$
be as in Notation \ref{not:limit}.

We first prove that for all  small enough $\lambda>0$,
there exists a semi-algebraic surjection
$f_\lambda:\TT_\lambda \rightarrow \TT_{\limit}$
which is metrically close to the identity map $1_{\TT_\lambda}$
(see Proposition \ref{prop:12} below).
Using a semi-algebraic realization of the fibered join described in
\cite{BZ09}
(see also \cite{GVZ04}),
we then
consider, for any fixed $p \geq 0$, a semi-algebraic set
$\mathcal{J}^p_{f_\lambda}(\TT_\lambda)$ which is $p$-equivalent to
$\TT_\limit$
(see Proposition \ref{prop:5}). The
definition of $\mathcal{J}^p_{f_\lambda}(\TT_\lambda)$ still involves the map
$f_\lambda$, whose definition is not simple, and hence we cannot
control the topological type of $\mathcal{J}^p_{f_\lambda}(\TT_\lambda)$
directly.
However, the fact that $f_\lambda$ is
metrically close to the identity map
enables us to adapt the main technique in \cite{Hausdorff} due to Zell. We
replace $\mathcal{J}^p_{f_\lambda}(\TT_\lambda)$ by another semi-algebraic set,
which we denote by
$\mathcal{D}^p_\eps(\TT)$ (for $\eps>0$ small enough),
which is  homotopy equivalent to
$\mathcal{J}^p_{f_\lambda}(\TT_\lambda)$, but
whose definition no longer involves the
map $f_\lambda$ (Definition \ref{defncalD}).
We can now bound the format of $\mathcal{D}^p_\eps(\TT)$
in terms of the format of the formula defining $\TT$.
This key result is summarized in Proposition  \ref{prop:main}.

We first recall the definition of $p$-equivalence
(see, for example, \cite[page 144]{tomDieck08}).

\begin{definition}[$p$-equivalence]
\label{def:p-equivalence}
A map $f: A \rightarrow B$ between two topological spaces is called a
\emph{$p$-equivalence} if the induced
map
\[
f_*: \boldsymbol{\pi}_i(A,a) \rightarrow \boldsymbol{\pi}_i(B,f(a))
\]
is,
 for each $a \in A$,
bijective
for  $0 \leq i < p$, and
surjective
for $i=p$,
and we say that $A$ is \em{$p$-equivalent} to $B$.
\end{definition}

\begin{proposition}
\label{prop:main}
Let
$\TT\subset \mathbb{R}^k\times \mathbb{R}_+$
be a bounded
semi-algebraic set such that $\TT_t$ is closed
for each  $t>0$, and let
$p \geq 0$.
Suppose also that $\TT$ is described by a formula
having
(division-free)
additive
format
bounded by
$(a,k+1)$
and dense format
$(s,d,k+1)$.
Then, there exists a semi-algebraic set
$\mathcal{D}^p\subset \mathbb{R}^N$,
$N=(p+1)(k+1)+\binom{p+1}{2}$,
 such  that $\mathcal{D}^p$ is
$p$-equivalent to
$\TT_{\limit}$ (cf. Notation \ref{not:limit})
and such that $\mathcal{D}^p$ is described by a
formula having
(division-free)
additive
 format
bounded by $(M,N)$
and dense format
$(M',d+1,N)$,
where
$M=\constM$
and $M'=\constMprime$.
\end{proposition}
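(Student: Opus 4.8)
The plan is to follow the outline given in the paper itself, constructing the semi-algebraic set $\mathcal{D}^p$ in two stages and carefully tracking the format at each step. First I would invoke Proposition~\ref{prop:12} to obtain, for all small enough $\lambda>0$, a semi-algebraic surjection $f_\lambda\colon \TT_\lambda\rightarrow \TT_{\limit}$ which is metrically close to the identity on $\TT_\lambda$. Then I would apply Proposition~\ref{prop:5}, which via the semi-algebraic fibered join construction of \cite{BZ09} produces the set $\mathcal{J}^p_{f_\lambda}(\TT_\lambda)$ lying in a space of dimension $(p+1)(k+1)+\binom{p+1}{2}$ (the $(p+1)$ copies of the ambient $\mathbb{R}^{k+1}$ — or rather the relevant $\mathbb{R}^{k}$ fibers together with the parameter — plus the $\binom{p+1}{2}$ join parameters), and which is $p$-equivalent to $\TT_{\limit}$. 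Finally, using the metric closeness of $f_\lambda$ to the identity, I would replace $\mathcal{J}^p_{f_\lambda}(\TT_\lambda)$ by the set $\mathcal{D}^p_\eps(\TT)$ of Definition~\ref{defncalD}, which for $\eps>0$ small enough is homotopy equivalent to $\mathcal{J}^p_{f_\lambda}(\TT_\lambda)$ (hence $p$-equivalent to $\TT_{\limit}$) but whose defining formula no longer mentions $f_\lambda$. Setting $\mathcal{D}^p := \mathcal{D}^p_\eps(\TT)$ for such an $\eps$ gives the required set.

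The core of the argument, and the part that needs to be spelled out in detail, is the format bookkeeping. The set $\mathcal{D}^p_\eps(\TT)$ is built from $p+1$ copies of the coordinates defining $\TT$ (each copy a block of $k+1$ variables, contributing the $(p+1)(k+1)$ term), together with $\binom{p+1}{2}$ auxiliary ``difference'' or join-parameter variables, giving the stated ambient dimension $N=(p+1)(k+1)+\binom{p+1}{2}$. Since $\TT$ is cut out by a formula with division-free additive format $(a,k+1)$ and dense format $(s,d,k+1)$, each of the $p+1$ copies contributes its own polynomials; re-expressing a division-free additive representation of length $a$ after a coordinate substitution (shifting the block of variables) preserves the length $a$, so across $p+1$ copies we accumulate additive complexity $(p+1)a$, plus the additional $2k\binom{p+1}{2}$ coming from the quadratic comparison polynomials (the $\binom{p+1}{2}$ pairwise terms, each in essentially $2k$ of the copied variables and the new parameters) needed to encode the metric/join conditions and the $\eps$-thickening. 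One should also account for the $k$ extra variables and the constant from the parameter $t$ and the $\eps$ thresholds — this is where the precise constant $M=\constM$ comes from. The dense format count $M'=\constMprime$ is obtained analogously: $p+1$ copies of the $s$ polynomials, plus $3\binom{p+1}{2}$ for the pairwise quadratic constraints and their sign variants, plus $3$ for the extra linear/threshold polynomials, with the degree rising from $d$ to $d+1$ because of the homogenizing or thickening multiplication by one extra linear factor.

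The main obstacle I expect is precisely this last step: verifying that the substitution of the map $f_\lambda$ by the metric conditions — the move from $\mathcal{J}^p_{f_\lambda}(\TT_\lambda)$ to $\mathcal{D}^p_\eps(\TT)$ — genuinely preserves the homotopy type while keeping the additive complexity under control, and that the quantifiers introduced by the join construction (the fibered join over $f_\lambda$ is naturally a quantified object) can be eliminated at no cost to the additive format. The key point, which I would take from the cited technique of Zell \cite{Hausdorff} together with Proposition~\ref{prop:12}, is that metric closeness of $f_\lambda$ to $1_{\TT_\lambda}$ lets one trade the constraint ``$y=f_\lambda(x)$'' for an open constraint ``$\lvert y-x\rvert<\eps$'' intersected with $\TT_\lambda$, which is a condition on the already-present coordinates and involves only the new quadratic distance polynomials — no new high-complexity polynomials and no surviving quantifiers. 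Once that is established, the format bounds follow by the additive arithmetic just described, and the $p$-equivalence is inherited through the chain $\mathcal{D}^p_\eps(\TT)\simeq \mathcal{J}^p_{f_\lambda}(\TT_\lambda)\xrightarrow{\ p\text{-equiv}\ }\TT_{\limit}$.
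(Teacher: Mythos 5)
Your route is the paper's own: obtain $f_\lambda$ from Proposition \ref{prop:12}, pass to the fibered join and its $p$-equivalence to $\TT_{\limit}$ via Proposition \ref{prop:5}, replace it by the thickened diagonal of Definition \ref{defncalD} using Zell's metric-closeness technique, and read off the format from the explicit defining formula (the paper's Proposition \ref{prop:9}). So the approach is the same; but two points in your write-up need correction before it is a proof. First, the set you exhibit must be $\mathcal{D}^p_{\delta}(\TT_\lambda)$ for a suitably chosen small $\lambda$, not $\mathcal{D}^p_{\eps}(\TT)$: the diagonal construction has to be applied to the fiber $\TT_\lambda\subset\mathbb{R}^k$ (whose defining formula, obtained by substituting $t=\lambda$, still has additive format $(a,k)$ and dense format $(s,d,k)$), because the comparison with $\mathcal{J}^p_{f_\lambda,\eps}(\TT_\lambda)$ is an inclusion of subsets of the same ambient space built over $\TT_\lambda$, where $f_\lambda$ lives. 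Applied to $\TT\subset\mathbb{R}^{k+1}$ it would sit in $\mathbb{R}^{(p+1)(k+2)+\binom{p+1}{2}}$ and would not interface with that argument. Relatedly, your variable count is misattributed: $N=(p+1)(k+1)+\binom{p+1}{2}$ arises as $(p+1)k$ copy variables plus $(p+1)$ simplex coordinates $T_i$ plus $\binom{p+1}{2}$ variables $A_{ij}$, not as $p+1$ copies of the $k+1$ coordinates of $\TT$; this matters for reproducing $M$ and $M'$ honestly rather than reverse-engineering them.

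Second, the passage from $\mathcal{J}^p_{f_\lambda}(\TT_\lambda)$ to the thickened diagonal is not a direct trade of ``$y=f_\lambda(x)$'' for ``$|y-x|<\eps$''. The mechanism is a sandwich: Lemma \ref{lem:7} gives
$\mathcal{J}^p_{f_\lambda,\eps}(\TT_\lambda)\subseteq \mathcal{D}^p_{\eps+\eta_p(\lambda)}(\TT_\lambda)\subseteq \mathcal{J}^p_{f_\lambda,\eps+2\eta_p(\lambda)}(\TT_\lambda)$,
and one needs homotopy stability of both families in the thickening parameter (Propositions \ref{prop:7}, \ref{prop:8} and \ref{prop:13}) to choose $\lambda,\eps,\delta$ so that the middle inclusion induces isomorphisms on all homotopy groups, followed by the CW/Whitehead step upgrading the weak equivalence to a homotopy equivalence (Proposition \ref{prop:15}). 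You correctly identify this as the main obstacle and defer it to Zell's technique, which is indeed what the paper adapts, but as written this central step is asserted rather than proved; also note there is no quantifier-elimination issue to worry about, since $\mathcal{J}^p_{f_\lambda}$ is already given by a quantifier-free formula — the problem is solely that $f_\lambda$ has no controlled description, which the diagonal removes.
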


Finally, Theorem \ref{thm:main_weak} is an easy consequence of
Proposition \ref{prop:main}.

\subsection{Preliminaries}
We need a few facts from the homotopy theory of
finite CW-complexes.

We first prove a basic result about $p$-equivalences (Definition \ref{def:p-equivalence}).
It is clear that $p$-equivalence is not an equivalence relation
(e.g., for any $p\geq 0$,
the map taking $\Sphere^p$ to a point is a $p$-equivalence,
but no map from a point into $\Sphere^p$ is one).
However, we have the following.

\begin{proposition}
\label{prop:top_basic}
Let $A,B,C$ be
finite CW-complexes
with $\dim(A),\dim(B) \leq k$ and
suppose that $C$ is $p$-equivalent to $A$ and $B$ for some $p >k$.
Then, $A$ and $B$ are homotopy equivalent.
\end{proposition}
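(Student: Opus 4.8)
The plan is to use the obstruction-theoretic fact that a $p$-equivalence between CW-complexes induces, on complexes of dimension $\leq k < p$, a genuine homotopy equivalence — more precisely, that it admits a "homotopy section" on the low-dimensional pieces. Concretely, I would first invoke the standard lemma (a consequence of the relative Hurewicz/Whitehead machinery, see e.g. \cite{tomDieck08}): if $g\colon C\to A$ is a $p$-equivalence and $\dim(A)\le k<p$, then there is a map $h\colon A\to C$ with $g\circ h\simeq 1_A$. This is proved by building $h$ cell-by-cell over the skeleta of $A$: the condition that $g_*$ is an isomorphism on $\boldsymbol{\pi}_i$ for $i<p$ and surjective for $i=p$ is exactly what is needed to extend the partial lift over each new cell of dimension $\le k$, and the same bijectivity guarantees the resulting composite is homotopic to the identity. (One should be slightly careful: since $A$ need not be simply connected, these statements are about the action on $\boldsymbol{\pi}_i$ based at every basepoint, which is precisely how Definition \ref{def:p-equivalence} is phrased, so the argument goes through on each path component.)

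With that lemma in hand, the argument is short. Since $C$ is $p$-equivalent to $A$, pick $g\colon C\to A$ realizing this; since $C$ is $p$-equivalent to $B$, pick $g'\colon C\to B$. By the lemma applied to $g$ (using $\dim A\le k<p$) there is $h\colon A\to C$ with $g\circ h\simeq 1_A$. I claim the composite $g'\circ h\colon A\to B$ is a homotopy equivalence. First, $g'\circ h$ is itself a $p$-equivalence: on $\boldsymbol{\pi}_i$ for $i\le p$ we have $(g'\circ h)_* = g'_*\circ h_*$, and from $g\circ h\simeq 1_A$ we get $g_*\circ h_* = \mathrm{id}$ on $\boldsymbol{\pi}_i$, so $h_*$ is injective for $i\le p$; since $g_*$ is bijective for $i<p$ this forces $h_*$ to be bijective for $i<p$ (and surjective onto the relevant group for $i=p$ by a diagram chase using surjectivity of $g_*$ in degree $p$), and composing with the $p$-equivalence $g'_*$ keeps it a $p$-equivalence. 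But a $p$-equivalence between CW-complexes each of dimension $\le k<p$ (here $A$ has $\dim\le k$, and $B$ has $\dim\le k$) is a weak homotopy equivalence: there are no homotopy groups in degrees $\ge p>k\ge \dim$, and in degrees $<p$ it is already an isomorphism — one must check surjectivity and injectivity in the top relevant degrees, which again follows from the $p$-equivalence conditions combined with $p>k$. By Whitehead's theorem, a weak homotopy equivalence between CW-complexes is a homotopy equivalence, so $g'\circ h\colon A\to B$ is a homotopy equivalence, proving the proposition.

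The main obstacle I anticipate is the non-simply-connected bookkeeping: Whitehead's theorem and the cellular-extension lemma both need care when $\pi_1\neq 0$, since "$p$-equivalence" must be read as bijectivity of $\boldsymbol{\pi}_i$ at \emph{all} basepoints (and, implicitly, a bijection on $\pi_0$ for the statement to be non-vacuous — so one should either assume connectedness or argue componentwise, noting that $C$ being $p$-equivalent to both $A$ and $B$ with $p\ge 1$ forces a bijection of components). Granting the standard statements of these theorems in the form given in \cite{tomDieck08}, the rest is a routine diagram chase; the only genuinely substantive input is the construction of the homotopy section $h$ by induction over skeleta, and even that is textbook obstruction theory once one knows $\dim A < p$.
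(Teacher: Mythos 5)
Your overall architecture is the same as the paper's: produce a homotopy section $h\colon A\to C$ of the $p$-equivalence $C\to A$ (the paper gets this from the surjectivity of $[A,C]\to[A,A]$ for $\dim A\le p$, which is exactly your cell-by-cell lifting argument), deduce that $h_*$ is bijective on $\boldsymbol{\pi}_i$ for $i<p$, and then conclude that the composite $A\to C\to B$ is a homotopy equivalence using the dimension hypothesis. Up to that last step your argument is fine.

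The last step, however, contains a genuine error. You justify that $g'\circ h$ is a weak homotopy equivalence by asserting that ``there are no homotopy groups in degrees $\ge p>k\ge\dim$.'' This is false: a finite CW-complex of dimension $k$ typically has nonvanishing homotopy groups in arbitrarily high degrees (e.g.\ $\boldsymbol{\pi}_3(\Sphere^2)\neq 0$), so you have no control over $(g'\circ h)_*$ in degrees $\ge p$ and cannot conclude it is a weak homotopy equivalence, hence cannot invoke Whitehead's theorem in the weak-equivalence form. (Your auxiliary claim that $h_*$ is surjective on $\boldsymbol{\pi}_p$ also does not follow from $g_*h_*=\mathrm{id}$ plus surjectivity of $g_*$; a split injection need not be onto.) What is actually needed is the finer statement the paper cites from \cite{Viro-homotopy} (Lemma \ref{lem:top_basic2}): a $p$-equivalence between finite CW-complexes with $\dim A<p$ and $\dim B\le p$ is already a homotopy equivalence. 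Its proof is again obstruction-theoretic — maps out of complexes of dimension $<p$ do not detect homotopy in degrees $\ge p$ — and does not rest on any vanishing of homotopy groups. Replacing your vanishing claim by a citation of that lemma (applied to the composite, which is bijective on $\boldsymbol{\pi}_i$ for $i<p$) repairs the proof and makes it essentially identical to the paper's.
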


The proof of Proposition \ref{prop:top_basic} will rely on the
following well-known lemmas.

\begin{lemma}\cite[page 182, Theorem 7.16]{Whitehead}
\label{lem:top_basic1}
Let $X,Y$ be CW-complexes  and $f:X\rightarrow Y$ a $p$-equivalence. Then,
for each CW-complex $M$, $\dim(M) \leq p$,
the induced map
\[
f_*: [M,X] \rightarrow [M,Y]
\]
is surjective.
\end{lemma}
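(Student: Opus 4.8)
The plan is to prove this by the standard obstruction‑theoretic compression argument (this is essentially the proof of \cite[Theorem~7.16]{Whitehead}). First I would reduce to the case where $f$ is the inclusion of a subcomplex. Replacing $f$ by a cellular map via cellular approximation and $Y$ by the mapping cylinder $Z_f$, one obtains a subcomplex inclusion $\iota\colon X\hookrightarrow Z_f$, a cellular homotopy equivalence $r\colon Z_f\to Y$ with $r\circ\iota=f$, and an induced bijection $r_*\colon[M,Z_f]\xrightarrow{\sim}[M,Y]$ carrying $\iota_*$ to $f_*$; moreover $\iota$ is again a $p$-equivalence since $r$ is a homotopy equivalence. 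So it suffices to show: if $\iota\colon X\hookrightarrow Y$ is a subcomplex inclusion and a $p$-equivalence, then every $g\colon M\to Y$ with $\dim M\le p$ is homotopic to a map with image in $X$. From the long exact homotopy sequence of the pair $(Y,X)$ and the definition of $p$-equivalence one reads off that $\boldsymbol{\pi}_i(Y,X,x)=0$ for every $x\in X$ and every $i$ with $1\le i\le p$, and that $\pi_0(X)\to\pi_0(Y)$ is surjective.

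Given such a $g$, I would push it into $X$ by induction over the skeleta $M^{(n)}$, $0\le n\le\dim M$. On $M^{(0)}$ one uses surjectivity of $\pi_0(X)\to\pi_0(Y)$ to homotope $g$ so that $g(M^{(0)})\subset X$. Inductively, suppose $1\le n\le p$ and $g(M^{(n-1)})\subset X$. For each $n$-cell $e$ of $M$ with characteristic map $\phi_e\colon(D^n,S^{n-1})\to(M,M^{(n-1)})$, the composite $g\circ\phi_e$ is a map of pairs $(D^n,S^{n-1})\to(Y,X)$, hence represents a class in $\boldsymbol{\pi}_n(Y,X)=0$; by the standard compression criterion $g\circ\phi_e$ is therefore homotopic rel $S^{n-1}$ to a map into $X$. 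Since these homotopies are stationary on the attaching spheres, they agree on overlaps and assemble to a single homotopy of $g|_{M^{(n)}}$ rel $M^{(n-1)}$; extending it over $M$ via the homotopy extension property of the cofibration $M^{(n)}\hookrightarrow M$ replaces $g$ by a homotopic map with $g(M^{(n)})\subset X$. After $\dim M\le p$ steps one obtains $h\colon M\to X$ with $\iota\circ h\simeq g$, and transporting back through $r_*$ gives $f_*[h]=[g]$. Hence $f_*\colon[M,X]\to[M,Y]$ is surjective.

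The routine ingredients are the mapping‑cylinder reduction and the extraction of $\boldsymbol{\pi}_i(Y,X)=0$ from the pair sequence. The step that I would write out most carefully — and the main obstacle to a clean argument — is the simultaneous compression of all $n$-cells: one must verify that the per‑cell homotopies, taken rel the attaching spheres, glue to a well‑defined homotopy of $g$ restricted to the whole $n$-skeleton before HEP is invoked, which is where disconnectedness of $M$, possibly infinitely many cells, and the basepoint bookkeeping (in particular the degenerate case $p=0$, where only the $0$-skeleton step occurs) all have to be dealt with. Once that is in place, the induction and the final transport through the mapping‑cylinder equivalence are immediate.
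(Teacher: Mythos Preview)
Your argument is correct and is essentially the standard compression proof of this result. Note, however, that the paper does not give its own proof of this lemma: it is quoted as \cite[page~182, Theorem~7.16]{Whitehead} and used as a black box in the proof of Proposition~\ref{prop:top_basic}. So there is nothing to compare against beyond observing that what you have written is exactly the textbook argument being cited.
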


\begin{lemma}\cite[page 69]{Viro-homotopy}
\label{lem:top_basic2}
If $A$ and $B$ are finite
CW-complexes, with $dim(A) < p$ and $dim(B) \leq p$, then
every $p$-equivalence from $A$ to $B$ is a homotopy equivalence.
\end{lemma}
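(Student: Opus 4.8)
The plan is to manufacture a homotopy equivalence $A\to B$ directly out of the two given $p$-equivalences, rather than comparing $A$ and $B$ through intermediate invariants. Write $g\colon C\to A$ and $h\colon C\to B$ for the given $p$-equivalences; by hypothesis $\dim A,\dim B\le k<p$. First I would use Lemma \ref{lem:top_basic1} to produce homotopy sections of $g$ and $h$: applying it to the $p$-equivalence $g$ with $M=A$ (legitimate since $\dim A\le k\le p$) shows $g_*\colon [A,C]\to [A,A]$ is surjective, so there is $u\colon A\to C$ with $g\circ u\simeq \mathrm{id}_A$; symmetrically, applying it to $h$ with $M=B$ gives $v\colon B\to C$ with $h\circ v\simeq \mathrm{id}_B$. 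Set $\alpha:=h\circ u\colon A\to B$ and $\beta:=g\circ v\colon B\to A$; these will be the desired mutually inverse equivalences.

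The heart of the argument is to check $\beta\circ\alpha\simeq \mathrm{id}_A$ and $\alpha\circ\beta\simeq \mathrm{id}_B$. Since $\beta\alpha=g\circ(v\circ h\circ u)$ while $\mathrm{id}_A\simeq g\circ u$, it suffices to prove $v h u\simeq u$ in $[A,C]$. Composing both maps with $h$ and using $h\circ v\simeq \mathrm{id}_B$ gives $h\circ(v h u)\simeq (h v)(h u)\simeq h u=h\circ u$; that is, $vhu$ and $u$ have the same image under $h_*\colon [A,C]\to [A,B]$. Now $\dim A<p$ and $h$ is a $p$-equivalence, so $h_*$ is not merely surjective but \emph{bijective} on $[A,C]$ — this is the full Whitehead comparison theorem, of which Lemma \ref{lem:top_basic1} records only the surjectivity half — whence $vhu\simeq u$ and therefore $\beta\alpha\simeq \mathrm{id}_A$. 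The identical computation with the roles of $g,h$ and of $A,B$ interchanged yields $\alpha\beta\simeq \mathrm{id}_B$, so $\alpha\colon A\to B$ is a homotopy equivalence and $A$ and $B$ are homotopy equivalent. (A minor bookkeeping point: $A,B,C$ need not be connected, so everything should be read componentwise and with basepoints; this is harmless, since $p$-equivalence in the sense of Definition \ref{def:p-equivalence} and Lemmas \ref{lem:top_basic1} and \ref{lem:top_basic2} are all formulated for every basepoint.)

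An alternative route, staying closer to the two lemmas as literally stated, is to observe that from $g_*u_*=\mathrm{id}$, $h_*v_*=\mathrm{id}$ together with $g_*,h_*$ being isomorphisms on $\boldsymbol{\pi}_i$ for $i<p$ one gets $u_*=g_*^{-1}$, $v_*=h_*^{-1}$ in that range, hence $(\beta\alpha)_*=g_*h_*^{-1}h_*g_*^{-1}=\mathrm{id}$ on $\boldsymbol{\pi}_i(A)$ and $(\alpha\beta)_*=\mathrm{id}$ on $\boldsymbol{\pi}_i(B)$ for all $i<p$; one then applies Lemma \ref{lem:top_basic2} to the self-maps $\beta\alpha$ and $\alpha\beta$ (whose source dimensions are $<p$) to conclude they are homotopy equivalences, and finishes with the elementary fact that a map possessing a homotopy left inverse and a homotopy right inverse is a homotopy equivalence. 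Either way, the one step that deserves care — and which I expect to be the main obstacle — is promoting "isomorphism on $\boldsymbol{\pi}_i$ for $i<p$" to "$p$-equivalence" for these self-maps in the borderline case $\dim A=p-1$ (or $\dim B=p-1$): there one also needs surjectivity on the top group $\boldsymbol{\pi}_p$, which is exactly what the sharper (bijectivity) form of Whitehead's theorem supplies, and this is why the first route is the cleaner one to write down. The remaining ingredients — the homotopy-group computations in degrees $<p$ and the section constructions — are routine.
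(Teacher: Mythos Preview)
Your proposal does not prove Lemma~\ref{lem:top_basic2} at all: it proves Proposition~\ref{prop:top_basic}. The lemma you were asked to prove concerns a \emph{single} map $f\colon A\to B$ between two finite CW-complexes with $\dim A<p$ and $\dim B\le p$, and asserts that if $f$ is a $p$-equivalence then $f$ is a homotopy equivalence. There is no auxiliary space $C$ in the statement, and there is only one map, not two. Your opening sentence (``Write $g\colon C\to A$ and $h\colon C\to B$ for the given $p$-equivalences; by hypothesis $\dim A,\dim B\le k<p$'') quotes verbatim the hypotheses of Proposition~\ref{prop:top_basic}, and everything that follows is an argument for that proposition.

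Worse, your ``alternative route'' explicitly \emph{invokes} Lemma~\ref{lem:top_basic2} (``one then applies Lemma~\ref{lem:top_basic2} to the self-maps $\beta\alpha$ and $\alpha\beta$''), so read as a proof of Lemma~\ref{lem:top_basic2} it is circular. Note also that the paper does not supply its own proof of Lemma~\ref{lem:top_basic2}; it is quoted from \cite{Viro-homotopy} and used as a black box. A genuine proof of the lemma would proceed, for instance, by showing that $f_*\colon \boldsymbol{\pi}_i(A)\to\boldsymbol{\pi}_i(B)$ is an isomorphism for all $i$ (the only nontrivial case being $i=p$, where $\dim A<p$ forces $\boldsymbol{\pi}_p(A)$ to be generated by the images of attaching maps and one uses the surjectivity hypothesis together with cellular approximation), and then appealing to Whitehead's theorem. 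If instead you meant to offer an alternative proof of Proposition~\ref{prop:top_basic}, say so; as such it is a reasonable variant of the paper's argument, trading the paper's single use of Lemma~\ref{lem:top_basic2} for the bijectivity half of Whitehead's comparison theorem on $[A,C]$.
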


\proof[Proof of Proposition \ref{prop:top_basic}]
Suppose $f:C\rightarrow A$ and $g:C \rightarrow B$ are two $p$-equivalences.
Applying  Lemma \ref{lem:top_basic1} with $X=C$, $M=Y=A$, we have
that the homotopy class of the identity map
$1_A$ has a preimage, $[h]$,  under $f_*$,
for some $h \in [A,C]$.
Then, for each $ a \in A$, and $i \geq 0$,
\[
f_*\circ h_* : \boldsymbol{\pi}_i(A,a) \rightarrow \boldsymbol{\pi}_i(A, f\circ h(a)),
\]
is bijective.
In particular, since $f$ is a $p$-equivalence,
this implies that $h_*: \boldsymbol{\pi}_i(A,a) \rightarrow \boldsymbol{\pi}_i(C,h(a))$ is
bijective
for $0 \leq i < p$. Composing $h$ with $g$, and noting that $g$ is
also a $p$-equivalence we get that the map
$(g \circ h)_*:\boldsymbol{\pi}_i(A,a) \rightarrow
\boldsymbol{\pi}_i(B, g\circ h (a))$ is bijective
for $0 \leq i < p$.
Now, applying Lemma \ref{lem:top_basic2} we get that $g \circ h$ is a
homotopy equivalence.
\zz

We introduce some more notation.

\begin{notation}
For any
$R\in \mathbb{R}_+$,
we denote by $B_k(0,R) \subset \mathbb{R}^k$,
the open ball of radius $R$ centered at the origin.
\end{notation}

\begin{notation}
For
$P \in \mathbb{R}[X_1,\ldots,X_k]$,
we denote by $\Zer(P,\mathbb{R}^k)$ the real
algebraic set defined by $P=0$.
\end{notation}

\begin{notation}{\label{not:reali}}
For any first order formula $\Phi$
with $k$ free variables, we denote by $\Reali(\Phi)$ the semi-algebraic
subset of $\mathbb{R}^k$ defined by $\Phi$.
\end{notation}

A very important construction that we use later in the paper is an efficient
semi-algebraic realization (up to homotopy) of the iterated
fibered join of a semi-algebraic set over a semi-algebraic map.
This construction was  introduced in \cite{BZ09}.

\subsection{Topological definitions}
We first recall the basic definition of the
the iterated join of a topological space.

\begin{notation}
\label{not:simplex}
For each
$p \geq 0$,
we denote
\[
\Delta_{[0,p]} = \{\textbf{t} = (t_0,\ldots,t_p)\mid t_i \geq  0, 0 \leq i \leq p, \sum_{i=0}^p
t_i = 1,
\}
\]
the standard $p$-simplex.
For each subset $I = \{i_0,\ldots,i_m\}, 0 \leq i_0 < \cdots < i_m \leq p$,
let $\Delta_I \subset \Delta_{[0,p]}$ denote the face
$$\Delta_I = \{ \textbf{t} = (t_0,\ldots,t_p)
\in \Delta_{[0,p]}\; \mid \; t_i = 0 \mbox{ for all } i \not\in I \}
$$
of $\Delta_{[0,p]}$.
\end{notation}

\begin{definition}
\label{def:pfoldjoin}
For
$p \geq 0$,
the $(p+1)$-fold join
$J^p(X)$ of a topological space
$X$ is
\begin{equation}
\label{eqn:definitionofjoin}
J^p(X) \defeq \underbrace{X\times\cdots\times X}_{(p+1)\mbox{ times }}
\times \Delta_{[0,p]}/\sim,
\end{equation}
where
\[
(x_0,\ldots,x_p,t_0,\ldots,t_p) \sim (x_0',\ldots,x_p',t_0,\ldots,t_p)
\]
if for each $i$ with $t_i \neq 0$, $x_i = x_i'$.
\end{definition}

In the special situation
when $X$ is a semi-algebraic set,
the space $J^p(X)$ defined above is not immediately
a semi-algebraic set, because of taking quotients.
We now define a semi-algebraic set,
$\mathcal{J}^p(X)$, that is
homotopy equivalent
to $J^p(X)$.

Let $\Delta'_{[0,p]} \subset \mathbb{R}^{p+1}$ denote the set defined by
$$
\Delta'_{[0,p]} = \{ \textbf{t} = (t_0,\ldots,t_p) \in \mathbb{R}^{p+1} \;\mid\; \sum_{0 \leq i \leq p} t_i = 1 , |\textbf{t}|^2 \leq 
1\}.
$$

For each subset $I = \{i_0,\ldots,i_m\}, 0 \leq i_0 < \cdots < i_m \leq p$,
let $\Delta'_I \subset \Delta'_{[0,p]}$ denote
$$
\Delta'_I = \{ \textbf{t} = (t_0,\ldots,t_p)
\in \Delta'_{[0,p]}\; \mid \; t_i = 0 \mbox{ for all } i \not\in I \}.
$$

It is clear that the standard simplex
$\Delta_{[0,p]}$
is a deformation retract of $\Delta'_{[0,p]}$
via a deformation retraction,
$\rho_p: \Delta'_{[0,p]} \rightarrow \Delta_{[0,p]}$,
that restricts to a deformation retraction
$\rho_p|_{\Delta'_I}:\Delta'_I\rightarrow \Delta_I$ for each $I \subset [0,p]$.

We use the lower case bold-face notation $\mathcal{\textbf{x}}$ to denote a
point
$\mathcal{\textbf{x}}=(x_1,\dots,x_k)$ of $\mathbb{R}^k$
and upper-case $\X=(X_1,\dots,X_k)$
to denote a \emph{block of variables}.
In the following definition the role of the
$\binom{p+1}{2}$
variables
$(A_{ij})_{0 \leq i < j \leq p}$ can be safely ignored, since they are
all set to $0$. Their significance will be clear later.

\begin{definition}[The semi-algebraic join \cite{BZ09}%
    \label{def:semi-algebraic-join}]
For a semi-algebraic subset $X \subset \mathbb{R}^k$
contained in $B_k(0,R)$,
defined by a $\mathcal{P}$-formula
$\Phi$,
we define
$$
\begin{aligned}
\mathcal{J}^p(X)=&\{(\mathcal{\textbf{x}}^0,\dots,\mathcal{\textbf{x}}^p,
  \textbf{t},\textbf{a})\in  \mathbb{R}^{(p+1)(k+1)+\binom{p+1}{2}}|\\ &\quad
  \Omega^R(\mathcal{\textbf{x}}^0,\dots,\mathcal{\textbf{x}}^p,\textbf{t}) \wedge
  \Theta_1(\textbf{t},\textbf{a}) \wedge
  \Theta_2^\Phi (\mathcal{\textbf{x}}^0,\dots,\mathcal{\textbf{x}}^p,\textbf{t})
\},
\end{aligned}
$$
where
\begin{equation}
{\label{eqn:semi-algebraic-join}}
\begin{aligned}
\Omega^R \ :=\ & \quad \bigwedge_{i=0}^p (|\X^i|^2\leq R^2) \wedge |\T|^2\leq
1, \\
\Theta_1 \ :=\ & \quad \sum_{i=0}^p T_i=1 \wedge
\sum_{0 \leq i < j \leq p} A_{i j}^2=0, \\
\Theta_2^\Phi\ :=\ & \quad \bigwedge_{i=0}^p (T_i=0 \vee  \Phi(\X^i)), \\
\end{aligned}\end{equation}
We denote the formula $\Omega^R\wedge \Theta_1\wedge \Theta_2^\Phi$
by $\mathcal{J}^p(\Phi)$.
\end{definition}

It is checked easily from Definition \ref{def:semi-algebraic-join} that
$$
\displaylines{
\mathcal{J}^p(X) \subset \left(\overline{B_{k}(0,R)}\right)^{p+1} \times \Delta'_{[0,p]} \times
\{\textbf{0}\},
}
$$
and that the deformation retraction
$\rho_p: \Delta'_{[0,p]} \rightarrow \Delta_{[0,p]}$
extends to a deformation retraction,
$\tilde{\rho}_p: \mathcal{J}^p(X) \rightarrow \tilde{\mathcal{J}}^p(X)$, where
$\tilde{\mathcal{J}}^p(X)$ is defined by
$$
\displaylines{
\tilde{\mathcal{J}}^p(X)=
\{
(\mathcal{\textbf{x}}^0,\dots,\mathcal{\textbf{x}}^p,
  \textbf{t},\textbf{a})\in \left(\overline{B_{k}(0,R)}\right)^{p+1} \times \Delta_{[0,p]} \times
\{\textbf{0}\} \;\mid\;
   \Theta_2^\Phi (\mathcal{\textbf{x}}^0,\dots,\mathcal{\textbf{x}}^p,\textbf{t})
\}.
}
$$
Finally, it is a consequence of the Vietoris-Beagle theorem
(see \cite[Theorem 2]{BWW06})
that
$\tilde{\mathcal{J}}^p(X)$ and $J^p(X)$ are homotopy equivalent.
We thus have, using notation introduced
above, that
\begin{proposition}
$\mathcal{J}^p(X)$ is homotopy equivalent to $J^p(X)$.
\end{proposition}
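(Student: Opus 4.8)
The plan is to concatenate the two homotopy equivalences that the discussion immediately preceding the statement has already isolated, so the argument is essentially a one‑line composition. First I would check the (easy) inclusion $\tilde{\mathcal{J}}^p(X)\subseteq\mathcal{J}^p(X)$: a point of $\tilde{\mathcal{J}}^p(X)$ has $\textbf{a}=\textbf{0}$, has $\textbf{t}\in\Delta_{[0,p]}\subset\Delta'_{[0,p]}$ and hence satisfies $\Theta_1$ as well as $\Omega^R$ (since $\sum_i t_i^2\le(\sum_i t_i)^2=1$ and $|\mathbf{x}^i|\le R$), and satisfies $\Theta_2^\Phi$ by definition. Next, the map $\tilde\rho_p\colon\mathcal{J}^p(X)\to\tilde{\mathcal{J}}^p(X)$ constructed above — apply the deformation retraction $\rho_p$ in the $\textbf{t}$‑coordinates, fix the $\mathbf{x}^i$ and $\textbf{a}$ — is a deformation retraction, so $\mathcal{J}^p(X)$ and $\tilde{\mathcal{J}}^p(X)$ are homotopy equivalent. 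Second, I would consider the tautological surjection $q\colon\tilde{\mathcal{J}}^p(X)\to J^p(X)$ sending $(\mathbf{x}^0,\dots,\mathbf{x}^p,\textbf{t},\textbf{0})$ to the class of $(\mathbf{x}^0,\dots,\mathbf{x}^p,\textbf{t})$: every point of $J^p(X)$ lies in the relative interior of a unique face $\Delta_I$, and over such a point the point‑inverse of $q$ is the product $\prod_{i\notin I}\overline{B_k(0,R)}$ of closed balls, which is contractible. By the Vietoris–Begle theorem in the form of \cite[Theorem 2]{BWW06}, $q$ is therefore a homotopy equivalence, and composing gives $\mathcal{J}^p(X)\simeq\tilde{\mathcal{J}}^p(X)\simeq J^p(X)$, which is the assertion.

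Two places deserve a careful look. For the first equivalence I must verify that the homotopy $\tilde\rho_{p,s}$ remains inside $\mathcal{J}^p(X)$ for every $s\in[0,1]$: the conditions $\Omega^R$ and $\Theta_1$ are immediate because the $\mathbf{x}^i$ never move and $\rho_{p,s}$ keeps $\textbf{t}$ on the affine hyperplane $\sum_i T_i=1$ and inside the unit ball, but the invariance of $\Theta_2^\Phi$ relies on the face‑compatibility of $\rho_p$ recorded above, namely $\rho_{p,s}(\Delta'_I)\subseteq\Delta'_I$ for each $I\subseteq[0,p]$; this forces any coordinate $t_i$ that vanishes to remain zero throughout, so the disjunction $T_i=0\vee\Phi(\X^i)$ that held at time $0$ persists. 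For the second equivalence the work is routine bookkeeping: that $q$ is well defined and continuous — when $t_i=0$ the coordinate $\mathbf{x}^i$ may lie in $\overline{B_k(0,R)}\setminus X$, but it is identified away in $J^p(X)$, so it may be sent to a fixed point of $X$ without affecting the class, and if $X=\emptyset$ both spaces are empty — and that $\tilde{\mathcal{J}}^p(X)$ and $J^p(X)$ belong to the class of spaces, triangulable semi‑algebraic sets and joins thereof, for which \cite{BWW06} delivers a genuine homotopy equivalence rather than only an isomorphism on \v{C}ech cohomology.

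The hard part, to the extent there is one, is the verification that $\Theta_2^\Phi$ is preserved under $\tilde\rho_{p,s}$, which reduces to the face‑preserving property of the simplicial deformation retraction $\rho_p\colon\Delta'_{[0,p]}\to\Delta_{[0,p]}$ already asserted above; everything else is either forced by the convexity of $\Delta_{[0,p]}$ and $\Delta'_{[0,p]}$ or quoted directly from \cite{BZ09} and \cite{BWW06}.
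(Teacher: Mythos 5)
Your argument is exactly the paper's: retract $\mathcal{J}^p(X)$ onto $\tilde{\mathcal{J}}^p(X)$ via the face-compatible deformation retraction $\rho_p$ extended trivially in the $\mathbf{x}^i$ and $\textbf{a}$ coordinates, then identify $\tilde{\mathcal{J}}^p(X)$ with $J^p(X)$ up to homotopy by applying the Vietoris--Begle theorem of \cite[Theorem 2]{BWW06} to the natural surjection with contractible point-inverses. You simply spell out the verifications (preservation of $\Theta_2^\Phi$ under the retraction, contractibility of the fibers) that the paper leaves implicit, so the proposal is correct and follows the same route.
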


\begin{remark}
The necessity of defining $\mathcal{J}^p(X)$ instead of just
$\tilde{\mathcal{J}}^p(X)$ has to do with removing the inequalities
defining the standard simplex from the defining formula
$\mathcal{J}^p(\Phi)$, and this will
simplify certain arguments later in the paper.
\end{remark}

We now generalize the above constructions and define joins over maps (the
topological and semi-algebraic joins defined above are special cases when the
map is a constant map to a point).

\begin{notationdefinition}
\label{notdef:fiberproduct}
\emph{
Let $f:A \rightarrow B$ be a map
between topological spaces $A$ and $B$.
For each $p \geq 0$,
we denote by $W_f^p(A)$ the
\emph{$(p+1)$-fold fiber product} of $A$ over $f$.
In other words
\[
W_f^p(A) = \{(x_0,\ldots,x_p) \in A^{p+1}
\mid
f(x_0) = \cdots = f(x_p)
\}.
\]}
\end{notationdefinition}

\begin{definition}[Topological join over a map]
\label{def:joinoveramap1}
Let $f:X \rightarrow Y$ be a map
between topological spaces $X$ and $Y$.
For
$p \geq 0$,
the
\emph{$(p+1)$-fold join}
$J^p_f(X)$  of $X$ over $f$
is
\begin{equation}
\label{eqn:definitionofjoin1}
J^p_f(X) \defeq W_f^p(X) \times \Delta^p/\sim,
\end{equation}
where
\[
(x_0,\ldots,x_p,t_0,\ldots,t_p) \sim (x_0',\ldots,x_p',t_0,\ldots,t_p)
\]
if for each $i$ with $t_i \neq 0$, $x_i = x_i'$.
\end{definition}

In the special situation
when $f$ is a semi-algebraic continuous map,
the space $J^p_f(X)$ defined above is (as before) not immediately
a semi-algebraic set, because of taking quotients.
Our next goal is to obtain a semi-algebraic set,
$\mathcal{J}^p_{f}(X)$ which is homotopy equivalent
to $J^p_f(X)$ similar to the case of the ordinary join.

\newcommand{\thetafour}{\bigwedge_{0\leq i<j \leq p } (T_i= 0 \vee T_j = 0 \vee
|f(\X^i)-f(\X^j)|^2=A_{ij})}

\begin{definition}[The semi-algebraic fibered join \cite{BZ09}%
    \label{defnJP}]
For a semi-algebraic subset $ X\subset \mathbb{R}^k$
contained in $B_k(0,R)$,
defined by a $\mathcal{P}$-formula
$\Phi$ and $f:X\to Y$ a semi-algebraic map, we define
$$
  \begin{aligned}\mathcal{J}^p_{f}(X)=&\{
(\textbf{x}^0,\dots,\textbf{x}^p,
  \textbf{t},\textbf{a})\in  \mathbb{R}^{(p+1)(k+1)+\binom{p+1}{2}}|\\ &\quad
  \Omega^R(\mathcal{\textbf{x}}^0,\dots,\mathcal{\textbf{x}}^p,\textbf{t}) \wedge
  \Theta_1(\textbf{t},\textbf{a}) \wedge
  \Theta_2^\Phi (\mathcal{\textbf{x}}^0,\dots,\mathcal{\textbf{x}}^p,\textbf{t})\wedge
  \Theta_3^f
  (\mathcal{\textbf{x}}^0,\dots,\mathcal{\textbf{x}}^p,\textbf{t},\textbf{a})
\},
\end{aligned}$$ where
$\Omega^R, \Theta_1,
\Theta_2^\Phi$ have been defined previously, and
\begin{equation}{\label{eqn:fibjoin}}\begin{aligned}
\Theta_3^f \ := \  & \thetafour.
\end{aligned}\end{equation}
We denote the formula $\Omega^R\wedge \Theta_1\wedge \Theta_2^\Phi \wedge
\Theta_3^f$ by $\mathcal{J}^p_f(\Phi)$.
\end{definition}

Observe that there exists a natural map,
$J^p(f): \mathcal{J}^p_f(X) \rightarrow Y$, which maps a point
$(\textbf{x}^0,\dots,\textbf{x}^p, \textbf{t},\textbf{0}) \in
\mathcal{J}^p_f(X)$ to $f(\textbf{x}^i)$ (where $i$ is such that $t_i \neq 0$).
It is easy to see that for each $\textbf{y} \in Y$,
$J^p(f)^{-1}(\textbf{y}) = \mathcal{J}^p(f^{-1}(\textbf{y}))$.

The following proposition
follows from the above observation and the generalized
Vietoris-Begle theorem
(see \cite[Theorem 2]{BWW06})
and is important in the proof of Proposition \ref{prop:main};
it relates up to $p$-equivalence
the semi-algebraic set $\mathcal{J}^p_f(X)$ to the
image of a closed, continuous semi-algebraic surjection $f:X\to Y$.
Its proof
is similar to the proof of
Theorem  2.12 proved in \cite{BZ09} and is omitted.

\begin{proposition}
{\label{prop:5}}\cite{BZ09}
Let $f:X \to Y$ a closed, continuous
semi-algebraic
surjection
with
$X \subset B_k(0,R)$
a closed
semi-algebraic set.  Then,
for  every $p\geq 0$,
the map $J^p(f):  \mathcal{J}^p_f(X) \to Y$ is a $p$-equivalence.
\end{proposition}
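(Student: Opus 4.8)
The plan is to apply the generalized Vietoris--Begle theorem \cite[Theorem 2]{BWW06} to the natural map $J^p(f)\colon \mathcal{J}^p_f(X)\to Y$, following the scheme of the proof of \cite[Theorem 2.12]{BZ09}. Concretely, I would establish two facts: (i) $J^p(f)$ is a well-defined, continuous, closed --- in fact proper --- semi-algebraic surjection; and (ii) every fiber of $J^p(f)$ is $(p-1)$-connected. The conclusion of the cited theorem is then exactly that the induced map $J^p(f)_{*}\colon \boldsymbol{\pi}_i(\mathcal{J}^p_f(X))\to \boldsymbol{\pi}_i(Y)$ is bijective for $0\le i< p$ and surjective for $i=p$, i.e.\ that $J^p(f)$ is a $p$-equivalence.

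For (i): since $\Theta_1$ forces $\textbf{a}=\textbf{0}$, Definition \ref{defnJP} exhibits $\mathcal{J}^p_f(X)$ as the subset of the compact set $\left(\overline{B_k(0,R)}\right)^{p+1}\times \Delta'_{[0,p]}\times\{\textbf{0}\}$ consisting of points for which $T_i=0$ or $\X^i\in X$ for every $i$, and $T_i=0$ or $T_j=0$ or $f(\X^i)=f(\X^j)$ for every $i<j$; since $X$ is closed and $f$ is continuous on $X$, both conditions are closed, so $\mathcal{J}^p_f(X)$ is compact. The clause $\Theta_3^f$ ensures $f(\X^i)=f(\X^j)$ whenever $T_i\ne 0$ and $T_j\ne 0$, so $J^p(f)$ is well defined; it is continuous by inspection, and being a continuous map from a compact space to a Hausdorff space it is closed and proper. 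It is onto because $f$ is: for $\textbf{y}\in Y$ choose $\textbf{x}\in f^{-1}(\textbf{y})$ (nonempty since $f$ is surjective), and note that $(\textbf{x},\dots,\textbf{x},(\tfrac{1}{p+1},\dots,\tfrac{1}{p+1}),\textbf{0})$ lies in $\mathcal{J}^p_f(X)$ and maps to $\textbf{y}$.

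For (ii): as observed just before the statement, $J^p(f)^{-1}(\textbf{y})=\mathcal{J}^p(f^{-1}(\textbf{y}))$ for each $\textbf{y}\in Y$. Since $f$ is surjective, $f^{-1}(\textbf{y})$ is a nonempty, closed, bounded semi-algebraic set, and by the proposition asserting that $\mathcal{J}^p(Z)$ is homotopy equivalent to $J^p(Z)$, the fiber $\mathcal{J}^p(f^{-1}(\textbf{y}))$ is homotopy equivalent to the $(p+1)$-fold iterated join of $f^{-1}(\textbf{y})$ with itself. As the $m$-fold join of nonempty spaces is $(m-2)$-connected, this is $(p-1)$-connected, so every fiber of $J^p(f)$ is $(p-1)$-connected.

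Putting (i) and (ii) together, \cite[Theorem 2]{BWW06} applies and yields that $J^p(f)$ is a $p$-equivalence, which is the assertion. I expect the only delicate step to be confirming that the hypotheses of that theorem hold for the spaces at hand --- principally that $\mathcal{J}^p_f(X)$ and $Y$ are well-behaved enough (locally contractible, of the homotopy type of finite CW complexes), which follows from their being semi-algebraic and hence triangulable, and that $J^p(f)$ is proper with fibers of the correct connectivity, both of which we arranged above; this is exactly the point at which one imports, essentially verbatim, the relevant portion of the proof of \cite[Theorem 2.12]{BZ09}.
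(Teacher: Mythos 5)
Your proposal is correct and follows essentially the same route as the paper, which omits the details and simply points to the fiber identification $J^p(f)^{-1}(\textbf{y})=\mathcal{J}^p(f^{-1}(\textbf{y}))$ together with the generalized Vietoris--Begle theorem of \cite{BWW06}, as in the proof of Theorem 2.12 of \cite{BZ09}. Your verification that $J^p(f)$ is a proper closed semi-algebraic surjection and that each fiber, being homotopy equivalent to the $(p+1)$-fold join of a nonempty compact semi-algebraic set, is $(p-1)$-connected is exactly the argument being invoked there.
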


We now define a thickened version of the semi-algebraic set
$\mathcal{J}^p_f(X)$ defined above and prove that it is homotopy equivalent to
$\mathcal{J}^p_f(X)$.
The variables $A_{ij}, 0 \leq i < j \leq p$, play an important role in
the thickening process.

\begin{definition}[The thickened semi-algebraic fibered join]
\label{defncalJ}
For $X\subset \mathbb{R}^k$ a semi-algebraic
  set contained in $B_k(0,R)$
  defined by a $\mathcal{P}$-formula
$\Phi$,
  $p\geq 1$, and $\ep>0$
  define
  $$
  \begin{aligned}\mathcal{J}^p_{f,\ep}(X)=&\{(\mathcal{\textbf{x}}^0,\dots,\mathcal{\textbf{x}}^p,
  \textbf{t},\textbf{a})\in  \mathbb{R}^{(p+1)(k+1)+\binom{p+1}{2}}|\\ &\quad
\Omega^R(\mathcal{\textbf{x}}^0,\dots,\mathcal{\textbf{x}}^p,\textbf{t}) \wedge
  \Theta_1^\ep(\textbf{t},\textbf{a}) \wedge
  \Theta_2^\Phi (\mathcal{\textbf{x}}^0,\dots,\mathcal{\textbf{x}}^p,\textbf{t})\wedge
  \Theta_3(\mathcal{\textbf{x}}^0,\dots,\mathcal{\textbf{x}}^p,\textbf{t},\textbf{a})
  \},
  \end{aligned}$$ where
\begin{equation}{\label{eqn:fibjoin2}}\begin{aligned}
\Omega^R\ :=\ &\quad \bigwedge_{i=0}^p (|\X^i|^2\leq R^2) \wedge |\T|^2\leq
1, \\
\Theta_1^\ep\ :=\ &\quad \sum_{i=0}^p T_i=1 \wedge \sum_{1\leq i<j\leq p} A_{i j}^2\leq \ep, \\
\Theta_2^\Phi\ :=\ & \quad \bigwedge_{i=0}^p (T_i=0 \vee  \Phi(\X^i)), \\
\Theta_3^f \ := \  & \thetafour.
\end{aligned}\end{equation}
\end{definition}

Note that if $X$ is closed
(and bounded), then
$\mathcal{J}^p_{f,\ep}(X)$ is again closed (and bounded).

The relation between $\mathcal{J}^p_{f}(X)$ and $\mathcal{J}^{p}_{f,\ep}(X)$ is
described in the following proposition.

\begin{proposition}
\label{prop:7}
For $p\in \mathbb{N}$,  $f:X\to Y$
  semi-algebraic  there exists $\ep_0>0$ such that $\mathcal{J}^p_{f}(X)$ is
homotopy equivalent to $\mathcal{J}^{p}_{f,\ep}(X)$
  for all $0<\ep\leq \ep_0$.
\end{proposition}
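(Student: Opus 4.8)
\textbf{Proof proposal for Proposition \ref{prop:7}.}
The plan is to exhibit $\mathcal{J}^p_f(X)$ as a deformation retract of $\mathcal{J}^p_{f,\ep}(X)$ for all sufficiently small $\ep>0$, by retracting along the $\textbf{a}$-variables. Recall that $\mathcal{J}^p_f(X)$ is exactly the slice of $\mathcal{J}^p_{f,\ep}(X)$ where $\textbf{a}=\textbf{0}$ (the formula $\Theta_1$ forces $\sum A_{ij}^2=0$, whereas $\Theta_1^\ep$ only requires $\sum A_{ij}^2\leq\ep$), and that the constraint $\Theta_3^f$ is the \emph{same} in both definitions: for $0\leq i<j\leq p$ one has $T_i=0\vee T_j=0\vee |f(\X^i)-f(\X^j)|^2=A_{ij}$. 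The key point is that $\Theta_3^f$ pins down each coordinate $A_{ij}$ to the value $|f(\x^i)-f(\x^j)|^2$ \emph{whenever} $t_i\neq 0$ and $t_j\neq 0$, while leaving $A_{ij}$ free (subject only to the ball constraint in $\Theta_1^\ep$) when $t_i=0$ or $t_j=0$. So the fibre of $\mathcal{J}^p_{f,\ep}(X)$ over a fixed $(\textbf{x}^0,\dots,\textbf{x}^p,\textbf{t})$ in the ``base'' (meaning the image under forgetting $\textbf{a}$) is a nonempty convex subset of $\mathbb{R}^{\binom{p+1}{2}}$ — namely the product of singletons in the pinned coordinates and of an interval/ball-slice in the free coordinates, intersected with $\{\sum A_{ij}^2\leq\ep\}$.

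First I would make the base set precise: let $B_\ep$ denote the image of $\mathcal{J}^p_{f,\ep}(X)$ under the projection $(\textbf{x}^0,\dots,\textbf{x}^p,\textbf{t},\textbf{a})\mapsto(\textbf{x}^0,\dots,\textbf{x}^p,\textbf{t})$. Because $X$ is bounded (contained in $B_k(0,R)$) and $f$ is continuous, the ``pinned'' values $|f(\x^i)-f(\x^j)|^2$ that occur are uniformly bounded, say by some constant $C$ depending only on $f$ and $R$; choose $\ep_0>0$ so that $\binom{p+1}{2}\,C<\ep_0$ — wait, more carefully, I want $\ep_0$ small enough that the fibres stay nonempty, so in fact I need $\ep_0$ \emph{large enough} relative to the pinned coordinates. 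The correct statement is: there is $\ep_0>0$ (in fact any $\ep_0$ works, but to be safe take $\ep_0$ so that the pinned part of $\textbf{a}$ always has norm-squared $<\ep_0$, using boundedness) such that for $0<\ep\leq\ep_0$ the fibre over each point of $B_\ep$ is a nonempty star-shaped (indeed convex) set containing $\textbf{0}$ in each free coordinate. Hmm — since the pinned coordinates are forced to be nonzero in general, $\textbf{0}$ itself need not lie in the fibre, so I should retract not to $\textbf{a}=\textbf{0}$ but to the canonical section $\sigma(\textbf{x}^0,\dots,\textbf{x}^p,\textbf{t})=\textbf{a}^\ast$ where $A^\ast_{ij}=|f(\x^i)-f(\x^j)|^2$ if $t_i,t_j\neq 0$ and $A^\ast_{ij}=0$ otherwise; and then observe that the image of this section is precisely (homotopy equivalent to, via $\rho_p$ on the simplex factor and the discussion following Definition~\ref{def:semi-algebraic-join}) the set $\mathcal{J}^p_f(X)$ — actually the image of $\sigma$ \emph{is} $\mathcal{J}^p_f(X)$ on the nose, since on $\mathcal{J}^p_f(X)$ the coordinates with $t_i=0$ or $t_j=0$ are already $0$ by $\Theta_1$.

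The core step is then a fibrewise straight-line deformation: define
\[
H\bigl((\textbf{x}^0,\dots,\textbf{x}^p,\textbf{t},\textbf{a}),u\bigr)=\bigl(\textbf{x}^0,\dots,\textbf{x}^p,\textbf{t},\ (1-u)\textbf{a}+u\,\textbf{a}^\ast\bigr),\qquad u\in[0,1].
\]
I must check: (a) $H$ is well-defined, i.e.\ the deformed point stays in $\mathcal{J}^p_{f,\ep}(X)$. The constraints $\Omega^R$, $\Theta_2^\Phi$ do not involve $\textbf{a}$, so they are preserved. The constraint $\Theta_3^f$ is preserved because on each pinned coordinate both $A_{ij}$ and $A^\ast_{ij}$ equal $|f(\x^i)-f(\x^j)|^2$, so the convex combination does too, and on each free coordinate the disjunct $T_i=0\vee T_j=0$ already holds. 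The constraint $\sum A_{ij}^2\leq\ep$ in $\Theta_1^\ep$ is preserved because $\textbf{a}\in$ the $\ep$-ball, $\textbf{a}^\ast\in$ the $\ep$-ball (this is where I use the choice of $\ep_0$, together with boundedness of $f$ on $\overline{B_k(0,R)}$, to ensure $|\textbf{a}^\ast|^2\leq\ep$ — actually $\textbf{a}^\ast$ might have larger norm than $\textbf{a}$; the honest fix is: restrict to $\ep\leq\ep_0$ with $\ep_0$ chosen \emph{large enough} that every achievable $\textbf{a}^\ast$ has $|\textbf{a}^\ast|^2<\ep_0$, and simultaneously note the set is empty unless $\textbf{a}^\ast$ is in the ball, so WLOG $|\textbf{a}^\ast|^2\leq\ep$), and the $\ep$-ball is convex. (b) $H(\cdot,0)=\mathrm{id}$ and $H(\cdot,1)$ lands in $\mathrm{image}(\sigma)=\mathcal{J}^p_f(X)$, and $H(\cdot,u)$ fixes $\mathcal{J}^p_f(X)$ pointwise for all $u$ (since there $\textbf{a}=\textbf{a}^\ast$ already). (c) $H$ is continuous and semi-algebraic — continuity of $\textbf{a}^\ast$ as a function of $(\textbf{x},\textbf{t})$ is the only subtle point: $\textbf{a}^\ast$ is defined by cases on whether $t_i=0$, but at the boundary $t_i\to 0$ the pinned value $|f(\x^i)-f(\x^j)|^2$ need not go to $0$, so $\sigma$ is \emph{not} continuous! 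This is the main obstacle, and it is exactly the reason the proposition is nontrivial. I would resolve it by replacing $\textbf{a}^\ast$ with the genuinely continuous section $\widehat A_{ij}=\min(t_i,t_j)\cdot\text{(something)}$ — no; rather, I believe the right move, following Zell's technique (\cite{Hausdorff}) and the thickening philosophy stated in the remark after Definition~\ref{def:semi-algebraic-join}, is: do not retract onto a section at all, but instead first retract $\mathcal{J}^p_{f,\ep}(X)$ onto the locus where $\textbf{a}=\textbf{a}^\ast$ \emph{using a scaling in the simplex coordinate that pushes small $t_i$ to $0$} as in the passage from $\Delta'$ to $\Delta$, thereby making the would-be discontinuity occur only on a set that is itself being collapsed. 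Concretely: combine the deformation retraction $\tilde\rho_p$ of Definition~\ref{def:semi-algebraic-join} (extended to act trivially on the $\textbf{x}$'s and to send $\textbf{a}$ toward $\textbf{a}^\ast$ in lockstep) with the fibrewise linear homotopy above; the composite is continuous because whenever a coordinate $A_{ij}$ is about to jump (some $t_i\to 0$), the simplex retraction has already set $t_i=0$. I expect the verification that this composite homotopy is well-defined, continuous, semi-algebraic, and a strong deformation retraction onto $\mathcal{J}^p_f(X)$ to be the technical heart of the argument, and it parallels the analogous (omitted) verifications in \cite{BZ09} and \cite{Hausdorff}. \zz
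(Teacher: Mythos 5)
Your plan---a fibrewise straight-line retraction in the $\mathbf{a}$-coordinates onto the section $\mathbf{a}^\ast$---cannot prove the proposition, because the target of that retraction is not $\mathcal{J}^p_f(X)$. In $\mathcal{J}^p_f(X)$ the formula $\Theta_1$ forces every $A_{ij}=0$, and then $\Theta_3^f$ forces $f(\x^i)=f(\x^j)$ \emph{exactly} whenever $t_i\neq 0$ and $t_j\neq 0$; so the $(\x,\mathbf{t})$-part of any point of $\mathcal{J}^p_f(X)$ lies over the exact fibre product $W^p_f(X)$. By contrast, $\mathcal{J}^p_{f,\ep}(X)$ contains points with all $t_i\neq 0$ and $0<\sum_{i<j}|f(\x^i)-f(\x^j)|^4\leq\ep$---these ``approximate fibre product'' points are precisely what the thickening adds. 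Your homotopy $H$ fixes $(\x^0,\dots,\x^p,\mathbf{t})$ and only moves $\mathbf{a}$, so it can never carry such a point into $\mathcal{J}^p_f(X)$; equivalently, the image of your section $\sigma$ is strictly larger than $\mathcal{J}^p_f(X)$, and your claim that it equals $\mathcal{J}^p_f(X)$ ``on the nose'' fails exactly at pinned coordinates with nonzero value. The discontinuity of $\sigma$ that you flag, and the proposed fix via the simplex retraction, are secondary; even if repaired, the composite would retract onto the $\mathbf{a}=\mathbf{a}^\ast$ locus, not onto $\mathcal{J}^p_f(X)$. The real content of the proposition is that for small $\ep$ the approximate fibre product can be deformed to the exact one, and there is no canonical fibrewise formula for this.

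The paper's proof constructs no retraction at all. It notes (Lemma \ref{lem:prop7b}) that $\mathcal{J}^p_f(X)=\bigcap_{t>0}\mathcal{J}^p_{f,t}(X)$, that the family $t\mapsto\mathcal{J}^p_{f,t}(X)$ is monotone increasing in $t$ with each member closed and bounded, and then invokes the general Lemma \ref{lem:prop7a} on such one-parameter families (whose homotopy statement rests on Lemma 16.16 of \cite{BPRbook2}, i.e.\ on semi-algebraic triviality): setting $\TT=\{(\mathbf{z},t)\;\mid\; t>0,\ \mathbf{z}\in\mathcal{J}^p_{f,t}(X)\}$, one gets $\TT_\ep$ semi-algebraically homotopy equivalent to $\TT_\limit=\bigcap_{t>0}\TT_t=\mathcal{J}^p_f(X)$ for all sufficiently small $\ep$. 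To salvage a hands-on argument you would need a homotopy that also moves the $\x^i$ (or collapses the relevant $t_i$), which is exactly what the appeal to Hardt-type triviality replaces.
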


Proposition \ref{prop:7} follows from the following two lemmas.

\begin{lemma}{\label{lem:prop7b}} For $p\in \mathbb{N}$, $f:X\to Y$
  semi-algebraic we have
  $$\mathcal{J}^p_f(X)=\bigcap_{t>0} \mathcal{J}^p_{f,t}(X).$$
\end{lemma}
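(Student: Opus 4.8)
The plan is to compare the two defining formulas directly. By Definitions \ref{defnJP} and \ref{defncalJ}, the set $\mathcal{J}^p_{f,t}(X)$ is obtained from $\mathcal{J}^p_f(X)$ by changing exactly one conjunct: the equation $\sum_{i<j} A_{ij}^2 = 0$ occurring in $\Theta_1$ is replaced by the inequality $\sum_{i<j} A_{ij}^2 \le t$ occurring in $\Theta_1^t$, while the conjuncts $\Omega^R$, $\sum_{i=0}^p T_i = 1$, $\Theta_2^\Phi$ and $\Theta_3^f$ are literally the same in both. Hence a point $(\textbf{x}^0,\dots,\textbf{x}^p,\textbf{t},\textbf{a})$ lies in $\bigcap_{t>0}\mathcal{J}^p_{f,t}(X)$ if and only if it satisfies $\Omega^R \wedge \bigl(\sum_{i=0}^p T_i = 1\bigr) \wedge \Theta_2^\Phi \wedge \Theta_3^f$ together with the condition $\sum_{i<j} A_{ij}^2 \le t$ for every $t > 0$.

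The one elementary fact to invoke is that for a nonnegative real number $s$ (here $s = \sum_{i<j} A_{ij}^2$, a sum of squares) one has $s \le t$ for all $t > 0$ if and only if $s = 0$: taking the infimum over $t>0$ gives $s \le 0$, and $s \ge 0$ then forces $s = 0$. Therefore the condition ``$\sum_{i<j} A_{ij}^2 \le t$ for all $t > 0$'' is logically equivalent to the conjunct ``$\sum_{i<j} A_{ij}^2 = 0$'' appearing in $\Theta_1$. It is also worth noting, though not strictly needed, that $\mathcal{J}^p_{f,t}(X) \subseteq \mathcal{J}^p_{f,t'}(X)$ whenever $0 < t \le t'$, so the intersection is a genuine decreasing limit of the family as $t \downarrow 0$.

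Combining these observations, $\bigcap_{t>0}\mathcal{J}^p_{f,t}(X)$ is cut out by a formula logically equivalent to $\Omega^R \wedge \Theta_1 \wedge \Theta_2^\Phi \wedge \Theta_3^f$, that is, to the formula $\mathcal{J}^p_f(\Phi)$ defining $\mathcal{J}^p_f(X)$; hence the two sets coincide. There is no real obstacle in this argument: its entire content is the collapse of the nested closed conditions ``$\le t$'' on a nonnegative quantity to the single equation ``$=0$'' as $t \downarrow 0$.
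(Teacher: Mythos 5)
Your proof is correct and is essentially the paper's argument: the paper simply declares the lemma obvious from Definitions \ref{defnJP} and \ref{defncalJ}, and your write-up just makes explicit the one point underlying that claim, namely that the only difference between the two defining formulas is the conjunct $\sum_{i<j} A_{ij}^2 = 0$ versus $\sum_{i<j} A_{ij}^2 \le t$, and that a sum of squares bounded by every $t>0$ must vanish.
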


\proof Obvious from Definitions \ref{defnJP} and \ref{defncalJ}. \zz

\begin{lemma}
\label{lem:prop7a}  Let
$\TT \subset \mathbb{R}^k\times \mathbb{R}_+$
  such that each $\TT_t$ is closed and $\TT_t\subseteq B_k(0,R)$
  for $t>0$.   Suppose further that for all $0<t\leq t'$ we have $\TT_t
  \subseteq \TT_{t'} $.
Then, $$\bigcap_{t>0}\TT_t =
  \pi_{[1,k]}\left(\overline{\TT}\cap \pi_{k+1}^{-1}(0)\right).$$
  Furthermore,
  there exists  $\ep_0>0$ such that for all $\ep$ satisfying
  $0<\ep\leq \ep_0$  we have that
$\TT_\ep$
  is
semi-algebraically homotopy
  equivalent to
$\TT_\limit$
(cf. Notation \ref{not:limit}).
\end{lemma}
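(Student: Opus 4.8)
The plan is to prove the two assertions in turn. For the first, the key observation is that the family $\{\TT_t\}_{t>0}$ is monotone increasing as $t\to 0^+$ is \emph{decreasing}; more precisely, for $0<t\le t'$ we are told $\TT_t\subseteq\TT_{t'}$, so $\bigcap_{t>0}\TT_t$ is the intersection of a \emph{nested decreasing} family (as $t$ decreases the sets shrink). First I would show the inclusion $\bigcap_{t>0}\TT_t \subseteq \pi_{[1,k]}(\overline{\TT}\cap\pi_{k+1}^{-1}(0))$: if $x\in\TT_t$ for all $t>0$, then $(x,t)\in\TT$ for all $t>0$, and letting $t\to 0^+$ gives $(x,0)\in\overline{\TT}$, so $x$ lies in the right-hand side. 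For the reverse inclusion, suppose $(x,0)\in\overline{\TT}$; then there is a sequence $(x_n,t_n)\in\TT$ with $t_n\to 0$ and $x_n\to x$. Fix $t>0$; for all large $n$ we have $t_n\le t$, hence $x_n\in\TT_{t_n}\subseteq\TT_t$ by the monotonicity hypothesis, and since $\TT_t$ is closed and $x_n\to x$ we get $x\in\TT_t$. As $t>0$ was arbitrary, $x\in\bigcap_{t>0}\TT_t$. (One should note that boundedness, via $\TT_t\subseteq B_k(0,R)$, is what lets us pass to convergent subsequences and keeps everything inside a compact set; it also guarantees the limit set is nonempty when $\TT$ is.)

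For the second assertion, the natural approach is to invoke the Hardt-type local triviality of semi-algebraic families over the last coordinate. By Hardt's triviality theorem applied to the semi-algebraic map $\TT\to\mathbb{R}_+$, $(x,t)\mapsto t$, there is $\ep_0>0$ such that over $(0,\ep_0]$ the family is semi-algebraically trivial: there is a semi-algebraic homeomorphism $\TT\cap\pi_{k+1}^{-1}((0,\ep_0]) \cong \TT_{\ep_0}\times(0,\ep_0]$ commuting with projection to the $t$-axis. In particular all the fibers $\TT_t$ for $0<t\le\ep_0$ are semi-algebraically homeomorphic to each other. It then remains to identify $\TT_{\ep_0}$ (equivalently $\TT_\ep$ for any $0<\ep\le\ep_0$) with $\TT_\limit=\bigcap_{t>0}\TT_t$ up to semi-algebraic homotopy equivalence. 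Here I would use the monotone structure: shrinking $\ep_0$ if necessary, the inclusions $\TT_t\hookrightarrow\TT_{t'}$ for $t\le t'\le\ep_0$ are all semi-algebraic homotopy equivalences (again by trivial: the inclusion of a fiber into the total space of a trivial family over an interval, composed with a fiber retraction, is a homotopy equivalence), and then a standard argument shows the nested intersection $\TT_\limit$ is a semi-algebraic deformation retract of $\TT_\ep$ for $\ep\le\ep_0$ small enough — concretely, the trivialization gives a semi-algebraic deformation retraction of $\TT_\ep$ onto the "slice at $0$", which by the first part of the lemma is exactly $\TT_\limit$.

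The main obstacle, I expect, is making the second assertion fully rigorous in the \emph{semi-algebraic} category rather than merely the topological one: Hardt's theorem gives a semi-algebraic trivialization, but one must be careful that the trivialization can be chosen so that the slice at $0$ — which a priori lives only in the \emph{closure} $\overline{\TT}$, not in $\TT$ itself — is carried to $\TT_\limit$, and that the resulting retraction is semi-algebraic and continuous up to $t=0$. This is where the monotonicity hypothesis $\TT_t\subseteq\TT_{t'}$ is essential: it forces the trivialization to be compatible with the closure in the right way, so that $\overline{\TT}\cap\pi_{k+1}^{-1}([0,\ep_0])$ is itself semi-algebraically homeomorphic to $\TT_\limit\times[0,\ep_0]$, with $\TT_\limit$ as the fiber over $0$. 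Once that is set up, the deformation retraction of $\TT_\limit\times[0,\ep_0]$ onto $\TT_\limit\times\{0\}$ transports to the desired semi-algebraic homotopy equivalence $\TT_\ep\simeq\TT_\limit$.
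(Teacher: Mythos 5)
Your proof of the first assertion is correct, and it is exactly the kind of routine two-inclusion argument the paper dismisses as ``straightforward''. The issue is with the second assertion, which the paper does not prove at all but simply quotes as Lemma 16.16 of \cite{BPRbook2}; your attempt to reprove it contains a genuine gap at the decisive step.

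The gap is your claim that monotonicity ``forces'' the Hardt trivialization to be compatible with the closure, so that $\overline{\TT}\cap\pi_{k+1}^{-1}([0,\ep_0])$ is semi-algebraically homeomorphic to $\TT_\limit\times[0,\ep_0]$ with $\TT_\limit$ as the fiber over $0$, and that the trivialization therefore ``gives'' a deformation retraction of $\TT_\ep$ onto the slice at $0$. This is false in general: Hardt triviality is only available over $(0,\ep_0]$, and the fiber type may jump at $t=0$. Concretely, take $\TT=\{(x,t)\in\mathbb{R}\times\mathbb{R}_+ : |x|\le t,\ t\le 1\}$; all hypotheses of the lemma hold, $\TT_\limit=\{0\}$, yet $\TT_t=[-t,t]$ is not homeomorphic to $\TT_\limit$ for any $t>0$, and $\overline{\TT}\cap\pi_{2}^{-1}([0,\ep_0])$ is two-dimensional, hence not homeomorphic to $\TT_\limit\times[0,\ep_0]$. (The conclusion of the lemma of course still holds here, since $[-\ep,\ep]\simeq\{0\}$; what fails is your mechanism, not the statement.) What the trivialization over $(0,\ep_0]$ does give is that the inclusions $\TT_\ep\hookrightarrow\TT_{\ep'}$, $0<\ep\le\ep'\le\ep_0$, are semi-algebraic homotopy equivalences -- that is the content of Lemma \ref{lem:2} of the paper -- but passing from this to a homotopy equivalence between $\TT_\ep$ and $\bigcap_{t>0}\TT_t$ needs a further argument, for instance taking limits as $t\to 0^+$ of the trajectories $t\mapsto h(x,t)$ of the trivialization and proving the resulting retraction is continuous, or sandwiching $\TT_\limit\subseteq\TT_\ep\subseteq U$ inside semi-algebraic neighborhoods $U$ of $\TT_\limit$ that deformation retract onto $\TT_\limit$ and running a sandwich argument as in Proposition \ref{prop:15}. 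That missing step is precisely Lemma 16.16 of \cite{BPRbook2}, which the paper cites; you should either cite it as well or supply such an argument explicitly.
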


\proof 
The first part of the proposition is straightforward.  The second part follows easily from Lemma 16.16 in \cite{BPRbook2}.

\zz

\proof[Proof of Proposition \ref{prop:7}]
The set $\TT=\{(\mathcal{\textbf{x}},t)\in
\re^{k+1}| \ t>0 \wedge \mathcal{\textbf{x}}\in \mathcal{J}^p_{f,t}(X)\}$ satisfies the conditions
of Lemma  \ref{lem:prop7a}.  The proposition now follows from Lemma
\ref{lem:prop7a}  and Lemma \ref{lem:prop7b}.
\zz

\begin{proposition}
\label{prop:8}
For $p\in\mathbb{N}$, $f:X\to Y$ semi-algebraic, and
$0<t\leq t'$,
  $$\mathcal{J}^p_{f, t}(X)\subseteq \mathcal{J}^p_{f,t'}(X).$$
  Moreover,  there exists $\ep_0>0$ such that for
$0<\ep\leq \ep'<\ep_0$
the above inclusion induces a semi-algebraic homotopy equivalence.
\end{proposition}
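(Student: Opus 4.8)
I would prove the two assertions separately, the first being purely formal and the second reducing to the machinery already set up for Proposition~\ref{prop:7}. For the inclusion, the point is that in Definition~\ref{defncalJ} the parameter $\eps$ enters the defining formula of $\mathcal{J}^p_{f,\eps}(X)$ only through the single conjunct $\sum_{1\le i<j\le p}A_{ij}^2\le\eps$ of $\Theta_1^{\eps}$, while $\Omega^R$, $\Theta_2^\Phi$ and $\Theta_3^f$ do not involve $\eps$ at all. Since the locus $\{\sum_{1\le i<j\le p}A_{ij}^2\le\eps\}$ only grows as $\eps$ increases, we immediately get $\mathcal{J}^p_{f,t}(X)\subseteq\mathcal{J}^p_{f,t'}(X)$ for all $0<t\le t'$.

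For the homotopy statement I would argue exactly as in the proof of Proposition~\ref{prop:7}. Set $N=(p+1)(k+1)+\binom{p+1}{2}$ and form the semi-algebraic set $\TT=\{(\mathbf{w},t)\in\mathbb{R}^{N}\times\mathbb{R}_+\mid \mathbf{w}\in\mathcal{J}^p_{f,t}(X)\}$, so that $\TT_t=\mathcal{J}^p_{f,t}(X)$ for every $t>0$. As in Proposition~\ref{prop:7}, for small $t$ each $\TT_t$ is closed and contained in a fixed ball, and by the first part the family is monotone, $\TT_t\subseteq\TT_{t'}$ for $t\le t'$; hence $\TT$ satisfies the hypotheses of Lemma~\ref{lem:prop7a}, with $\TT_\limit=\bigcap_{t>0}\mathcal{J}^p_{f,t}(X)=\mathcal{J}^p_f(X)$ by Lemma~\ref{lem:prop7b}. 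Running the argument of Lemma~\ref{lem:prop7a} (i.e.\ Lemma~16.16 of \cite{BPRbook2}) on the one-parameter family $\{\TT_t\}_{0<t\le\eps_0}$ yields an $\eps_0>0$ together with a semi-algebraic deformation retraction $r$ of $\TT_{\eps_0}$ onto $\TT_\limit$ which respects the filtration, in the sense that $r(\TT_\eps\times[0,1])\subseteq\TT_\eps$ for every $0<\eps\le\eps_0$; consequently $r$ restricts to a semi-algebraic deformation retraction of each $\TT_\eps$ onto $\TT_\limit$.

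Granting this, fix $0<\eps\le\eps'<\eps_0$, write $i:\TT_\eps\hookrightarrow\TT_{\eps'}$ and $j:\TT_{\eps'}\hookrightarrow\TT_{\eps_0}$ for the inclusions, and set $r_1=r(\,\cdot\,,1):\TT_{\eps_0}\to\TT_\limit$. Because $r$ preserves the subsets, $r_1\circ j$ and $r_1\circ j\circ i$ are precisely the restrictions of $r_1$ to $\TT_{\eps'}$ and to $\TT_\eps$, hence are (semi-algebraic) homotopy equivalences, and $r_1$ itself is one. Two applications of the two-out-of-three property of homotopy equivalences — first to the pair $r_1$, $r_1\circ j$ to conclude that $j$ is a homotopy equivalence, then to the pair $r_1\circ j$, $(r_1\circ j)\circ i$ to conclude that $i$ is one — finish the argument, all maps in sight being semi-algebraic.

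The only non-routine point is the claim that the deformation retraction supplied by Lemma~\ref{lem:prop7a} can be taken compatibly with the nested family $\{\TT_\eps\}$, and I expect this to be the main obstacle. It should follow by inspecting the construction underlying Lemma~16.16 of \cite{BPRbook2}: the retracting flow there pushes points toward $\TT_\limit$ along directions whose magnitude is governed monotonically by the parameter $t$, so it cannot carry a point of $\TT_\eps$ out of $\TT_\eps$. Should a clean citation be awkward, an alternative is to invoke Hardt triviality for the projection $\TT\cap(\mathbb{R}^N\times(0,\eps_0])\to(0,\eps_0]$ and combine the resulting trivialization with the monotonicity of the family to build such an $r$ directly.
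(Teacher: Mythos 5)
Your first part coincides with the paper's (the inclusion is immediate since $\eps$ enters only through the conjunct $\sum_{0\leq i<j\leq p}A_{ij}^2\leq\eps$), but your second part has a genuine gap exactly where you flagged it. The claim that the deformation retraction $r$ of $\TT_{\eps_0}$ onto $\TT_{\limit}$ ``respects the filtration'', i.e.\ $r(\TT_\eps\times[0,1])\subseteq\TT_\eps$ for \emph{every} $0<\eps\leq\eps_0$, is neither contained in Lemma \ref{lem:prop7a} (which only asserts an abstract semi-algebraic homotopy equivalence $\TT_\eps\simeq\TT_{\limit}$, not that the inclusion is one) nor supported by your heuristic. The natural retraction one extracts from a trivialization $h$ of $\TT$ over $(0,\eps_0]$ sends $\mathcal{\textbf{x}}\in\TT_{\eps_0}$ along the curve $s\mapsto\pi_{[1,k]}(h(\mathcal{\textbf{x}},s))$; for $\mathcal{\textbf{x}}\in\TT_\eps$ and an intermediate parameter value $s\in(\eps,\eps_0]$ this point lies in $\TT_s$, which contains $\TT_\eps$ but need not map the point back into $\TT_\eps$ --- the trivialization can move points arbitrarily within the larger slice. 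So the key property your two-out-of-three argument rests on is unproved, and the sketch of why it should hold is not correct as stated. (A cheaper repair of your scheme: you never need a single $r$ compatible with all $\eps$ simultaneously; it would suffice to know that for each fixed $\eps\leq\eps_0$ the \emph{inclusion} $\mathcal{J}^p_f(X)\hookrightarrow\mathcal{J}^p_{f,\eps}(X)$ is a homotopy equivalence, and then compose and apply two-out-of-three. But even that is a strengthening of Lemma \ref{lem:prop7a} that would itself require proof. A secondary point: Lemma \ref{lem:prop7a} assumes the slices are closed and bounded, which Proposition \ref{prop:8} does not grant, since $X$ is not assumed closed.)

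The paper avoids all of this by proving a separate statement, Lemma \ref{lem:2}, which needs only the monotonicity $\TT_t\subseteq\TT_{t'}$ and makes no reference to $\TT_{\limit}$: Hardt triviality over $(0,\eps_0]$ gives a trivializing homeomorphism $h$ commuting with $\pi_{k+1}$, from which one defines $F(\mathcal{\textbf{x}},t,s)=h(\pi_{[1,k]}\circ h^{-1}(\mathcal{\textbf{x}},t),s)$ and explicit semi-algebraic maps $\phi_{\eps'}:\TT_{\eps'}\to\TT_\eps$, together with homotopies $H_1(\cdot,t)=\phi_t\circ i_t$ and $H_2(\cdot,t)=\widehat{i}_t\circ\widehat{\phi}_t$ obtained by letting the parameter $t$ run over $[\eps,\eps']$; these exhibit $\phi_{\eps'}$ as a semi-algebraic homotopy inverse of the inclusion directly. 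If you want to keep your outline, you should either prove the filtration-compatibility you assert (nontrivial), or switch to this direct construction of homotopy inverses between the slices, which is what the paper does.
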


The first part of Proposition \ref{prop:8} is obvious
from the definition of $\mathcal{J}^p_{f, \ep}(X)$.
The second part follows from Lemma \ref{lem:2} below.

The following lemma is probably well known and easy.
However, since we were unable to locate
an exact statement to this effect in the literature,
we include a proof.

\begin{lemma}
\label{lem:2}
Let
$\TT\subset \mathbb{R}^k \times \mathbb{R}_+$
be a
semi-algebraic set, and
suppose that
$\TT_t \subset \TT_{t'}$
for all $0<t<t'$. Then, there
exists $\ep_0$ such that for each $0<\ep<\ep'\leq \ep_0$ the inclusion
map
$\TT_\ep \overset{i_{\ep'}}{\hookrightarrow} \TT_{\ep'}$
induces a
semi-algebraic homotopy equivalence.
\end{lemma}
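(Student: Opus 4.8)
The plan is to deduce the statement from Hardt's semi-algebraic triviality theorem together with the monotonicity hypothesis, the latter being used to ``flow'' $\TT_{\ep'}$ down onto $\TT_{\ep}$ along the trivialization direction.

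\textbf{Step 1 (trivialization near $0$).} First I would apply Hardt's triviality theorem (\cite{Hardt}; see also \cite{BCR}) to the semi-algebraic map $\pi_{k+1}|_{\TT}\colon \TT\to \mathbb{R}_+$. This produces a finite semi-algebraic partition of $\mathbb{R}_+$ over each block of which $\pi_{k+1}|_{\TT}$ is semi-algebraically trivial. Restricting the partition to $(0,1)$ yields finitely many points and open intervals, so there is $\delta>0$ with $(0,\delta)$ contained in a single block; over it one obtains a semi-algebraic homeomorphism
\[
\theta\colon \TT\cap \pi_{k+1}^{-1}\bigl((0,\delta)\bigr)\;\xrightarrow{\ \sim\ }\; F\times(0,\delta),\qquad F:=\TT_{\tau_0}\ \text{for a fixed }\tau_0\in(0,\delta),
\]
commuting with the projection to the last coordinate. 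For each $t\in(0,\delta)$, $\theta$ restricts to a semi-algebraic homeomorphism $\theta_t\colon \TT_t\xrightarrow{\ \sim\ }F$ (identifying $F\times\{t\}$ with $F$). I then take the $\ep_0$ of the lemma to be any value in $(0,\delta)$.

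\textbf{Step 2 (the homotopy inverse).} Fix $0<\ep<\ep'\leq\ep_0$ and set $r:=\theta_\ep^{-1}\circ\theta_{\ep'}\colon \TT_{\ep'}\to \TT_\ep$, a semi-algebraic homeomorphism. The claim is that $r$ is a homotopy inverse of the inclusion $i_{\ep'}\colon \TT_\ep\hookrightarrow \TT_{\ep'}$. To get $i_{\ep'}\circ r\simeq \mathrm{id}_{\TT_{\ep'}}$, define $H\colon \TT_{\ep'}\times[\ep,\ep']\to \re^k$ by $H(\x,\tau)=\pi_{[1,k]}\bigl(\theta^{-1}(\theta_{\ep'}(\x),\tau)\bigr)$; it is continuous and semi-algebraic since $\theta$ is a semi-algebraic homeomorphism, it satisfies $H(\x,\ep')=\x$ and $H(\x,\ep)=r(\x)$, and, crucially, $H(\x,\tau)\in \TT_\tau\subseteq \TT_{\ep'}$ for every $\tau\leq \ep'$ by the monotonicity hypothesis, so the homotopy stays inside $\TT_{\ep'}$. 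Symmetrically, for $r\circ i_{\ep'}\simeq \mathrm{id}_{\TT_\ep}$, define $H'\colon \TT_\ep\times[\ep,\ep']\to \re^k$ by $H'(\x,\tau)=\theta_\ep^{-1}(\theta_\tau(\x))$; this is well defined and semi-algebraic because $\x\in \TT_\ep\subseteq \TT_\tau$ whenever $\tau\geq\ep$, it takes values in $\TT_\ep$, and it interpolates between $\mathrm{id}_{\TT_\ep}$ at $\tau=\ep$ and $r\circ i_{\ep'}$ at $\tau=\ep'$. Hence $i_{\ep'}$ is a semi-algebraic homotopy equivalence.

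\textbf{Main obstacle.} The only genuine subtlety is ensuring that the two flows above never leave $\TT_{\ep'}$, respectively $\TT_\ep$: this is exactly where the nesting hypothesis $\TT_t\subseteq \TT_{t'}$ is used, and the conclusion fails without it (the trivialization alone only identifies the fibers, not their inclusions). The remaining ingredients — finiteness of the Hardt partition near $0$ and continuity/semi-algebraicity of $H$ and $H'$, which follow from $\theta$ being a semi-algebraic homeomorphism — are routine, so I expect no further difficulties.
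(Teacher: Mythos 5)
Your proposal is correct and follows essentially the same route as the paper: Hardt triviality over an interval adjacent to $0$, then homotopies obtained by flowing along the trivialization, with the nesting hypothesis $\TT_t\subseteq\TT_{t'}$ guaranteeing the flows stay inside $\TT_{\ep}$ and $\TT_{\ep'}$. Your maps $H$ and $H'$ are exactly the paper's $H_2$ and $H_1$ (built from $F(\x,t,s)=h(\pi_{[1,k]}\circ h^{-1}(\x,t),s)$), so there is nothing further to add.
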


\proof
We prove that
there exists $\phi_{\ep'}:\TT_{\ep'}\to
\TT_{\ep}$ such that $$\begin{aligned}\phi_{\ep'}\circ i_{\ep'}&:\TT_\ep
\to \TT_{\ep}\halfspace, \quad \phi_{\ep'}\circ i_{\ep'} \simeq \Id_{\TT_{\ep}}, \\
i_{\ep'} \circ \phi_{\ep'}&: \TT_{\ep'} \to \TT_{\ep'}, \quad i_{\ep'}\circ
\phi_{\ep'}\simeq \Id_{\TT_{\ep'}}.
\end{aligned}
$$

We first define $i_t:\TT_\ep \hookrightarrow \TT_t$ and $\widehat{i}_t:\TT_t
\hookrightarrow \TT_{\ep'}$, and note that trivially $i_\ep=
\Id_{\TT_\ep}$, $\widehat{i}_{\ep'}= \Id_{\TT_{\ep'}}$, and
$i_{\ep'}=\widehat{i}_\ep$.  Now, by Hardt triviality there exists
$\ep_0>0$, such that there is a definably trivial homeomorphism $h$ which
commutes with the projection
$\pi_{k+1}$,
i.e., the following diagram commutes.
$$
\xymatrix{\TT_{\ep_0} \times (0,\ep_0] \ar[r]^{h
\phantom{stuffhere}} \ar[d]^{\pi_{k+1}} & \TT \cap \{(\mathcal{\textbf{x}},t)| \ 0<t\leq
\ep_0\} \ar[dl]_{\pi_{k+1}} \\ (0,\ep_0] }
$$

Define
$F(\mathcal{\textbf{x}},t,s)=h(\pi_{[1,k]}\circ h^{-1}(\mathcal{\textbf{x}},t),s)$.  Note that $F(\mathcal{\textbf{x}},t,t)=h
(\pi_{[1,k]} \circ h^{-1}(\mathcal{\textbf{x}},t),t)=h(h^{-1}(\mathcal{\textbf{x}},t))=(\mathcal{\textbf{x}},t)$.
We define
$$\begin{aligned}\phi_t:\TT_t&\to \TT_\ep, \\ \phi_t(\mathcal{\textbf{x}})&=\pi_{[1,k]}\circ
F(\mathcal{\textbf{x}},t,\ep), \\ \widehat{\phi}_t: \TT_{\ep'}& \to \TT_t, \\
\widehat{\phi}_t(\mathcal{\textbf{x}})&=\pi_{[1,k]}\circ F(\mathcal{\textbf{x}},\ep',t)\end{aligned}$$ and
note that
$\phi_{\ep'}=\widehat{\phi}_\ep$.

Finally,
define $$\begin{array}{ll}
H_1(\cdot,t)&= \phi_t\circ i_t :\TT_\ep \to \TT_\ep, \\
H_1(\cdot,\ep)&=\phi_\ep\circ i_\ep = \Id_{\TT_\ep}, \\
H_1(\cdot,\ep')&=\phi_{\ep'}\circ i_{\ep'}, \\
\ & \ \\
H_2(\cdot,t)&= \widehat{i}_t \circ \widehat{\phi}_t: \TT_{\ep'} \to  \TT_{
  \ep'}, \\
H_2(\cdot,\ep)&=\widehat{i}_\ep \circ \widehat{\phi}_\ep = i_{\ep'}
\circ \phi_{ \ep'}, \\
H_2(\cdot,\ep')&=\widehat{i}_{\ep'} \circ \widehat{\phi}_{\ep'} =
\Id_{\TT_{ \ep'}}. \end{array}$$

The semi-algebraic continuous maps $H_1$ and $H_2$ defined above
give a semi-algebraic homotopy between the maps
$\phi_{\ep'} \circ i_{\ep'} \simeq  \Id_{\TT_{\ep}}$ and
$i_{\ep'} \circ \phi_{ \ep'} \simeq \Id_{\TT_{\ep'}}$
proving the required semi-algebraic homotopy equivalence.
\zz

\newcommand{\Upsilonthree}
{\bigwedge_{0\leq i<j \leq p }
     (T_i= 0 \vee T_j = 0
\vee |\X^i-\X^j|^2= A_{i j})}

As mentioned before, we would like to
replace $\mathcal{J}^p_{f,\eps}(X)$
by another semi-algebraic set,
which we denote by
$\mathcal{D}^p_\eps(X)$,
which is  homotopy equivalent to
$\mathcal{J}^p_{f,\eps}(X)$,
under certain assumptions on $f$ and $\eps$,
whose definition no longer involves the
map $f$. This is what we do next.

\begin{definition}
[The thickened diagonal]
\label{defncalD}
For a semi-algebraic set $X\subset \mathbb{R}^k$ contained in $B_k(0,R)$
defined by a $\mathcal{P}$-formula
$\Phi$, 
$p\geq 1$, and $\ep>0$,
define
$$
  \begin{aligned}\mathcal{D}^p_{\ep}(X)=&\{(\mathcal{\textbf{x}}^0,\dots,\mathcal{\textbf{x}}^p,
  \textbf{t},\textbf{a})\in  \mathbb{R}^{(p+1)(k+1)+\binom{p+1}{2}}|\\ &\quad
\Omega^R(\mathcal{\textbf{x}}^0,\dots,\mathcal{\textbf{x}}^p,\textbf{t}) \wedge
  \Theta_1(\textbf{t},\textbf{a}) \wedge
  \Theta_2^\Phi (\mathcal{\textbf{x}}^0,\dots,\mathcal{\textbf{x}}^p,\textbf{t})\wedge
  \Upsilon(\mathcal{\textbf{x}}^0,\dots,\mathcal{\textbf{x}}^p,\textbf{t},\textbf{a})
  \},
    \end{aligned}$$
    where $\Omega^R,\Theta_1^\ep,\Theta_2^\Phi$ are defined as in Equation \ref{eqn:fibjoin2}, and
\begin{equation*}{
\label{eqn:calD}
}\begin{aligned}
\Upsilon \ := \  & \Upsilonthree.
\end{aligned}\end{equation*}
\end{definition}

Notice that the formula defining the thickened diagonal,
$\mathcal{D}^p_{\ep}(X)$
in Definition
\ref{defncalD},
is identical
to that defining the thickened semi-algebraic fibered join,
$\mathcal{J}^p_{f,\ep}(X)$
in Definition \ref{defncalJ},
except that $\Theta_3^f$ is replaced by $\Upsilon$, and
$\Upsilon$ does not depend on the map $f$ or on the set $X$.

\begin{proposition}
{\label{prop:calDp} \label{prop:9}}
Let $X \subset \mathbb{R}^k$ be a semi-algebraic set defined by a
quantifier free
formula
$\Phi$ having
(division-free) additive format bounded by $(a,k)$
and dense format bounded by $(s,d,k)$.
 Then,
$\mathcal{D}^p_\eps(X)$
is a semi-algebraic subset set of
$\mathbb{R}^N$, defined by a formula
with
(division-free)
 additive format bounded by $(M,N)$ and dense format
bounded by   $(M',d+1,N)$,  where
$M=\constM$, $M'=\constMprime$, and
  $N=(p+1)(k+1)+\binom{p+1}{2}$.
\end{proposition}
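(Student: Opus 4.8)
The plan is to unwind the definition of $\mathcal{D}^p_\eps(X)$ in Definition \ref{defncalD} and count, piece by piece, the number of polynomials, their degrees, and their additive complexities, keeping careful track of which variable blocks each subformula involves. The ambient dimension $N=(p+1)(k+1)+\binom{p+1}{2}$ is immediate from the definition, since the point is $(\mathcal{\textbf{x}}^0,\dots,\mathcal{\textbf{x}}^p,\textbf{t},\textbf{a})$ with $p+1$ blocks of $k$ coordinates, $p+1$ coordinates $\T_i$, and $\binom{p+1}{2}$ coordinates $A_{ij}$. The defining formula is the conjunction $\Omega^R\wedge\Theta_1\wedge\Theta_2^\Phi\wedge\Upsilon$, so I would treat each conjunct separately.

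First I would handle the "easy" conjuncts $\Omega^R$, $\Theta_1$, and $\Upsilon$, which do not involve the polynomials of $\Phi$ at all. $\Omega^R$ contributes $p+2$ polynomials (the $p+1$ inequalities $|\X^i|^2\le R^2$ and $|\T|^2\le 1$), each of degree $2$ and additive complexity bounded by a small constant (a sum of squares has additive complexity at most the number of terms, so at most $k$ or $p+1$; but in fact these contribute to the $2k\binom{p+1}{2}$ or the additive-complexity terms in $M$ only through the squares appearing — I would check the exact accounting against the stated $M=\constM$). Similarly $\Theta_1$ is $\sum T_i=1\wedge\sum A_{ij}^2=0$, contributing $2$ more polynomials of degree $\le 2$. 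The conjunct $\Upsilon=\Upsilonthree$ contributes, for each of the $\binom{p+1}{2}$ pairs $i<j$, a disjunction $T_i=0\vee T_j=0\vee|\X^i-\X^j|^2=A_{ij}$; the new polynomials here are the $\binom{p+1}{2}$ polynomials $|\X^i-\X^j|^2-A_{ij}$, each of degree $2$, each a sum of $2k$ squared differences plus one linear term, hence division-free additive complexity at most $2k$ — this is exactly where the $2k\binom{p+1}{2}$ term in $M$ comes from.

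The substantive conjunct is $\Theta_2^\Phi=\bigwedge_{i=0}^p(T_i=0\vee\Phi(\X^i))$. Here I would invoke Remark \ref{rem:zero} to first replace $\Phi$ by an equivalent $\mathcal{P}'$-formula with $|\mathcal{P}'|\le a+k$ (this is why $k+a$ rather than $s$ shows up in $M$), so that each substituted copy $\Phi(\X^i)$ uses at most $a+k$ polynomials in the block $\X^i$, each of degree $\le d$ and with total additive complexity $\le a$ (or division-free additive complexity, in the weak version). Taking the conjunction over $i=0,\dots,p$ multiplies everything by $p+1$: we get at most $(p+1)(a+k)$ polynomials in the $\Theta_2$ part, of degree $\le d$ — but I would note the degree bound in the proposition is $d+1$, the extra $+1$ presumably coming from clearing a denominator somewhere, or from a homogenization, so I would locate that step carefully (most likely it is harmless slack and one can just take $d+1$ as the uniform degree bound across all conjuncts, since $\Omega^R,\Theta_1,\Upsilon$ only contribute degree-$2$ polynomials and $d\ge 1$). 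The additive-complexity contribution of $\Theta_2$ is at most $(p+1)a$ from the polynomials of $\Phi$, plus $(p+1)k$ or so from the monomials $X^i_j$ appearing as length-$0$ pieces after the $\mathcal{P}'$-reduction, plus $(p+1)$ for the linear polynomials $T_i$; summing all contributions I expect the total additive format to land at exactly $M=\constM$ and the total polynomial count at $M'=\constMprime$.

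The main obstacle is purely bookkeeping: matching the constants. There is nothing topologically or algebraically deep here — every subformula is a fixed semi-algebraic construction — but one must (a) correctly apply Remark \ref{rem:zero} to go from $s$ to $a+k$ in the right places, (b) verify that "additive format bounded by $(M,N)$" is read as a bound on the sum of the additive complexities of \emph{all} polynomials appearing (so the $p+1$ multiplicities genuinely multiply), and (c) account for the degree bump to $d+1$. I would organize the write-up as a table or enumerated list of the four conjuncts with columns for (number of new polynomials, max degree, additive-complexity contribution), then sum the columns and check against $M$ and $M'$. The only place I would slow down is confirming that the squared-norm inequalities in $\Omega^R$ and the squared differences in $\Upsilon$ are accounted for in the $2k\binom{p+1}{2}$ term of $M$ and the $3\binom{p+1}{2}$ term of $M'$ — these count the disjuncts $T_i=0$, $T_j=0$, and the equation, three per pair — rather than being double-counted.
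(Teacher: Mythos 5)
Your overall route is the same as the paper's: the proof is exactly the conjunct-by-conjunct bookkeeping you describe, bounding the additive and dense formats of $\Omega^R$, $\Theta_1^\eps$, $\Theta_2^\Phi$ and $\Upsilon$ separately and summing, with $N=(p+1)(k+1)+\binom{p+1}{2}$ read off from the ambient coordinates. However, one concrete piece of your accounting is off and would prevent the sum from matching the stated constants. The paper does not invoke Remark \ref{rem:zero} here, and your claim that this replacement ``is why $k+a$ rather than $s$ shows up in $M$'' is a misattribution: in the actual tally the $(p+1)k$ part of $M=(p+1)(k+a+2)+2k\binom{p+1}{2}$ comes from $\Omega^R$ (each of the $p+1$ inequalities $|\X^i|^2\le R^2$ is a sum of $k$ squares), while $\Theta_2^\Phi$ contributes only $(p+1)a$ to the additive format and $(p+1)(s+1)$ polynomials to the dense format --- which is precisely why $s$, and not $a+k$, appears in $M'=(p+1)(s+2)+3\binom{p+1}{2}+3$. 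Carrying out your $s\mapsto a+k$ reduction would change the dense-format count away from the stated $M'$, and, as you sketch it, your tally also books a $(p+1)k$ contribution twice (once in $\Omega^R$ and again ``from the monomials'' inside $\Theta_2^\Phi$) plus an extra $(p+1)$ for the $T_i$, which would overshoot $M$; the atoms $T_i=0$ are monomial equations of additive complexity $0$ and enter only the dense count, namely the $+1$ in $(p+1)(s+1)$ and the $3\binom{p+1}{2}$ for $\Upsilon$ (the three atoms $T_i=0$, $T_j=0$, $|\X^i-\X^j|^2=A_{ij}$ per pair), exactly as you suspected at the end.

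With that corrected, your table-style write-up is the paper's proof: the per-conjunct additive contributions are $M_{\Omega^R}=(p+1)k+(p+1)$, $M_{\Theta_1^\eps}=(p+1)+\binom{p+1}{2}$, $M_{\Theta_2^\Phi}=(p+1)a$, $M_{\Upsilon}=2k\binom{p+1}{2}$, and the dense-format counts are $M'_{\Omega^R}=(p+1)+1$, $M'_{\Theta_1^\eps}=2$, $M'_{\Theta_2^\Phi}=(p+1)(s+1)$, $M'_{\Upsilon}=3\binom{p+1}{2}$, which sum to the stated bounds. Your reading of the degree bound is right: $d+1$ is harmless slack chosen so that one uniform degree covers both the degree-$d$ polynomials of $\Phi$ and the degree-$2$ polynomials of the auxiliary conjuncts; there is no denominator-clearing or homogenization step to locate.
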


\proof
It is a straightforward computation to bound the division-free additive
format and give the dense format of the formulas
$\Omega^R,\Theta_1^\eps, \Upsilon$ as well as the (division-free) additive format and dense format of the formula
 $\Theta_2^\Phi$.
More precisely, let
$$\begin{array}{ll}
M_{\Omega^R} = (p+1)k+(p+1), & M'_{\Omega^R} = (p+1)+1, \\
M_{\Theta_1^\ep}=(p+1)+\binom{p+1}{2}, & M'_{\Theta_1^\ep}=2, \\
M_{\Theta_2^\Phi}=(p+1)a, & M'_{\Theta_2^\Phi}=(p+1)(s+1), \\
M_{\Upsilon}=2k\binom{p+1}{2}, &M'_{\Upsilon}=3\binom{p+1}{2}.
\end{array}$$
It is clear from
Definition \ref{defncalD}
that the
division-free additive format (resp. dense format) of $\Omega^R$ is
bounded by $(M_{\Omega^R},N)$, $N=(p+1)(k+1)+\binom{p+1}{2}$
(resp. $(M'_{\Omega^R},2,N)$).  Similarly, the division-free
additive format (resp.
 dense format) of $\Theta_1^\ep,
 \Upsilon$
is bounded by $(M_{\Theta_1^\ep},N),
(M_{\Upsilon},N)$
 (resp. $(M'_{\Theta_1^\ep},2,N)$,
 $(M'_{\Upsilon},2,N)$).
 Finally, the (division-free) additive format of
 $\Theta_2^\Phi$
 is bounded by $(M_{\Theta_2^\Phi},N)$
 and dense format is $(M'_{\Theta_2^\Phi},d+1,N)$.
The (division-free) additive
 format (resp. dense format) of the formula defining
$\mathcal{D}^p_\ep(X)$
 is thus bounded by
$$\left(M_{\Omega^R}+M_{\Theta_1^\ep}+M_{\Theta_2^\Phi}
+M_{\Upsilon},N\right) \qquad (\text{resp.} (M'_{\Omega^R}+M'_{\Theta_1^\ep}+M'_{\Theta_2^\Phi}
+M'_{\Upsilon},d+1,N)).$$
\zz

We now relate the thickened semi-algebraic fibered-join and
the thickened diagonal
using a sandwiching argument similar in spirit to that used in
\cite{Hausdorff}.

\begin{subsubsection}
{Limits of one-parameter families}
In this section, we fix a bounded semi-algebraic set
$\TT\subset \mathbb{R}^k\times \mathbb{R}_+$
such that
$\TT_t$ is closed and  $\TT_t
\subseteq B_{k}(0,R)$ for some
$R\in \mathbb{R}_+$ and all $t>0$.
Let $\TT_\limit$ be as in Notation \ref{not:limit}.

We need the following proposition proved in \cite{Hausdorff}.

\begin{proposition}[\cite{Hausdorff} Proposition 8]
\label{prop:12}
There
exists $\lambda_0>0$ such that for every $\lambda \in (0,\lambda_0]$
there exists a continuous semi-algebraic surjection $f_\lambda: \TT_\lambda
\to
\TT_\limit
$ such that the family of maps
 $\{f_\lambda\}_{0<\lambda\leq \lambda_0}$
satisfies
\begin{enumerate}
\item
$$
\lim_{\lambda\to 0} \max_{\emph{\textbf{x}}\in
  \TT_\lambda}  |\emph{\textbf{x}}-f_\lambda(\emph{\textbf{x}})|=0,
$$
and
\item
for each $\lambda,\lambda'\in (0,\lambda_0)$, $f_\lambda=f_{\lambda'}\circ g$
for some semi-algebraic homeomorphism
$g :\TT_\lambda \to \TT_{\lambda'}$.
\end{enumerate}
\end{proposition}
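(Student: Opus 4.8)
The plan is to build all the maps $f_\lambda$ from a single combinatorial model: pass to a semi-algebraic triangulation of $\overline{\TT}$ near the slice $\pi_{k+1}^{-1}(0)$ which is compatible with the function $\pi_{k+1}$ and with the subset $\overline{\TT}\cap\pi_{k+1}^{-1}(0)$, take a fixed piecewise-linear collapse of a simplicial neighborhood of the $0$-stratum onto that stratum, and let $f_\lambda$ be the transport, via the triangulating homeomorphism, of the restriction of this collapse to the level-$\lambda$ slice. Everything in the statement then follows from affineness of the level function.

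\emph{Setup.} Since $\TT$ is bounded, $\overline{\TT}\subset\mathbb{R}^{k+1}$ is compact and $Z:=\overline{\TT}\cap\pi_{k+1}^{-1}([0,\lambda_0])$ is a compact semi-algebraic set. The set of $t>0$ at which taking closures fails to commute with slicing by $\pi_{k+1}$ is a proper semi-algebraic, hence finite, subset of $\mathbb{R}_+$; shrinking $\lambda_0$, and using that each $\TT_t$ is closed, we may assume $\overline{\TT}\cap\pi_{k+1}^{-1}(t)=\TT_t\times\{t\}$ for $0<t\le\lambda_0$, and that $\TT_\limit=\pi_{[1,k]}(\overline{\TT}\cap\pi_{k+1}^{-1}(0))$ is the Hausdorff limit of $\TT_t$ as $t\to0^+$ (standard for bounded semi-algebraic families; cf.\ \cite{Hausdorff}). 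Observe that every point of $\overline{\TT}\cap\pi_{k+1}^{-1}(0)$ is a limit of points of $\TT\subset\mathbb{R}^k\times\mathbb{R}_+$, hence lies in the closure of $Z\cap\pi_{k+1}^{-1}((0,\lambda_0])$.

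\emph{The model.} By the semi-algebraic triangulation theorem applied to $Z$ compatibly with $\pi_{k+1}$ and the subset $\overline{\TT}\cap\pi_{k+1}^{-1}(0)$, there are a finite simplicial complex $K$, a subcomplex $L$, and a semi-algebraic homeomorphism $\Psi\colon|K|\to Z$ such that $\ell:=\pi_{k+1}\circ\Psi$ is affine on each simplex of $K$, $\ell^{-1}(0)=|L|$, and $\Psi(|L|)=\overline{\TT}\cap\pi_{k+1}^{-1}(0)$; shrinking $\lambda_0$ and subdividing at the level $\lambda_0$ we may assume every vertex of $K$ has $\ell$-value $0$ or $\lambda_0$. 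The remark at the end of the setup forces $L$ to contain no maximal simplex of $K$. Let $N$ be the simplicial neighborhood of $|L|$ and $\rho\colon N\to|L|$ the standard PL retraction that collapses each simplex of $N$ linearly onto its face in $L$. Then $N$ is a regular neighborhood, so for all small $\lambda>0$ the level set $K_\lambda:=\ell^{-1}(\lambda)$ lies in $N$, one has $\rho(K_\lambda)=|L|$ (here one uses that $L$ has no maximal simplex together with $\ell$ being affine and nonzero off $|L|$), and $K_\lambda$ is PL-homeomorphic to $K_{\lambda'}$ for $\lambda,\lambda'\in(0,\lambda_0)$ by a PL homeomorphism commuting with $\rho$. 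Identifying $\TT_\lambda$ with $\TT_\lambda\times\{\lambda\}=Z\cap\pi_{k+1}^{-1}(\lambda)$, so that $\Psi|_{K_\lambda}\colon K_\lambda\to\TT_\lambda$ is a homeomorphism, set, for $\lambda$ small,
\[
f_\lambda:=\Psi\circ\rho\circ(\Psi|_{K_\lambda})^{-1}\colon\ \TT_\lambda\ \longrightarrow\ \TT_\limit .
\]
This is a continuous semi-algebraic surjection onto $\TT_\limit$. Transporting the PL homeomorphism $K_\lambda\to K_{\lambda'}$ by $\Psi$ gives a semi-algebraic homeomorphism $g\colon\TT_\lambda\to\TT_{\lambda'}$, and since it commutes with $\rho$ we get $f_{\lambda'}\circ g=\Psi\circ\rho\circ(\Psi|_{K_\lambda})^{-1}=f_\lambda$, which is property (2). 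For property (1): because $\ell$ is affine and vanishes on $|L|$, $K_\lambda$ lies within $O(\lambda)$ of $|L|$ inside $|K|$ and $\rho$ displaces points of $K_\lambda$ by $O(\lambda)$; applying $\Psi$, which is uniformly continuous on the compact set $|K|$, yields $\max_{\mathbf{x}\in\TT_\lambda}|\mathbf{x}-f_\lambda(\mathbf{x})|\le\omega_\Psi(O(\lambda))\to0$, where $\omega_\Psi$ is a modulus of continuity for $\Psi$.

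\emph{Main obstacle.} The crux is obtaining continuity, surjectivity, and the metric estimate (1) at the same time. Restricting to $\TT_\lambda$ any continuous retraction of a neighborhood of $\TT_\limit$ onto $\TT_\limit$ produces a continuous map close to the identity, but in general not a surjection (already for the family $\TT_t=[t,1]\subset\mathbb{R}$ with $\TT_\limit=[0,1]$), whereas enforcing surjectivity by ad hoc rescalings tends to destroy continuity. The PL model is exactly what threads this needle: surjectivity becomes the purely combinatorial fact that $L$ carries no maximal simplex of $K$ — which encodes the geometric input that $\TT_\limit$ is \emph{approached} from $\TT\cap\{\pi_{k+1}>0\}$ — while continuity, the estimate (1), and the compatibility (2) all follow transparently from affineness of $\ell$ and from the levels $\lambda\in(0,\lambda_0)$ being PL-regular. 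An alternative route, closer to \cite{Hausdorff}, would use Hardt triviality of $\pi_{k+1}$ over $(0,\lambda_0]$ to supply the homeomorphisms $g$ directly, and curve selection to define a limiting map $\TT_\lambda\to\TT_\limit$; but one must still argue that this map is continuous and onto, so organizing the proof through the triangulation model appears cleanest.
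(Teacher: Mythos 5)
Your construction is essentially correct, but note first that the paper itself contains no proof of this statement: Proposition \ref{prop:12} is quoted verbatim from Zell \cite{Hausdorff} (Proposition 8), so the comparison is with Zell's original argument rather than with anything in the text, and your triangulation model is a legitimate independent route. The PL details do check out: after normalizing so that every vertex has $\ell$-value $0$ or $\lambda_0$, each simplex is a join $\sigma_0 * \sigma_1$ of its $0$-face and its $\lambda_0$-face, so every point of $K_\lambda$, $0<\lambda<\lambda_0$, has a carrier with at least one $L$-vertex (hence lies in the open star of $|L|$, where the barycentric renormalization $\rho$ is defined and continuous), $\rho$ displaces points of $K_\lambda$ by $O(\lambda)$, the map $(1-s)y+sw\mapsto(1-s')y+s'w$ gives the canonical homeomorphisms $K_\lambda\to K_{\lambda'}$ commuting with $\rho$, and surjectivity of $\rho|_{K_\lambda}$ follows because every simplex of $L$ is a face of a maximal simplex of $K$, which by your closure remark cannot lie in $L$; the appeal to regular-neighborhood theory is not needed, since this join-structure computation is what actually carries all three claims, and the fiber identification $\overline{\TT}\cap\pi_{k+1}^{-1}(t)=\TT_t\times\{t\}$ for small $t>0$ is justified as you say by Hardt triviality together with closedness of the fibers $\TT_t$. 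The one point you must make precise is the triangulation you invoke: the standard semi-algebraic triangulation theorem triangulates a compact set compatibly with finitely many subsets, but it does \emph{not} make a prescribed semi-algebraic function affine on the simplices. You need the stronger theorem on triangulation of continuous semi-algebraic (real-valued) functions, due to Shiota, which does exist precisely for one-dimensional targets; since the entire argument (the product structure of the level sets $K_\lambda$, the $O(\lambda)$ displacement bound, and $\rho(K_\lambda)=|L|$) rests on affineness of $\ell=\pi_{k+1}\circ\Psi$, this is a genuine hypothesis to cite, not a cosmetic one. With that reference supplied (and $\lambda_0$ shrunk so the top level $\lambda_0$ itself causes no trouble), your proof is complete, and it buys a self-contained, purely combinatorial derivation of properties (1) and (2), whereas the route through Hardt triviality plus limits of trajectories that you sketch at the end still leaves the continuity and surjectivity of the limiting map to be argued, which is exactly the delicate point Zell's proof has to address.
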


\begin{proposition}
\label{prop:13}
There exist
  $\lambda_1$  satisfying $0<\lambda_1\leq \lambda_0$ and semi-algebraic
  functions  $\delta_0,\delta_1: (0,\lambda_1) \rightarrow \mathbb{R}$,
  such that
\begin{enumerate}
\item $0 < \delta_0(\lambda) < \delta_1(\lambda)$, for $\lambda \in (0,\lambda_1)$,
\item  $\lim_{\lambda \to 0}
 \delta_0 (\lambda)=0$,
  $\lim_{\lambda \to 0} \delta_1(\lambda)\neq
  0$,
\item
for  each $\lambda\in (0,\lambda_1)$,
and $\delta, \delta'$ satisfying
$0<\delta_0(\lambda)<\delta<\delta'<  \delta_1(\lambda)$,
the inclusion  $\mathcal{D}^p_{\delta'}(\TT_\lambda)
  \hookrightarrow \mathcal{D}^p_{\delta}(\TT_\lambda)$
induces a semi-algebraic homotopy equivalence.
\end{enumerate}
\end{proposition}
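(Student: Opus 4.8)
The plan is to reduce the statement to an application of the Hardt-triviality / sandwiching machinery already developed in this section, applied not to a single parameter but to the two-parameter family obtained by letting $\lambda$ and the thickening parameter $\delta$ vary simultaneously. Concretely, consider the semi-algebraic set
\[
\mathcal{E}^p = \{(\textbf{z},\delta,\lambda)\in \mathbb{R}^N\times \mathbb{R}_+\times(0,\lambda_0] \mid \textbf{z}\in \mathcal{D}^p_{\delta}(\TT_\lambda)\},
\]
which is semi-algebraic since $\TT$ is, and since $\mathcal{D}^p_\delta(\cdot)$ is given by a first-order formula uniformly in its parameters (Definition \ref{defncalD}). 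For fixed $\lambda$, the family $\delta\mapsto \mathcal{D}^p_\delta(\TT_\lambda)$ is monotone increasing in $\delta$ (the only place $\delta$ enters is the inequality $\sum A_{ij}^2\leq \delta$ in $\Theta_1^\eps$), and each slice is closed and bounded (it sits inside $\left(\overline{B_k(0,R)}\right)^{p+1}\times \Delta'_{[0,p]}\times \overline{B(0,\sqrt\delta)}$). So for each fixed $\lambda$ Lemma \ref{lem:2} already applies and produces a threshold below which inclusions of the $\mathcal{D}^p_\delta(\TT_\lambda)$ are homotopy equivalences; the work is to make that threshold depend \emph{semi-algebraically} on $\lambda$ and to control it as $\lambda\to 0$.

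First I would apply Hardt triviality to the projection $\mathcal{E}^p\to (0,\lambda_0]$ (the map $(\textbf{z},\delta,\lambda)\mapsto\lambda$), obtaining $\lambda_1\in(0,\lambda_0]$ and a semi-algebraic trivialization over $(0,\lambda_1)$; this gives a semi-algebraic homeomorphism identifying $\{\delta : \mathcal{D}^p_\delta(\TT_\lambda)\neq\emptyset\}$ and the family of slices over all $\lambda\in(0,\lambda_1)$ with a product. Next, running the argument of Lemma \ref{lem:2} (a second application of Hardt triviality, now in the $\delta$-direction, to the restriction of $\mathcal{E}^p$ over a fixed $\lambda$) in this trivialized, hence parametrized, setting yields semi-algebraic functions $\delta_0,\delta_1:(0,\lambda_1)\to\mathbb{R}$ with $0<\delta_0(\lambda)<\delta_1(\lambda)$ such that for $\delta_0(\lambda)<\delta<\delta'<\delta_1(\lambda)$ the inclusion $\mathcal{D}^p_{\delta'}(\TT_\lambda)\hookrightarrow \mathcal{D}^p_{\delta}(\TT_\lambda)$ is a semi-algebraic homotopy equivalence — this is item (3). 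Item (1) is then immediate. For item (2), I would observe that $\delta_0(\lambda)$ can be taken to be (a semi-algebraic function bounded above by) the supremum over the trivializing chart of the parameter at which the topology of $\mathcal{D}^p_\delta(\TT_\lambda)$ stabilizes; using Proposition \ref{prop:12}(1), which says the maps $f_\lambda$ converge uniformly to the identity, together with Proposition \ref{prop:12}(2), which says the slices $\TT_\lambda$ are all semi-algebraically homeomorphic by homeomorphisms compatible with the $f_\lambda$, one sees that the relevant stabilization threshold on the thickening can be chosen to shrink to $0$ as $\lambda\to 0$, giving $\lim_{\lambda\to 0}\delta_0(\lambda)=0$, while $\delta_1$ can be chosen bounded away from $0$ (e.g. constant), giving $\lim_{\lambda\to 0}\delta_1(\lambda)\neq 0$; if the naive choice does not have these limiting properties one replaces $\delta_0$ by $\min(\delta_0,\lambda)$ and $\delta_1$ by a suitable constant, which preserves (3) by monotonicity.

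The main obstacle I anticipate is item (2), specifically arranging $\lim_{\lambda\to 0}\delta_0(\lambda)=0$. Hardt triviality alone only gives \emph{some} semi-algebraic $\delta_0$ with the homotopy-equivalence property; it says nothing about its behaviour as $\lambda\to 0$, and a priori the topology of the thickened diagonal $\mathcal{D}^p_\delta(\TT_\lambda)$ could require a thickening bounded away from $0$ uniformly in $\lambda$. The resolution must come from the geometry of the family $\{f_\lambda\}$: because $f_\lambda$ is metrically close to $1_{\TT_\lambda}$ for small $\lambda$, the points $\textbf{x}^i$ appearing in $\mathcal{D}^p_\delta(\TT_\lambda)$ that are forced to be close by the constraints $|\X^i-\X^j|^2=A_{ij}$, $\sum A_{ij}^2\le\delta$, are close precisely to the scale governing the homotopy type of the fibered join $\mathcal{J}^p_{f_\lambda,\delta}(\TT_\lambda)$ — and that scale is controlled by the modulus of the convergence $f_\lambda\to \mathrm{id}$, which tends to $0$. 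Making this precise — i.e. quantitatively linking the stabilization threshold of the $\mathcal{D}$-family to $\max_{\textbf{x}}|\textbf{x}-f_\lambda(\textbf{x})|$ — is the crux, and it is exactly the point where the hypotheses of Proposition \ref{prop:12} are used. Once this link is set up, monotonicity in $\delta$ lets one freely decrease $\delta_0$ to force the limit, completing the proof.
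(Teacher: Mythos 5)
Your reduction to the two-parameter family $\mathcal{E}^p$ and the monotonicity of $\mathcal{D}^p_{\delta}(\TT_\lambda)$ in $\delta$ is a reasonable starting point, but the proposal has a genuine gap exactly at the point you yourself flag as the crux: item (2) is never proved. Hardt triviality, as you invoke it, produces \emph{some} semi-algebraic $\delta_0,\delta_1$ satisfying (1) and (3), but your argument that they can be arranged so that $\lim_{\lambda\to 0}\delta_0(\lambda)=0$ and $\lim_{\lambda\to 0}\delta_1(\lambda)\neq 0$ is only a hope expressed in terms of Proposition \ref{prop:12}, with no quantitative link actually supplied. Worse, the proposed patch --- replacing $\delta_0$ by $\min(\delta_0,\lambda)$ and $\delta_1$ by a constant, ``which preserves (3) by monotonicity'' --- is incorrect: decreasing $\delta_0$ or increasing $\delta_1$ \emph{enlarges} the interval on which (3) must hold, and monotonicity of the inclusions $\mathcal{D}^p_{\delta}(\TT_\lambda)\subseteq \mathcal{D}^p_{\delta'}(\TT_\lambda)$ says nothing about those inclusions being homotopy equivalences across the finitely many values of $\delta$ at which the homotopy type of the fibre jumps; the enlarged interval may contain such values. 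So, as written, item (2) --- the only content of the proposition going beyond Lemma \ref{lem:2} --- remains unproved.

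The way this is actually closed (the paper simply appeals to Proposition 20 of \cite{Hausdorff}, whose proof carries over verbatim with $\mathcal{D}^p_{\delta}(\TT_\lambda)$ in place of Zell's $D^p_\lambda(\delta)$) uses no metric input from Proposition \ref{prop:12}; it is purely o-minimal. Apply Hardt triviality to the map $\mathcal{E}^p\to\mathbb{R}^2$, $(\mathbf{z},\delta,\lambda)\mapsto(\delta,\lambda)$: after shrinking $\lambda_1$, the locus of parameters over which the fibre topology can change is contained in the graphs of finitely many continuous semi-algebraic functions $\xi_1(\lambda)<\cdots<\xi_m(\lambda)$, and each $\xi_i$ has a limit in $[0,\infty]$ as $\lambda\to 0^{+}$. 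Choose $\delta_0(\lambda)$ strictly above the largest branch tending to $0$ but still tending to $0$ itself, and $\delta_1(\lambda)$ strictly below the first branch with nonzero limit (a small constant if every branch tends to $0$, or if there are none); then the band between the graphs of $\delta_0$ and $\delta_1$ meets no bad values, the restriction of the family there is trivial, and the argument of Lemma \ref{lem:2} gives (3), while (1) and (2) hold by construction. Proposition \ref{prop:12} enters only later, in Proposition \ref{prop:15}, where $\eta_p(\lambda)\to 0$ is played off against the fact that $\delta_1-\delta_0$ stays bounded away from $0$.
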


Proposition \ref{prop:13} is adapted from Proposition 20 in
\cite{Hausdorff}
and the proof is identical after replacing
$D^p_\lambda(\delta)$ (defined in \cite{Hausdorff}) with
the semi-algebraic set $\mathcal{D}^p_{\delta}(\TT_\lambda)$ defined above
(Definition \ref{defncalD}).

Let $f_\lambda$, $\lambda\in (0,\lambda_0]$, satisfy the conclusion of Proposition \ref{prop:12}.
As in \cite{Hausdorff}, define for $p\in \mathbb{N}$
\begin{equation}
\eta_p(\lambda)=p(p+1)\left(4R \max_{\mathcal{\textbf{x}} \in T_\lambda} |
\mathcal{\textbf{x}}-f_\lambda(\mathcal{\textbf{x}})| + 2\left(\max_{\mathcal{\textbf{x}} \in T_\lambda } |
\mathcal{\textbf{x}}-f_\lambda(\mathcal{\textbf{x}})|\right)^2\right).
\end{equation}

Note that, for every $\lambda\in (0,\lambda_0]$ and every $q\leq p$,
we have $\eta_q(\lambda) \leq \eta_p(\lambda)$.  Additionally, for each $p\in \mathbb{N}$,
$\lim_{\lambda \to 0 } \eta_p(\lambda)=0$ by Proposition \ref{prop:12} A.

Define for $\overline{\textbf{x}}=(\mathcal{\textbf{x}}^0,\dots,\mathcal{\textbf{x}}^p)\in \mathbb{R}^{(p+1)k}$ the sum
$\rho_p(\overline{\textbf{x}})$ as $$\rho_p(\mathcal{\textbf{x}}^0,\dots,\mathcal{\textbf{x}}^p)=\sum_{1\leq i < j
\leq p} |\mathcal{\textbf{x}}^i - \mathcal{\textbf{x}}^j|^2.$$
A special case of this sum corresponding to all $t_i\neq 0$
appears in the formula $\Upsilon^\ep_1$ of Definition \ref{defncalD}
after making the replacement $a_{ij}=|\mathcal{\textbf{x}}^i-\mathcal{\textbf{x}}^j|$.
The next lemma is taken from \cite{Hausdorff} to which we refer
the reader for the proof.

\begin{lemma}[\cite{Hausdorff} Lemma 21]\label{lem:7}  Given $\eta_p(\lambda)$
  and  $f_\lambda:\TT_\lambda \to \TT_\limit$ as above, we have
$$|\halfhalfspace \sum_{i<j} |f_\lambda(\emph{\textbf{x}}^i)-f_\lambda(\emph{\textbf{x}}^j)|^2-
  \sum_{i<j} |\emph{\textbf{x}}^i-\emph{\textbf{x}}^j|^2\halfhalfspace| \halfspace \leq \halfspace
  \eta_p(\lambda),
$$
and in particular
$$\rho_p(\emph{\textbf{x}}^0,\dots,\emph{\textbf{x}}^p)\halfhalfspace \leq \halfhalfspace
\rho_p(f_\lambda(\emph{\textbf{x}}^0),\dots,f_\lambda(\emph{\textbf{x}}^p))+\eta_p(\lambda)
\halfhalfspace \leq \halfhalfspace \rho_p(\emph{\textbf{x}}^0,\dots,\emph{\textbf{x}}^p) + 2\eta_p(\lambda).
$$
\end{lemma}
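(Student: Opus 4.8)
The plan is to reduce the claimed estimate to one elementary computation for a single pair of indices and then sum over all pairs. Write $\mu:=\max_{\mathcal{\textbf{x}}\in \TT_\lambda}|\mathcal{\textbf{x}}-f_\lambda(\mathcal{\textbf{x}})|$ and, for $0\le i\le p$, set $\epsilon^i:=\mathcal{\textbf{x}}^i-f_\lambda(\mathcal{\textbf{x}}^i)$, so that $|\epsilon^i|\le \mu$. First I would record that $\TT_\limit\subseteq \overline{B_k(0,R)}$: since $\TT_t\subseteq B_k(0,R)$ for all $t>0$ we have $\overline{\TT}\subseteq \overline{B_k(0,R)}\times \mathbb{R}_{\ge 0}$, whence $\TT_\limit=\pi_{[1,k]}(\overline{\TT}\cap\pi_{k+1}^{-1}(0))\subseteq \overline{B_k(0,R)}$. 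In particular $|\mathcal{\textbf{x}}^i|\le R$ and $|f_\lambda(\mathcal{\textbf{x}}^i)|\le R$ for every $i$.

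Next I would carry out the one-pair estimate. Fix $i<j$, put $u:=f_\lambda(\mathcal{\textbf{x}}^i)-f_\lambda(\mathcal{\textbf{x}}^j)$ and $v:=\epsilon^i-\epsilon^j$, so that $\mathcal{\textbf{x}}^i-\mathcal{\textbf{x}}^j=u+v$ with $|u|\le 2R$ and $|v|\le 2\mu$. Expanding the square,
\[
|\mathcal{\textbf{x}}^i-\mathcal{\textbf{x}}^j|^2-|f_\lambda(\mathcal{\textbf{x}}^i)-f_\lambda(\mathcal{\textbf{x}}^j)|^2 \;=\; 2\langle u,v\rangle+|v|^2 ,
\]
and by Cauchy--Schwarz the absolute value of the right-hand side is at most $2|u|\,|v|+|v|^2\le 8R\mu+4\mu^2$.

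Finally I would sum this bound over the $\binom{p+1}{2}=\tfrac{p(p+1)}{2}$ pairs $0\le i<j\le p$ (the estimate for whatever sub-collection of pairs occurs in $\rho_p$ then follows a fortiori), obtaining
\[
\Bigl|\,\textstyle\sum_{i<j}|f_\lambda(\mathcal{\textbf{x}}^i)-f_\lambda(\mathcal{\textbf{x}}^j)|^2-\sum_{i<j}|\mathcal{\textbf{x}}^i-\mathcal{\textbf{x}}^j|^2\,\Bigr|\;\le\;\tfrac{p(p+1)}{2}\,(8R\mu+4\mu^2)\;=\;p(p+1)\bigl(4R\mu+2\mu^2\bigr)\;=\;\eta_p(\lambda),
\]
which is the first displayed inequality. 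For the ``in particular'' clause, set $A:=\rho_p(\mathcal{\textbf{x}}^0,\dots,\mathcal{\textbf{x}}^p)$ and $B:=\rho_p(f_\lambda(\mathcal{\textbf{x}}^0),\dots,f_\lambda(\mathcal{\textbf{x}}^p))$; from $|A-B|\le\eta_p(\lambda)$ one reads off simultaneously $A\le B+\eta_p(\lambda)$ and $B+\eta_p(\lambda)\le (A+\eta_p(\lambda))+\eta_p(\lambda)=A+2\eta_p(\lambda)$, which is exactly the stated chain of inequalities. There is no genuine obstacle here, the statement being a purely metric inequality; the only point that needs attention is the uniform bound $|f_\lambda(\mathcal{\textbf{x}}^i)|\le R$ (this is where closedness of the fibres $\TT_t$ inside $B_k(0,R)$ is used), and the constants in $\eta_p(\lambda)$ are calibrated precisely so that $\binom{p+1}{2}$ copies of the per-pair bound $8R\mu+4\mu^2$ add up to $\eta_p(\lambda)$.
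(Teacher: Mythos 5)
Your argument is correct, and the constants come out exactly right: the per-pair estimate $|\,|\mathbf{x}^i-\mathbf{x}^j|^2-|f_\lambda(\mathbf{x}^i)-f_\lambda(\mathbf{x}^j)|^2\,|\le 2|u||v|+|v|^2\le 8R\mu+4\mu^2$ summed over the $\binom{p+1}{2}$ pairs gives precisely $\eta_p(\lambda)$, and the ``in particular'' chain is indeed just the two readings of $|A-B|\le\eta_p(\lambda)$. Note that the paper itself does not prove this lemma but defers to Zell (\cite{Hausdorff}, Lemma 21), so your computation simply supplies the omitted argument, and it is the natural expansion-of-squares proof one would expect there; your preliminary observation that $\TT_\limit\subseteq\overline{B_k(0,R)}$ (needed for $|u|\le 2R$) and your remark that the bound holds a fortiori for the sub-collection of pairs appearing in $\rho_p$ are exactly the right points to make explicit. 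One small quibble: the bound $|f_\lambda(\mathbf{x}^i)|\le R$ uses only the containment $\TT_t\subseteq B_k(0,R)$ (hence $\overline{\TT}\subseteq\overline{B_k(0,R)}\times\mathbb{R}_{\ge 0}$), not the closedness of the fibres $\TT_t$, so the parenthetical attribution to closedness is inaccurate though harmless.
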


The next proposition follows immediately from Lemma \ref{lem:7},
Definition \ref{defncalJ}, and Definition \ref{defncalD}.

\begin{proposition}
\label{propINC}
For every $\lambda\in (0,\lambda_0)$ and $\ep>0$, we have
  $$ \mathcal{J}^p_{f_\lambda,\ep}(\TT_\lambda) \subseteq
  \mathcal{D}^p_{\ep+\eta_p(\lambda)} (\TT_\lambda) \subseteq
  \mathcal{J}^p_{f_\lambda,\ep+2 \eta_p (\lambda)}(\TT_\lambda).$$
\end{proposition}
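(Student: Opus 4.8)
The plan is to verify the chain of inclusions in Proposition \ref{propINC} directly from the defining formulas, using only Lemma \ref{lem:7} to control the discrepancy introduced by passing between the map $f_\lambda$ and the identity. Recall from Definitions \ref{defncalJ} and \ref{defncalD} that a point $(\mathcal{\textbf{x}}^0,\dots,\mathcal{\textbf{x}}^p,\textbf{t},\textbf{a})$ lies in $\mathcal{J}^p_{f_\lambda,\ep}(\TT_\lambda)$ iff it satisfies $\Omega^R \wedge \Theta_1^\ep \wedge \Theta_2^\Phi \wedge \Theta_3^{f_\lambda}$, and in $\mathcal{D}^p_{\ep'}(\TT_\lambda)$ iff it satisfies $\Omega^R \wedge \Theta_1^{\ep'} \wedge \Theta_2^\Phi \wedge \Upsilon$. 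The formulas $\Omega^R$, $\Theta_2^\Phi$ agree in both, so everything reduces to comparing the ``size'' constraints recorded by $\textbf{a}$: the subformula $\Theta_3^{f_\lambda}$ forces $A_{ij} = |f_\lambda(\X^i) - f_\lambda(\X^j)|^2$ whenever $T_i, T_j \neq 0$, whereas $\Upsilon$ forces $A_{ij} = |\X^i - \X^j|^2$ under the same condition, and the constraint $\sum_{i<j} A_{ij}^2 \leq (\text{parameter})$ appears in $\Theta_1^{(\cdot)}$.

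First I would fix a point $\textbf{z} = (\mathcal{\textbf{x}}^0,\dots,\mathcal{\textbf{x}}^p,\textbf{t},\textbf{a})$ and rewrite its membership conditions. Observe that since $\Theta_1$ (as opposed to $\Theta_1^\ep$) sets $\sum_{i<j}A_{ij}^2 = 0$ in the unthickened join, but here we are dealing throughout with thickened versions, I must be careful: in $\mathcal{D}^p_{\ep'}$ the subformula is $\Upsilon$ pinning $A_{ij}=|\X^i-\X^j|^2$ together with $\Theta_1$ forcing $\sum A_{ij}^2 = 0$ — but in fact Definition \ref{defncalD} uses $\Theta_1^\ep$ in its statement (the text says ``$\Omega^R,\Theta_1^\ep,\Theta_2^\Phi$ are defined as in Equation \ref{eqn:fibjoin2}''), so the constraint is $\sum_{1\le i<j\le p} A_{ij}^2 \le \ep'$. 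So for $\mathcal{D}^p_{\ep'}(\TT_\lambda)$ the relevant inequality, after substituting the values $A_{ij}=|\X^i-\X^j|^2$ forced whenever $T_iT_j\neq 0$, becomes essentially $\rho_p(f\text{-free version}) \le \ep'$, i.e. $\sum_{i<j,\,T_iT_j\neq 0}|\X^i-\X^j|^4$-type bound; I would match this up with the analogous bound in $\Theta_3^{f_\lambda} \wedge \Theta_1^\ep$, which is $\sum_{i<j,\,T_iT_j\neq 0}|f_\lambda(\X^i)-f_\lambda(\X^j)|^4 \le \ep$ together with the equalities $A_{ij}=|f_\lambda(\X^i)-f_\lambda(\X^j)|^2$. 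Then Lemma \ref{lem:7} supplies exactly $|\rho_p(\mathcal{\textbf{x}}) - \rho_p(f_\lambda(\mathcal{\textbf{x}}))| \le \eta_p(\lambda)$, so if $\textbf{z}\in \mathcal{J}^p_{f_\lambda,\ep}(\TT_\lambda)$ then $\rho_p(\mathcal{\textbf{x}}) \le \rho_p(f_\lambda(\mathcal{\textbf{x}})) + \eta_p(\lambda) \le \ep + \eta_p(\lambda)$, which is precisely the constraint placing (a suitable coordinate-adjusted version of) $\textbf{z}$ in $\mathcal{D}^p_{\ep+\eta_p(\lambda)}(\TT_\lambda)$; and conversely from $\rho_p(f_\lambda(\mathcal{\textbf{x}})) \le \rho_p(\mathcal{\textbf{x}}) + \eta_p(\lambda) \le (\ep+\eta_p(\lambda)) + \eta_p(\lambda) = \ep + 2\eta_p(\lambda)$ one gets the second inclusion $\mathcal{D}^p_{\ep+\eta_p(\lambda)}(\TT_\lambda)\subseteq \mathcal{J}^p_{f_\lambda,\ep+2\eta_p(\lambda)}(\TT_\lambda)$.

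The one genuinely delicate point — the main obstacle — is that the three sets carry different values of the $\textbf{a}$-coordinates: in $\mathcal{J}^p_{f_\lambda,\cdot}$ one has $A_{ij}=|f_\lambda(\X^i)-f_\lambda(\X^j)|^2$ while in $\mathcal{D}^p_\cdot$ one has $A_{ij}=|\X^i-\X^j|^2$, so the inclusions are not literal set inclusions on the same coordinates unless the $A_{ij}$ are ``free'' except through the single scalar constraint $\sum A_{ij}^2 \le (\text{parameter})$. I would therefore argue that, given a point of the left-hand set, one simply \emph{redefines} the $A_{ij}$ coordinates to the values forced by the target formula — this is legitimate because for indices with $T_iT_j\neq 0$ the target pins them, and for indices with $T_i=0$ or $T_j=0$ the subformula $\Upsilon$ (resp.\ $\Theta_3^{f_\lambda}$) imposes no constraint on $A_{ij}$ beyond those already handled, with the scalar sum $\sum_{i<j}A_{ij}^2$ only increasing or staying within the needed bound by Lemma \ref{lem:7}. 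Since $\Omega^R$ and $\Theta_2^\Phi$ do not involve $\textbf{a}$ at all, this re-coordinatization preserves them, and one checks $\Theta_1^{(\cdot)}$ with the enlarged parameter and the appropriate $\Upsilon$ or $\Theta_3^{f_\lambda}$ hold by the estimate. Writing out these substitutions carefully, indexing over the subsets $I\subset[0,p]$ with $t_i\neq 0$ for $i\in I$ exactly as in Notation \ref{not:simplex}, and invoking Lemma \ref{lem:7} at the single crucial inequality, completes the proof; no further ingredients are needed, which is why the paper records it as an immediate consequence.
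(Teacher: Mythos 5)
Your proposal follows exactly the route the paper takes: the paper's entire proof of this proposition is the sentence that it ``follows immediately from Lemma \ref{lem:7}, Definition \ref{defncalJ}, and Definition \ref{defncalD}'', i.e.\ precisely the unwinding you perform, with Lemma \ref{lem:7} invoked at the one inequality where you invoke it. So in substance you and the authors agree, and you have correctly isolated the only nontrivial point.

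The caveat concerns the device you use at that point. ``Redefining'' the $A_{ij}$-coordinates of a point produces a \emph{different} point of $\mathbb{R}^N$, so as written your argument constructs a natural map from one set into the other rather than proving the asserted inclusion of subsets of $\mathbb{R}^N$; an inclusion cannot be established by altering coordinates. Indeed, with the formulas taken verbatim (where $\Theta_3^{f_\lambda}$ pins $A_{ij}=|f_\lambda(\X^i)-f_\lambda(\X^j)|^2$ and $\Upsilon$ pins $A_{ij}=|\X^i-\X^j|^2$ whenever $T_iT_j\neq 0$) the first inclusion would already fail at any admissible pair with $|f_\lambda(\X^i)-f_\lambda(\X^j)|\neq|\X^i-\X^j|$, so the statement must be read with the convention the paper records just before Lemma \ref{lem:7}, namely the substitution $a_{ij}=|\X^i-\X^j|$ (unsquared), under which $\sum_{i<j}A_{ij}^2$ is exactly the sum $\rho_p$ over the indices with $t_i\neq 0$. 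With that reading, membership in each of the three sets is governed by the single scalar inequality $\rho_p(f_\lambda(\X^0),\dots,f_\lambda(\X^p))\leq\cdot$ (resp.\ $\rho_p(\X^0,\dots,\X^p)\leq\cdot$), Lemma \ref{lem:7} gives the sandwich at once, no re-coordinatization is needed, and the maps $i,j,k$ used later in the proof of Proposition \ref{prop:15} are honest inclusions. This reading also repairs the second slip in your chain: under the printed pinning $A_{ij}=|\cdot|^2$ the quantity $\sum A_{ij}^2$ is a sum of \emph{fourth} powers, which Lemma \ref{lem:7} (a statement about sums of squares) does not control, whereas you silently pass to $\rho_p$. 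Both difficulties are inherited from the imprecision of Definitions \ref{defncalJ} and \ref{defncalD} rather than introduced by you, but the ``redefine the coordinates'' step as you state it neither is needed under the intended reading nor suffices, by itself, to prove a set-theoretic inclusion, so it should be replaced by the observation above.
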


Let $\ep_1,\ep_2\in \mathbb{R}_+$ satisfy the conclusions of Proposition \ref{prop:7},
 Proposition \ref{prop:8}, respectively.  Set $\ep_0=\min\{\ep_1,\ep_2\}$.

\begin{proposition}
\label{prop:15}
For any $p\in
  \mathbb{N}$, there exist
$\lambda,\ep,\delta\in \mathbb{R}_+$ such that
$\ep\in (0,\ep_0)$, $\lambda\in (0,\lambda_0)$, and
 $$\mathcal{D}^p_\delta(\TT_\lambda) \simeq
  \mathcal{J}^p_{f_\lambda,\ep}(\TT_\lambda).$$
\end{proposition}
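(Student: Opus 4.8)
The plan is to chain together the three homotopy-equivalence results already established—Proposition \ref{prop:7}, Proposition \ref{prop:8}, and Proposition \ref{prop:13}—using the sandwiching inclusions of Proposition \ref{propINC} to convert between the fibered join and the thickened diagonal. Fix $p$, and let $\lambda_0,\lambda_1,\ep_0,\delta_0,\delta_1,\eta_p$ be the quantities produced by the earlier results (with $\ep_0 = \min\{\ep_1,\ep_2\}$ as set above, and $\lambda_1\le\lambda_0$ from Proposition \ref{prop:13}). First I would choose $\ep\in(0,\ep_0)$ and then shrink $\lambda$. The governing constraints are: (i) $\eta_p(\lambda)$ must be small enough that $\ep$, $\ep+\eta_p(\lambda)$, and $\ep+2\eta_p(\lambda)$ all still lie in $(0,\ep_0)$, so that Proposition \ref{prop:7} and Proposition \ref{prop:8} apply and the inclusions $\mathcal{J}^p_{f_\lambda,\ep}(\TT_\lambda)\hookrightarrow \mathcal{J}^p_{f_\lambda,\ep+2\eta_p(\lambda)}(\TT_\lambda)$ are homotopy equivalences; and (ii) $\delta := \ep+\eta_p(\lambda)$ must fall in the window $(\delta_0(\lambda),\delta_1(\lambda))$ where Proposition \ref{prop:13} guarantees that the inclusions among the $\mathcal{D}^p_\delta(\TT_\lambda)$ are homotopy equivalences. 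Since $\lim_{\lambda\to 0}\eta_p(\lambda)=0$ (noted after the definition of $\eta_p$), $\lim_{\lambda\to 0}\delta_0(\lambda)=0$, and $\lim_{\lambda\to 0}\delta_1(\lambda)\neq 0$, all of these can be arranged simultaneously for $\lambda$ small: pick $\ep$ strictly between $0$ and (half of) $\ep_0$ and also strictly below $\liminf_{\lambda\to 0}\delta_1(\lambda)$, then take $\lambda$ so small that $2\eta_p(\lambda) < \min\{\ep_0-\ep,\; \delta_1(\lambda)-\ep\}$ and $\delta_0(\lambda) < \ep$; set $\delta = \ep+\eta_p(\lambda)$.

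With these choices, Proposition \ref{propINC} gives the chain
$$
\mathcal{J}^p_{f_\lambda,\ep}(\TT_\lambda) \;\subseteq\;
\mathcal{D}^p_{\delta}(\TT_\lambda) \;\subseteq\;
\mathcal{J}^p_{f_\lambda,\ep+2\eta_p(\lambda)}(\TT_\lambda).
$$
By Proposition \ref{prop:8} (applied with $\ep\le\ep' = \ep+2\eta_p(\lambda)$, both in $(0,\ep_0)$) the composite inclusion $\mathcal{J}^p_{f_\lambda,\ep}(\TT_\lambda)\hookrightarrow \mathcal{J}^p_{f_\lambda,\ep+2\eta_p(\lambda)}(\TT_\lambda)$ is a homotopy equivalence. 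A standard two-out-of-three / retract argument for a sandwich $Y_1\subseteq Z\subseteq Y_2$ in which $Y_1\hookrightarrow Y_2$ is a homotopy equivalence then shows that $\mathcal{J}^p_{f_\lambda,\ep}(\TT_\lambda)\hookrightarrow \mathcal{D}^p_\delta(\TT_\lambda)$ is a homotopy equivalence as well: on homotopy groups the inclusion $Y_1\hookrightarrow Z$ is injective (since $Y_1\hookrightarrow Y_2$ is injective) and the inclusion $Z\hookrightarrow Y_2$ is surjective, while their composite is an isomorphism, forcing both to be isomorphisms; since all the sets involved are closed and bounded semi-algebraic, hence homotopy equivalent to finite CW-complexes, Whitehead's theorem upgrades this to a homotopy equivalence. (One should also observe, for the sandwich argument to be clean, that the outer inclusion into the larger join can itself be factored through $\mathcal{D}^p_{\delta'}(\TT_\lambda)$ for a slightly larger $\delta'$ still in the Proposition \ref{prop:13} window if one wishes to route entirely through diagonals; this is not strictly necessary but makes the homotopy-group bookkeeping symmetric.)

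The one genuinely delicate point—and the step I expect to be the main obstacle—is the simultaneous satisfiability of the constraints on $(\lambda,\ep,\delta)$, specifically reconciling the "join window" $(0,\ep_0)$ coming from Propositions \ref{prop:7} and \ref{prop:8} with the "diagonal window" $(\delta_0(\lambda),\delta_1(\lambda))$ of Proposition \ref{prop:13}, given that $\delta = \ep+\eta_p(\lambda)$ is pinned down by $\ep$ and $\lambda$ together while $\eta_p(\lambda)$ is not something we control independently. The resolution is exactly the asymptotics recorded after the definition of $\eta_p$ and in clause (2) of Proposition \ref{prop:13}: $\delta_0(\lambda)\to 0$ and $\eta_p(\lambda)\to 0$ while $\delta_1(\lambda)$ stays bounded away from $0$, so for any fixed small $\ep$ (chosen once, below both $\ep_0$ and $\liminf\delta_1$) every constraint becomes an open condition on $\lambda$ that holds for all sufficiently small $\lambda>0$. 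Everything else is routine application of the cited propositions.
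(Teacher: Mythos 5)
There is a genuine gap at the key homotopy-theoretic step. Your sandwich has only three terms, $\mathcal{J}^p_{f_\lambda,\ep}(\TT_\lambda)\subseteq \mathcal{D}^p_{\delta}(\TT_\lambda)\subseteq \mathcal{J}^p_{f_\lambda,\ep+2\eta_p(\lambda)}(\TT_\lambda)$, and you claim that because the composite inclusion of the two joins is a homotopy equivalence, a ``two-out-of-three / retract'' argument forces the first inclusion to be one as well. That inference is false: if $f_*:\boldsymbol{\pi}_i(Y_1)\to\boldsymbol{\pi}_i(Z)$ and $g_*:\boldsymbol{\pi}_i(Z)\to\boldsymbol{\pi}_i(Y_2)$ have $g_*\circ f_*$ an isomorphism, you only get $f_*$ injective and $g_*$ surjective, not that either is an isomorphism (consider $Y_1=Y_2$ a point and $Z=\Sphere^1$, i.e.\ $0\to\mathbb{Z}\to 0$; geometrically, a segment inside an annulus inside a disk). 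Nothing in Propositions \ref{prop:7}, \ref{prop:8} or \ref{propINC} gives you a retraction of $\mathcal{D}^p_\delta(\TT_\lambda)$ onto the join, so the middle space could a priori carry extra homotopy that the outer composite kills. Moreover, the parenthetical remark you dismiss as ``not strictly necessary'' is exactly the missing ingredient: to control the middle map you need a \emph{four}-term chain in which \emph{both} consecutive composites are known equivalences, one supplied by Proposition \ref{prop:13} (nearby thickened diagonals) and one by Proposition \ref{prop:8} (nearby thickened joins). In your three-term version Proposition \ref{prop:13} is only used to place $\delta$ in a window, but its homotopy-equivalence conclusion is never actually invoked, and without it the argument does not close.

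For comparison, the paper's proof chooses $\lambda$ so small that $2\eta_p(\lambda)<\frac{\delta_1(\lambda)-\delta_0(\lambda)}{2}$ and $\delta_0(\lambda)+4\eta_p(\lambda)<\ep_0$, sets $\delta'=\delta_0+\eta_p(\lambda)$, $\ep=\delta_0+2\eta_p(\lambda)$, $\delta=\delta_0+3\eta_p(\lambda)$, $\ep'=\delta_0+4\eta_p(\lambda)$, and uses Proposition \ref{propINC} twice to obtain
\begin{equation*}
\mathcal{D}^p_{\delta'}(\TT_\lambda)\overset{i}{\hookrightarrow}
\mathcal{J}^p_{f_\lambda,\ep}(\TT_\lambda)\overset{j}{\hookrightarrow}
\mathcal{D}^p_{\delta}(\TT_\lambda)\overset{k}{\hookrightarrow}
\mathcal{J}^p_{f_\lambda,\ep'}(\TT_\lambda),
\end{equation*}
where $j\circ i$ is an equivalence by Proposition \ref{prop:13} and $k\circ j$ by Proposition \ref{prop:8}; then $j_*$ is surjective from the first composite and injective from the second, hence an isomorphism, and Whitehead's theorem (finite CW structure) upgrades $j$ to a homotopy equivalence. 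Your parameter bookkeeping (fix $\ep$ first, shrink $\lambda$, set $\delta=\ep+\eta_p(\lambda)$) is fine and the constraints are indeed simultaneously satisfiable as you argue, but to repair the proof you must add the fourth term (either an outer $\mathcal{D}^p_{\delta''}$ with $\delta''$ still in the Proposition \ref{prop:13} window, or an inner one as in the paper) and run the surjective-plus-injective argument on the middle inclusion; that step is essential, not optional.
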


\proof We first describe how to choose
$\ep,\ep'\in (0,\ep_0)$, $\lambda\in (0,\lambda_0)$ and $\delta,\delta' \in
(\delta_0(\lambda),\delta_1(\lambda))$
(cf. Proposition \ref{prop:13}) so that $$
\mathcal{D}^p_{\delta'}(\TT_\lambda) \subseteq \mathcal{J}^p_{f_\lambda,\ep}(\TT_\lambda)
\overset{\ast}{\subseteq} \mathcal{D}^p_{\delta}(\TT_\lambda)\subseteq
\mathcal{J}^p_{f_\lambda,\ep'}(\TT_\lambda),$$ and secondly we show
that,
with these choices,
the inclusion $(\ast)$ induces a homotopy equivalence.

Since the limit of $\delta_1(\lambda)-\delta_0(\lambda)$ is not zero
for $0<\lambda<
\lambda_1\leq \lambda_0
$ and $\lambda$ tending to zero, while the
limits of $\eta_p(\lambda)$ and $\delta_0(\lambda)$ are zero
(by  Proposition \ref{prop:13}, Proposition \ref{prop:12} A), we can
choose $0<\lambda<\lambda_0$ which simultaneously satisfies
$$2\eta_p(\lambda)<\fraction{\delta_1(\lambda)-\delta_0(\lambda)}{2} \qquad
\text{and} \qquad \delta_0(\lambda)+4\eta_p(\lambda)<\ep_0.$$

Set
$\delta'=\delta_0+\eta_p(\lambda)$,
$\ep=\delta_0+2\eta_p(\lambda)$,
$\delta=\delta_0+3\eta_p(\lambda)$, and $\ep'=\delta_0+4\eta_p(\lambda)$.
From Proposition \ref{propINC} we have the following inclusions,
$$
\mathcal{D}^p_{\delta'}(\TT_\lambda) \overset{i}{\hookrightarrow}
\mathcal{J}^p_{f_\lambda,\ep}(\TT_\lambda) \overset{j}{\hookrightarrow}
\mathcal{D}^p_{\delta}(\TT_\lambda)\overset{k}{\hookrightarrow}
\mathcal{J}^p_{f_\lambda,\ep'}(\TT_\lambda).$$

Furthermore,
it is easy to see that $\delta,
\delta'\in (\delta_0(\lambda),\delta_1(\lambda))$ and that
$\ep,\ep'\in (0,\ep_0)$, and so we have that both $j\circ i$ and
$k\circ j$ induce semi-algebraic homotopy equivalences (Proposition \ref{prop:13},
Proposition \ref{prop:8} resp.).

For each $\textbf{z} \in \mathcal{D}^p_{\delta'}(\TT_\lambda)$ we have the following
diagram between the homotopy groups.

$$
\xymatrix{ \boldsymbol{\pi}_\ast(\mathcal{D}^p_{\delta'}(\TT_\lambda),\textbf{z})
\ar[rr]^{(j\circ i )_\ast}_{\cong} \ar[rd]^{i_\ast} & &
 \boldsymbol{\pi}_\ast(\mathcal{D}^p_{\delta}(\TT_\lambda), \textbf{z}) \ar[rd]^{k_\ast} & \\ &
 \boldsymbol{\pi}_\ast(\mathcal{J}^p_{f_\lambda,\ep}(\TT_\lambda),\textbf{z}) \ar[rr]^{(k\circ j)_\ast}_{\cong}
 \ar[ru]^{j_\ast} & & \boldsymbol{\pi}_\ast(\mathcal{J}^p_{f_\lambda,\ep'}(\TT_\lambda),
\textbf{z})}
$$
where we have identified $\textbf{z}$ with its images under the various
inclusion maps.

Since $(j\circ i)_\ast=j_\ast \circ i_\ast$, the surjectivity of
$(j\circ i)_\ast$ implies that $j_\ast$ is surjective, and similarly
$(k\circ j)_\ast$ is injective
ensures that $j_\ast$ is injective.
Hence, $j_\ast$ is an isomorphism as required.

This implies that
the inclusion map
$\mathcal{J}^p_{f_\lambda,\ep}(\TT_\lambda)\overset{j}{\hookrightarrow} \mathcal{D}^p_\delta(\TT_\lambda)$
 is a
weak homotopy equivalence
(see \cite[page 181]{Whitehead}). Since
both spaces have the structure of a finite
CW-complex, every weak equivalence is in fact a homotopy equivalence
(\cite[Theorem 3.5, p. 220]{Whitehead}).
\zz

\end{subsubsection}

We now prove Proposition \ref{prop:main}.
\proof [Proof of Proposition \ref{prop:main}]
Let
$\TT\subset \mathbb{R}^k \times \mathbb{R}_+$
such that $\TT_\lambda$ is closed and $\TT_\lambda\subset B_k(0,R)$ for some $R\in \mathbb{R}$
and all
$\lambda\in \mathbb{R}_+$.
Applying Proposition \ref{prop:15},
we have that there exist $\lambda\in
(0,\lambda_0)$ and $\ep\in (0,\ep_0)$
such that
 the sets $\mathcal{D}^p_\delta(\TT_\lambda)\simeq
\mathcal{J}^p_{f_\lambda,\ep}(\TT_\lambda)$ are
semi-algebraically
homotopy equivalent.  Also, by
Proposition \ref{prop:7} the sets
$\mathcal{J}^p_{f_\lambda,\ep}(\TT_\lambda)\simeq \mathcal{J}^p_{f_\lambda}(\TT_\lambda)$ are
semi-algebraically homotopy equivalent.
By Proposition \ref{prop:5} and Proposition \ref{prop:12} the map
$J(f_\lambda):\mathcal{J}^p_{f_\lambda}(\TT_\lambda)\twoheadrightarrow \TT_0$ induces a
$p$-equivalence.

Thus we have the following sequence of homotopy equivalences and
$p$-equivalence.
\begin{equation}{\label{eqn:mainresult}}
\mathcal{D}^p_{\delta}(\TT_\lambda)\simeq
\mathcal{J}^p_{f_\lambda,\ep}(\TT_\lambda)
\simeq\mathcal{J}^p_{f_\lambda}(\TT_\lambda)
\halfspace {{\widetilde{\to\halfbackspace \succ}_p}}\halfspace \TT_{\limit}\end{equation}

The first homotopy equivalence follows from Proposition \ref{prop:15},
the second from Proposition \ref{prop:7}, and the last $p$-equivalence
is a consequence of Propositions \ref{prop:5} and \ref{prop:12}.
The bound on the format
of the
formula defining $\mathcal{D}^p:=\mathcal{D}^p_\delta(\TT_\lambda)$ follows from
Proposition \ref{prop:9}.  This finishes the proof.
\zz

\proof[Proof of Theorem \ref{thm:main_weak}]
The theorem follows directly from Proposition \ref{prop:main},
Theorem \ref{thm:additive} and
Proposition \ref{prop:top_basic} after choosing $p=k+1$.
\zz

\begin{section}
{Proofs of Theorem \ref{thm:main1} and Theorem \ref{thm:main}
\label{sec:main1}}

\subsection{Algebraic preliminaries}
We start
with a lemma
that provides a slightly different characterization of
additive complexity from that given in Definition
\ref{def:rational_additive}.
Roughly speaking the lemma states that any given
 additive representation of a given polynomial $P$ can be
modified without changing its length to another  additive
representation of $P$ in which any negative exponents occur only
in the very last step. This simplification will be very useful in
what follows.

\begin{lemma}\cite[page 152]{Dries}
\label{lem:equivalence}
For any $P\in \mathbb{R}[X_1,\ldots,X_k]$ and $a\in \mathbb{N}$ we have
  $P$ has  additive complexity at most $a$ if and only if
there exists a sequence of equations (*)
\begin{itemize}
\item[(i)] $Q_1=u_1X_{1}^{\alpha_{11}} \cdots X_{k}^{\alpha_{1k}} +
  v_1X_{1}^{\beta_{11}} \cdots X_{k}^{\beta_{1k}}$,\\ where $u_1, v_1
  \in \mathbb{R}$, and $\alpha_{11}, \ldots ,\alpha_{1k}, \beta_{11}, \ldots ,
  \beta_{1k} \in \N$;

\item[(ii)] $Q_j=u_jX_{1}^{\alpha_{j1}} \cdots X_{k}^{\alpha_{j k}}
  \prod_{1 \le i \le j-1}Q_{i}^{\gamma_{j i}} + v_jX_{1}^{\beta_{j1}}
  \cdots X_{k}^{\beta_{j k}}\prod_{1 \le i \le
    j-1}Q_{i}^{\delta_{ji}}$,\\ where $1 < j \le a$, $u_j, v_j \in
  \mathbb{R}$, and $\alpha_{j1}, \ldots ,\alpha_{j k}, \beta_{j1}, \ldots ,
  \beta_{j k}, \gamma_{ji}, \delta_{ji} \in \N$ for $1 \le i <j$;

\item[(iii)] $P= cX_{1}^{\zeta_{1}} \cdots X_{k}^{\zeta_{k}}\prod_{1
  \le j \le a}Q_{j}^{\eta_{j}}$,\\ where $c \in \mathbb{R}$, and $\zeta_1,
  \ldots , \zeta_k, \eta_1, \ldots ,\eta_a \in \mathbb{Z}$.
\end{itemize}
\end{lemma}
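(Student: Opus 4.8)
The plan is to prove Lemma \ref{lem:equivalence} by induction on $a$, showing that the only genuine restriction on the exponents of an additive representation is that negative exponents in the intermediate steps can always be ``pushed forward'' into later steps and ultimately into the final monomial factor in (iii). The nontrivial direction is to go from an arbitrary additive representation (with $\Z$-exponents everywhere, as in Definition \ref{def:rational_additive}) to one of the special form (*) where all exponents in (i) and (ii) lie in $\N$ and only (iii) is allowed $\Z$-exponents; the converse is immediate since (*) is a special case of Definition \ref{def:rational_additive}.

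First I would observe that for any rational function $Q_j$ appearing in the representation, one may write $Q_j = X_1^{-m_{j1}}\cdots X_k^{-m_{jk}} \widetilde{Q}_j$, where the $m_{ji}\in \N$ are chosen minimally so that each $\widetilde{Q}_j \in \mathbb{R}[X_1,\ldots,X_k]$ is a genuine polynomial not divisible by any $X_i$; this is just clearing denominators, and crucially the monomial prefactor is a Laurent monomial, i.e.\ a unit in $\mathbb{R}[X_1^{\pm1},\ldots,X_k^{\pm1}]$. I would then process the equations $j = 1, 2, \ldots, a$ in order. Substituting the expression $Q_i = (\text{monomial}) \cdot \widetilde{Q}_i$ for all $i < j$ into equation (ii) for $Q_j$, one sees that both the positive and the negative term on the right-hand side acquire an extra Laurent-monomial factor; factoring out the common Laurent monomial (the gcd of the two Laurent monomials, in the obvious sense, allowing negative exponents) from the whole right-hand side, one is left with an equation of the form $\widetilde{Q}_j = (\text{Laurent monomial})\cdot[\,u_j (\text{monomial})\prod \widetilde{Q}_i^{\gamma_{ji}} + v_j (\text{monomial})\prod \widetilde{Q}_i^{\delta_{ji}}\,]$. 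After absorbing the outer Laurent monomial into the definition of $\widetilde Q_j$ (which only changes $\widetilde Q_j$ by a Laurent-monomial unit, harmless for the induction), the two monomial prefactors inside the bracket can be taken to have \emph{nonnegative} exponents, since we have divided out their common part; and the exponents $\gamma_{ji},\delta_{ji}$ on the $\widetilde Q_i$ were nonnegative to begin with. Thus after relabelling we have obtained equations (i), (ii) with all exponents in $\N$. Finally, feeding these substitutions into (iii) collects all the accumulated negative exponents (from the $X_1^{-m_{j1}}\cdots X_k^{-m_{jk}}$ factors and from the Laurent units absorbed along the way) into a single monomial with $\Z$-exponents multiplying $c\prod \widetilde Q_j^{\eta_j}$, which is exactly the form required in (iii).

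The main obstacle I anticipate is bookkeeping: one has to be careful that ``dividing out the common Laurent monomial'' is well-defined and that the resulting exponents really are nonnegative, and that absorbing Laurent units into the $\widetilde Q_j$ at step $j$ does not retroactively spoil the nonnegativity already achieved at steps $<j$ --- this is why processing the equations in increasing order of $j$, and only ever modifying $\widetilde Q_j$ at step $j$, is essential. A secondary subtlety is that the $\widetilde Q_j$ produced this way are polynomials but the original $Q_j$ were rational functions, so one should check that nothing forces a denominator to reappear; the point is that every operation is a multiplication by a Laurent monomial, and the set of such monomials is closed under the gcd operation used. Since the statement is cited from \cite[page 152]{Dries} this is, in any case, a known normalization result, and the argument above is essentially the standard one for reducing a straight-line program with divisions to one in which divisions are deferred to the output.

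Once this normal form is established, the payoff in the sequel is clear: in the representation (*), the polynomials $Q_1,\ldots,Q_a$ all have \emph{division-free} additive complexity controlled by $a$, and writing $P = cX_1^{\zeta_1}\cdots X_k^{\zeta_k}\prod_j Q_j^{\eta_j}$ with $\zeta_i,\eta_j \in \Z$ lets one split $P = P'/Q'$ by collecting the factors with positive exponents into $P'$ and those with negative exponents into $Q'$, with the sum of the division-free additive complexities of $P'$ and $Q'$ bounded in terms of $a$ --- which is exactly the input needed for Lemma \ref{lem:equivalence}'s intended application to Proposition \ref{prop:sectionfivemain} and ultimately to Theorem \ref{thm:main1}.
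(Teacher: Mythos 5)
The paper itself gives no proof of this lemma (it is cited from van den Dries), so your attempt stands on its own; and it has a genuine gap at its very first step. You assert that every rational function $Q_j$ occurring in an additive representation can be written as $X_1^{-m_{j1}}\cdots X_k^{-m_{jk}}\widetilde Q_j$ with $\widetilde Q_j$ a polynomial, i.e.\ that denominators are monomials, and later that ``the exponents $\gamma_{ji},\delta_{ji}$ on the $\widetilde Q_i$ were nonnegative to begin with.'' Neither holds under Definition \ref{def:rational_additive}: there \emph{all} exponents, including the $\gamma_{ji},\delta_{ji}$ on the previous $Q_i$, lie in $\mathbb{Z}$. For instance $Q_1=X+1$, $Q_2=Q_1^{-1}+1=(X+2)/(X+1)$ is a legal second step, and $Q_2$ is not a Laurent monomial times a polynomial. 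Your argument therefore only treats the easy special case in which the negative exponents sit on the variables alone (monomial denominators), which is precisely not the content of the lemma; the whole difficulty is to defer divisions by the non-monomial intermediate quantities $Q_i$ to the final line (iii) without increasing the number of additions.

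The argument can be repaired by strengthening the induction hypothesis in the direction your bookkeeping already hints at: construct polynomials $\widetilde Q_1,\ldots,\widetilde Q_a$ of the form (i)--(ii), together with an expression of each \emph{original} $Q_j$ as a generalized Laurent monomial $c_j X_1^{m_{j1}}\cdots X_k^{m_{jk}}\prod_{i\le j}\widetilde Q_i^{e_{ji}}$ with $m_{ji}, e_{ji}\in\mathbb{Z}$ --- that is, allow integer powers of the previously constructed polynomials, not only of the variables. Substituting these expressions into the defining equation for $Q_{j+1}$ turns each of its two terms into such a generalized monomial; factoring out the componentwise minimum of the two exponent vectors (over the $X_i$ and the $\widetilde Q_i$ alike) leaves a sum of two products with exponents in $\mathbb{N}$, which you take as $\widetilde Q_{j+1}$, while the extracted factor, possibly with negative exponents, is recorded in the generalized-monomial expression for $Q_{j+1}$. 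After $a$ steps, substituting into (iii) of the original representation expresses $P$ as $cX_1^{\zeta_1}\cdots X_k^{\zeta_k}\prod_j \widetilde Q_j^{\eta_j}$ with $\zeta_i,\eta_j\in\mathbb{Z}$, which is exactly (*); the converse direction is, as you say, immediate since (*) is a special case of Definition \ref{def:rational_additive}. With this correction the number of additions is visibly unchanged, which is what the intended application (writing $P$ as a quotient of two polynomials whose division-free additive complexities sum to at most $a$) requires.
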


\begin{remark}
Observe that in Lemma \ref{lem:equivalence}
all exponents other than those in line (iii) are in $\mathbb{N}$ rather
than in $\mathbb{Z}$ (cf. Definition~\ref{def:rational_additive}).
Observe also that if
a polynomial $P$ satisfies the conditions of the lemma, then
it has  additive complexity at most $a$.
\end{remark}

\newcommand{\hatP}{\widehat{P}}
\newcommand{\hatQ}{\widehat{Q}}

\begin{subsection}{The algebraic case}
Before proving Theorem \ref{thm:main1} it is useful to first consider the
algebraic case separately, since the main technical ingredients
used in the proof of Theorem \ref{thm:main1} are more clearly
visible in this case. With this in mind, in this section we consider the
algebraic case and prove the following theorem, deferring the proof in the
general semi-algebraic case till the next section.

\begin{theorem}{
\label{thm:algebraic}}
 The number of
distinct
homotopy types of
  $\Zer(F,\mathbb{R}^k)$ amongst all polynomials $F\in \mathbb{R}[X_1,\dots,X_k]$ having
    additive complexity at most $a$ does not exceed
$$ 2^{O(k(k^2+a))^8} = 
2^{(k+a)^{O(1)}}.$$
\end{theorem}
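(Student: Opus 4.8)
The plan is to reduce the problem to the weak version of the main theorem (Theorem \ref{thm:main_weak}) by realizing the real zero set of a polynomial of bounded additive complexity, intersected with a fixed ball, as the Hausdorff limit of a one-parameter semi-algebraic family defined by polynomials of bounded \emph{division-free} additive complexity. First, given $F\in\mathbb{R}[X_1,\dots,X_k]$ of additive complexity at most $a$, apply Lemma \ref{lem:equivalence} to obtain an additive representation in which all negative exponents appear only in line (iii). Collecting the positive and negative powers of the intermediate $Q_j$'s separately in the final monomial, this yields an expression $F = \frac{P}{Q}$ where $P,Q\in\mathbb{R}[X_1,\dots,X_k]$ each have division-free additive complexity bounded by $a$, and in fact the \emph{sum} of the division-free additive complexities of $P$ and $Q$ is at most $a$ (this is the content of the forthcoming Lemma \ref{lem:equivalence} together with its intended use, as announced in the introduction). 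Since $\Zer(F,\mathbb{R}^k)$ and $\Zer(P,\mathbb{R}^k)\setminus\Zer(Q,\mathbb{R}^k)$ agree away from the locus $Q=0$, and since for homotopy type one may restrict to a large closed ball $\overline{B_k(0,R)}$ (by local conical structure at infinity, the homotopy type of $\Zer(F,\mathbb{R}^k)$ is that of $\Zer(F,\mathbb{R}^k)\cap \overline{B_k(0,R)}$ for $R$ large), it suffices to control $\Zer(F,\mathbb{R}^k)\cap \overline{B_k(0,R)}$ up to homotopy.

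Next I would write $\Zer(F,\mathbb{R}^k)\cap\overline{B_k(0,R)}$ as a Hausdorff limit: consider the semi-algebraic family
\[
\TT = \Big\{(\mathcal{\textbf{x}},t)\in \overline{B_k(0,R)}\times\mathbb{R}_+ \;\Big|\; P(\mathcal{\textbf{x}})^2 \leq t^2\, Q(\mathcal{\textbf{x}})^2\Big\},
\]
possibly with a small additional perturbation so that each $\TT_t$ is closed and the relevant monotonicity $\TT_t\subseteq\TT_{t'}$ for $t\leq t'$ holds on an interval $(0,\lambda)$ (see Proposition \ref{prop:sectionfivemain} referenced in the introduction). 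One checks that $\TT_{\limit}$, in the sense of Notation \ref{not:limit}, equals $\Zer(F,\mathbb{R}^k)\cap\overline{B_k(0,R)}$: indeed a point $\mathcal{\textbf{x}}$ with $Q(\mathcal{\textbf{x}})\neq 0$ lies in $\TT_{\limit}$ iff $P(\mathcal{\textbf{x}})=0$, while the points with $Q(\mathcal{\textbf{x}})=0$ need to be handled by the more careful construction of Section 5 (this is exactly why the paper defers to Proposition \ref{prop:sectionfivemain} and Example \ref{eg:main}). The crucial point is that the formula defining $\TT$ uses only the polynomials $P$, $Q$, the variable $t$, and the quadratic form $|\mathcal{\textbf{x}}|^2 - R^2$, so its division-free additive format is bounded by $(a + O(1), k+1)$ — in particular the introduction of the parameter $t$ costs nothing in additive complexity, since $t^2Q^2$ is a single multiplication.

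Finally, apply Theorem \ref{thm:main_weak} with $k$ replaced by $k$ and additive parameter $O(k^2+a)$ (the $k^2$ accounting for the ambient quadratic constraints and the bookkeeping polynomials in the construction of Section 5): the Hausdorff limit $\TT_{\limit}$, and hence $\Zer(F,\mathbb{R}^k)$, is homotopy equivalent to one of a collection $\mathcal{S}_{k,O(k^2+a)}$ of semi-algebraic sets of cardinality $2^{O(k(k^2+a))^8} = 2^{(k+a)^{O(1)}}$, which is the claimed bound. The main obstacle, and the step requiring genuine care rather than bookkeeping, is the passage through the zero locus $Q=0$: one must choose the one-parameter family so that its Hausdorff limit is \emph{exactly} $\Zer(F,\mathbb{R}^k)$ restricted to the ball and not some larger set picking up spurious components of $\Zer(Q,\mathbb{R}^k)$, while simultaneously keeping each fiber closed and the family monotone — this is precisely what Proposition \ref{prop:sectionfivemain} is designed to provide, and invoking it correctly (with the right bound on the division-free additive format of the defining formula) is the heart of the argument.
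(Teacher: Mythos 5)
Your proposal follows the paper's route: Lemma \ref{lem:equivalence} to write $FQ=P$ with the division-free additive data controlled, reduction to a closed ball $\overline{B_k(0,R)}$ via conical structure at infinity, realization of $\Zer(F,\mathbb{R}^k)\cap\overline{B_k(0,R)}$ as $\TT_{\limit}$ for a one-parameter family of bounded division-free additive format, and then an application of Theorem \ref{thm:main_weak} with $p=k+1$ implicit. However, the one step where you are explicit is wrong as written: for your family $\TT=\{(\mathbf{x},t)\;:\;P(\mathbf{x})^2\le t^2Q(\mathbf{x})^2,\ |\mathbf{x}|\le R\}$ one has $\TT_{\limit}=\Zer(P,\mathbb{R}^k)\cap\overline{B_k(0,R)}=(\Zer(F,\mathbb{R}^k)\cup\Zer(Q,\mathbb{R}^k))\cap\overline{B_k(0,R)}$, because $P=FQ$ vanishes on $\Zer(Q,\mathbb{R}^k)$, so every such point lies in $\TT_t$ for \emph{all} $t>0$. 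Thus the spurious components are genuinely present, and no ``small additional perturbation'' ensuring closedness or monotonicity of the fibers removes them (monotonicity is in any case not needed: Theorem \ref{thm:main_weak} only requires $\TT$ bounded and each $\TT_t$ closed). The actual construction in Proposition \ref{prop:sectionfivemain} is $P^2(\mathbf{x})\le t\,(Q^2(\mathbf{x})-t^N)$ with $N=2\deg(Q)+1$: the constraint forces $Q^2>t^N$ on each fiber, whence $0\le F^2<t$ there, which excludes $\Zer(Q)\setminus\Zer(F)$ from the limit, while a generic-line multiplicity comparison ($2\,\mathrm{mult}_0(\widehat{Q})+1<N+1$) shows that points of $\Zer(F)\cap\Zer(Q)$ are still reached. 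Since you explicitly defer exactly this step to Proposition \ref{prop:sectionfivemain}, which precedes the theorem and may legitimately be cited, your argument does go through as the paper's does; just be aware that the proposition is the substantive content you are outsourcing, not a routine adjustment of your family.

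Two minor bookkeeping points. The division-free additive format of the family is $(a+k+2,k+1)$ rather than $(a+O(1),k+1)$, since $|\mathbf{x}|^2\le R^2$ alone costs $k$ additions; this does not affect the asymptotic bound $2^{O(k(k^2+a))^8}$. Also, the paper inserts an extra step (van den Dries' finiteness of homeomorphism types) to fix one value of $R$ uniformly over all $F$; your per-polynomial choice of $R$ is harmless here, since the collection $\mathcal{S}_{k,a+k+2}$ furnished by Theorem \ref{thm:main_weak} does not depend on $R$.
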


Before proving Theorem \ref{thm:algebraic} we need a few preliminary results.

\begin{proposition}{\label{prop:sectionfivemain}}
Let $F,P,Q\in \mathbb{R}[\X]$
such that $FQ=P$,
$R \in \mathbb{R}_+$, and define
\begin{equation}{\label{eqn:tee}}
\TT:=\{(\emph{\textbf{x}},t)\in
\mathbb{R}^k \times \mathbb{R}_+ | \; 
P^2(\emph{\textbf{x}}) \leq t(Q^2(\emph{\textbf{x}})-t^N) \wedge 
|\emph{\textbf{x}}|^2\leq R^2
\},
\end{equation}
where $N = 2 \deg(Q)+1$.
Then, using Notation \ref{not:limit}
\[
\TT_{\limit}=\Zer(F,\mathbb{R}^k) \cap \overline{B_k(0,R)}.
\]
\end{proposition}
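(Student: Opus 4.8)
The goal is to show that the Hausdorff limit of the family $\TT$ at $t=0$ equals the zero set of $F$ inside the closed ball. The plan is to characterize both sides as subsets of $\overline{B_k(0,R)}$ and show they coincide pointwise by analyzing the defining inequality $P^2(\mathbf{x}) \leq t(Q^2(\mathbf{x}) - t^N)$ as $t \to 0^+$. First I would recall (from Notation \ref{not:limit} together with Lemma \ref{lem:prop7a}, whose hypotheses should be checked to apply, or directly from the definition) that $\TT_{\limit} = \pi_{[1,k]}(\overline{\TT} \cap \pi_{k+1}^{-1}(0))$, so a point $\mathbf{x} \in \overline{B_k(0,R)}$ lies in $\TT_{\limit}$ if and only if there is a sequence $(\mathbf{x}_n, t_n) \in \TT$ with $t_n \to 0^+$ and $\mathbf{x}_n \to \mathbf{x}$.

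**The inclusion $\Zer(F,\mathbb{R}^k) \cap \overline{B_k(0,R)} \subseteq \TT_{\limit}$.** Suppose $F(\mathbf{x}) = 0$ and $|\mathbf{x}|^2 \leq R^2$. Then $P(\mathbf{x}) = F(\mathbf{x})Q(\mathbf{x}) = 0$, so the defining inequality becomes $0 \leq t(Q^2(\mathbf{x}) - t^N)$. For this I would take $\mathbf{x}_n = \mathbf{x}$ fixed and choose $t_n \to 0^+$ small enough that $t_n^N < Q^2(\mathbf{x})$ when $Q(\mathbf{x}) \neq 0$; when $Q(\mathbf{x}) = 0$ as well, one needs $0 \leq -t^{N+1}$, which fails for $t > 0$, so a fixed sequence does not work and one must instead perturb $\mathbf{x}_n \to \mathbf{x}$ along points where $Q$ does not vanish — this is possible precisely because $Q$ is not identically zero (it cannot be, since $P = FQ$ and if $Q \equiv 0$ then $P \equiv 0$ and the family is degenerate; more carefully, $\Zer(F) \cap \overline{B}$ is contained in the closure of $\{F = 0, Q \neq 0\} \cap \overline{B}$ away from a lower-dimensional set, using that $\Zer(F)$ has no isolated points contained in $\Zer(Q)$ unless $Q$ is a nonzero constant times a power of $F$). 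The choice $N = 2\deg(Q)+1$ is calibrated so that $t^N = o(Q^2(\mathbf{x}))$ wherever $Q(\mathbf{x})$ is bounded below; I expect the exact exponent to be what makes the clean sandwiching in the converse direction work.

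**The inclusion $\TT_{\limit} \subseteq \Zer(F,\mathbb{R}^k) \cap \overline{B_k(0,R)}$.** Suppose $(\mathbf{x}_n, t_n) \in \TT$ with $t_n \to 0^+$, $\mathbf{x}_n \to \mathbf{x}$. Then $|\mathbf{x}|^2 \leq R^2$ by continuity. From $P^2(\mathbf{x}_n) \leq t_n(Q^2(\mathbf{x}_n) - t_n^N) \leq t_n Q^2(\mathbf{x}_n)$ and the boundedness of $Q^2$ on $\overline{B_k(0,R)}$, letting $n \to \infty$ gives $P^2(\mathbf{x}) \leq 0$, hence $P(\mathbf{x}) = 0$. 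It remains to deduce $F(\mathbf{x}) = 0$, i.e.\ to rule out $\mathbf{x} \in \Zer(P) \setminus \Zer(F)$, which forces $Q(\mathbf{x}) = 0$. Here the constraint $P^2(\mathbf{x}_n) \leq t_n(Q^2(\mathbf{x}_n) - t_n^N)$ requires $Q^2(\mathbf{x}_n) \geq t_n^N$, i.e.\ $|Q(\mathbf{x}_n)| \geq t_n^{N/2}$. Combined with $|P(\mathbf{x}_n)| \leq t_n^{1/2}|Q(\mathbf{x}_n)|$, and using that near such a point $\mathbf{x}$ with $P(\mathbf{x}) = Q(\mathbf{x}) = 0$ one has a Łojasiewicz-type inequality $|Q(\mathbf{y})| \leq C\,\mathrm{dist}(\mathbf{y}, \Zer(Q))^{1/m}$ is the wrong direction; rather I would use that $|Q(\mathbf{x}_n)| \to 0$ forces, via $|Q(\mathbf{x}_n)| \geq t_n^{N/2}$ and $|P(\mathbf{x}_n)| \leq t_n^{1/2}|Q(\mathbf{x}_n)|$, an estimate $|P(\mathbf{x}_n)|/|Q(\mathbf{x}_n)|^{1 + 1/N} \leq |Q(\mathbf{x}_n)|^{-1/N} t_n^{1/2} \leq 1$, so $|P(\mathbf{x}_n)| \leq |Q(\mathbf{x}_n)|^{1+1/N}$. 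Since $F = P/Q$ is a polynomial, $|F(\mathbf{x}_n)| = |P(\mathbf{x}_n)|/|Q(\mathbf{x}_n)| \leq |Q(\mathbf{x}_n)|^{1/N} \to 0$, whence $F(\mathbf{x}) = 0$ by continuity.

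**Main obstacle.** The delicate point is the forward direction $\Zer(F) \cap \overline{B} \subseteq \TT_{\limit}$ at points $\mathbf{x}$ where $Q$ also vanishes: one must produce an approximating sequence $\mathbf{x}_n \to \mathbf{x}$ with $F(\mathbf{x}_n)$ small relative to $|Q(\mathbf{x}_n)|$ and $|Q(\mathbf{x}_n)|$ not too small relative to $t_n$. I would handle this by a curve selection / Łojasiewicz argument: since $\mathbf{x} \in \Zer(F)$, choose $\mathbf{x}_n \to \mathbf{x}$ along $\Zer(F)$ with $Q(\mathbf{x}_n) \neq 0$ (possible unless a whole neighborhood of $\mathbf{x}$ in $\Zer(F)$ lies in $\Zer(Q)$, a case one disposes of directly), then pick $t_n \to 0^+$ with $t_n^N < Q^2(\mathbf{x}_n)$, so that on $\Zer(F)$ we have $P = 0$ and the inequality $0 \leq t_n(Q^2(\mathbf{x}_n) - t_n^N)$ holds. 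The verification that $N = 2\deg(Q) + 1$ suffices throughout, and that the various limits are uniform on $\overline{B_k(0,R)}$, is the routine-but-essential bookkeeping I would carry out in detail.
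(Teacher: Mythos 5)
Your inclusion $\TT_{\limit}\subseteq \Zer(F,\mathbb{R}^k)\cap\overline{B_k(0,R)}$ is fine and is essentially the paper's argument: the defining inequality with $t_n>0$ forces $Q^2(\mathbf{x}_n)\ge t_n^N>0$, so one may divide and conclude $F^2(\mathbf{x}_n)\le t_n\to 0$ (your manipulation with the exponent $1+1/N$ amounts to the same thing). The genuine gap is in the inclusion $\Zer(F,\mathbb{R}^k)\cap\overline{B_k(0,R)}\subseteq\TT_{\limit}$ at a point $\mathbf{x}$ with $Q(\mathbf{x})=0$. Your plan is to approximate $\mathbf{x}$ by points of $\Zer(F,\mathbb{R}^k)$ at which $Q\neq 0$, and you set aside the case where no such points exist near $\mathbf{x}$ as one that can be ``disposed of directly.'' But that case is not exceptional and you give no argument for it: already for $F=X_1$, $Q=X_1$, $P=X_1^2$ one has $\Zer(F,\mathbb{R}^k)\subseteq\Zer(Q,\mathbb{R}^k)$ globally, and pairs $(P,Q)$ with $\Zer(F,\mathbb{R}^k)$ meeting (or contained in) $\Zer(Q,\mathbb{R}^k)$ are exactly what the proposition must handle in its intended application. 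In such a situation every admissible approximating point must leave $\Zer(F,\mathbb{R}^k)$, since on $\Zer(F,\mathbb{R}^k)\cap\Zer(Q,\mathbb{R}^k)$ the constraint reads $0\le -t^{N+1}$; yet the statement is still true (in the example, the points with first coordinate equal to $t$ lie in $\TT_t$), so your construction simply cannot reach these limit points. A secondary defect of restricting to $\Zer(F,\mathbb{R}^k)$: when $\mathbf{x}$ lies on the sphere $|\mathbf{x}|=R$ and $\Zer(F,\mathbb{R}^k)$ meets $\overline{B_k(0,R)}$ only tangentially at $\mathbf{x}$, the nearby points of $\Zer(F,\mathbb{R}^k)\setminus\Zer(Q,\mathbb{R}^k)$ may all violate $|\mathbf{y}|\le R$. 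Also, the muddled remark about ``isolated points unless $Q$ is a power of $F$'' is not a correct dichotomy and does not repair this.

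The missing idea, which is the core of the paper's proof, is to move off $\Zer(F,\mathbb{R}^k)$ along a generic straight line rather than inside it. Set $\widehat{P}(U)=P(\mathbf{x}+U\mathbf{v})$, $\widehat{Q}(U)=Q(\mathbf{x}+U\mathbf{v})$, $\widehat{F}(U)=F(\mathbf{x}+U\mathbf{v})$ for generic $\mathbf{v}$, replacing $\mathbf{v}$ by $-\mathbf{v}$ if necessary so that $\mathbf{x}+t\mathbf{v}\in B_k(0,R)$ for small $t>0$ (easy for a non-tangent direction even when $\mathbf{x}$ is on the sphere). Since $\widehat{P}=\widehat{F}\widehat{Q}$ and $\widehat{F}(0)=0$, the multiplicities satisfy $\nu:=\mathrm{mult}_0\widehat{P}>\mu:=\mathrm{mult}_0\widehat{Q}$, and genericity of $\mathbf{v}$ guarantees $\widehat{P}\not\equiv 0$ when $F\not\equiv 0$. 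Then the lowest-order term of $U\bigl(\widehat{Q}^2(U)-U^N\bigr)-\widehat{P}^2(U)$ is $d_\mu^2U^{2\mu+1}$ with $d_\mu\neq 0$, because $2\mu+1<2\nu$ and $2\mu+1\le 2\deg(Q)+1=N<N+1$; this is precisely where the choice $N=2\deg(Q)+1$ enters. Hence $\mathbf{x}+t\mathbf{v}\in\TT_t$ for all sufficiently small $t>0$, which gives $\mathbf{x}\in\TT_{\limit}$. Without this multiplicity comparison (or some substitute such as a curve-selection/\L ojasiewicz argument carried out off $\Zer(F,\mathbb{R}^k)$), the case $\Zer(F,\mathbb{R}^k)\subseteq\Zer(Q,\mathbb{R}^k)$ — and more generally any $\mathbf{x}$ whose local branch of $\Zer(F,\mathbb{R}^k)$ lies in $\Zer(Q,\mathbb{R}^k)$ — remains unproved, so the proposal as written has a real gap in the forward inclusion.
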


Before proving Proposition \ref{prop:sectionfivemain}
we first discuss an illustrative example.

\begin{example}
\label{eg:main}
Let
\[
F_1=X(X^2+Y^2-1),
\]
\[F_2=X^2+Y^2-1.
\]
Also, let
\[
P_1=X^2(X^2+Y^2-1),
\]
\[
P_2= X(X^2+Y^2-1),
\]
and
\[
Q_1=Q_2=X.
\]

For $i=1,2$, and $R >0$, let
\[
\TT^i =\{(\mathcal{\textbf{x}},t)\in
\mathbb{R}^k\times \mathbb{R}_+
| \; 
P_i^2(\mathcal{\textbf{x}}) \leq t(Q_i^2(\mathcal{\textbf{x}})-t^N)
\wedge 
|\mathcal{\textbf{x}}|^2
\leq R^2
\}
\]
as in Proposition \ref{prop:sectionfivemain}.

In Figure \ref{fig:example}, we display
from left to right,
$\Zer(F_1,\mathbb{R}^2)$, $\TT^1_\ep$, $\Zer(F_2,\mathbb{R}^2)$ and
and $\TT^2_\ep$, respectively (where $\ep=.005$ and $N=3$).
Notice that, for $i=1,2$ and any fixed $R>0$, the semi-algebraic
set $T^i_\ep$ approaches (in the
sense of Hausdorff
distance)
the set $\ZZ(F_i,\mathbb{R}^2) \cap \overline{B_2(0,R)}$
as $\ep \rightarrow 0$.

\begin{figure}[h!]
\vspace{-150 pt}
\centering
  \subfloat[]{\label{fig:a} \includegraphics[bb=50 0 500 700,
      clip,width=.18\textwidth]{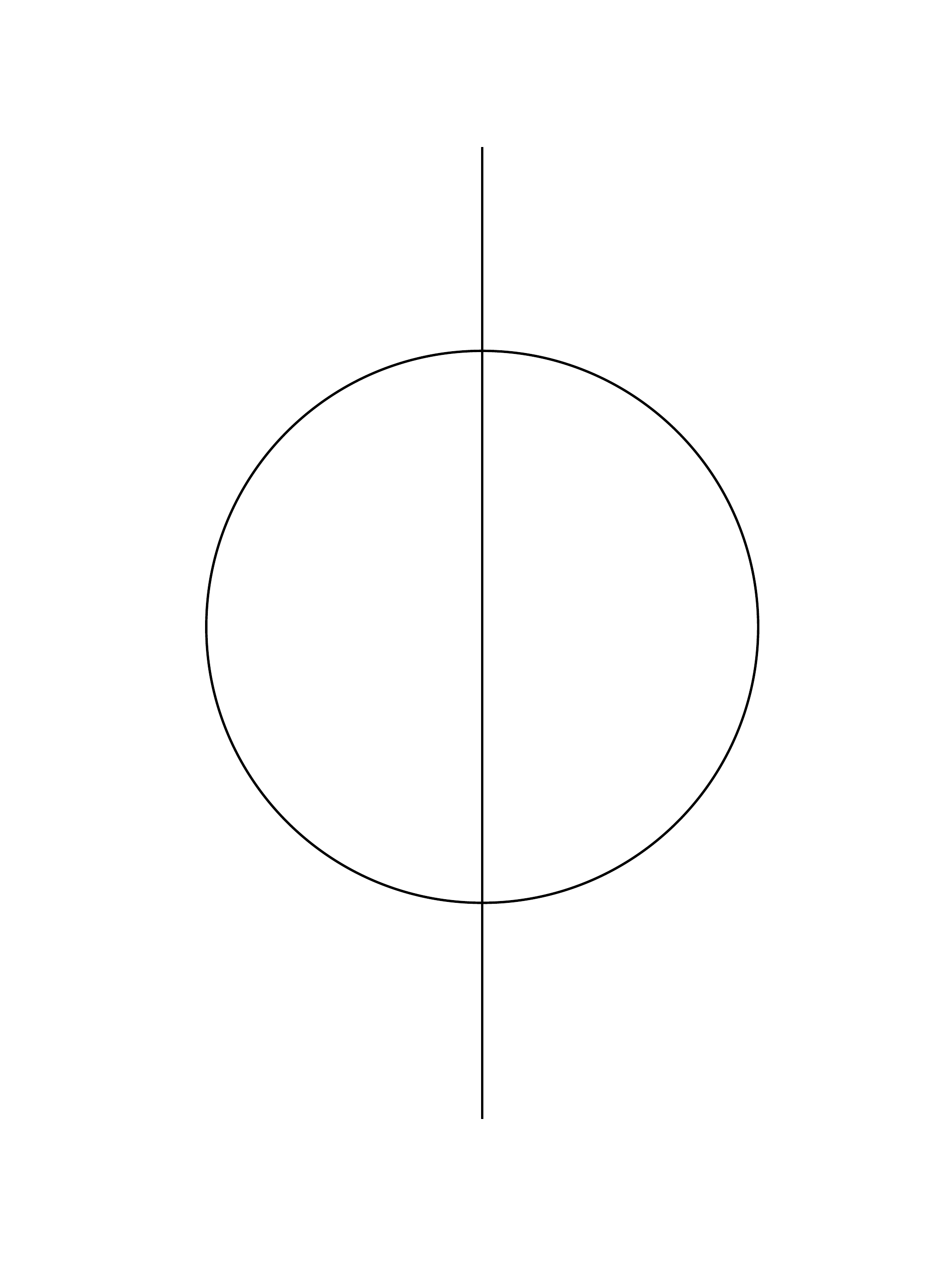}}
  \subfloat[]{\label{fig:b} \includegraphics[bb=20 0 230 500,
      clip,width=.27\textwidth]{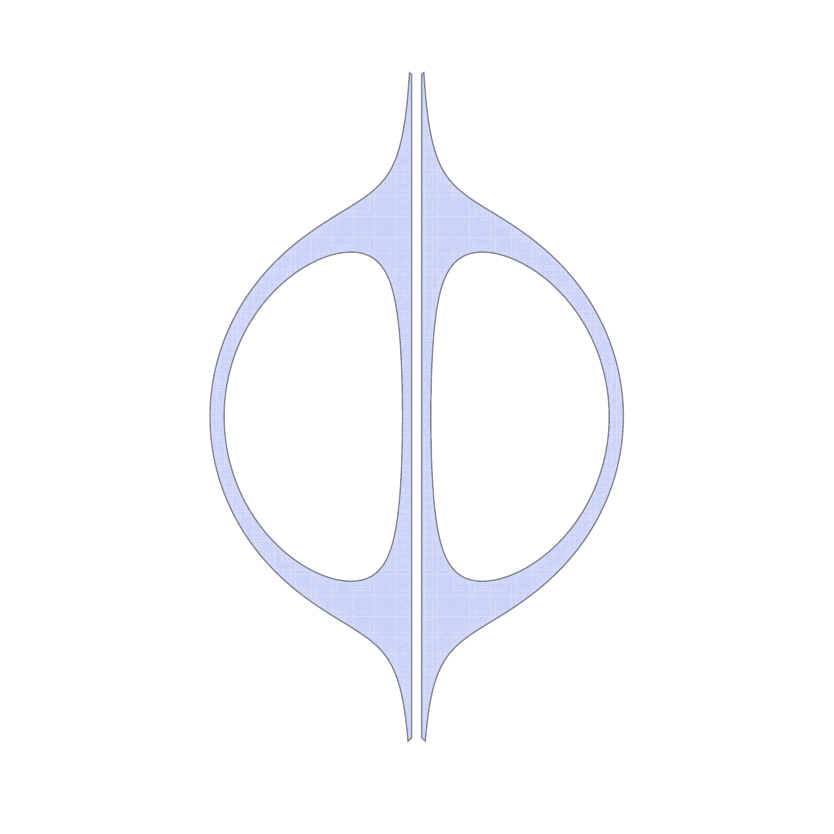}}
  \subfloat[]{\label{fig:c} \includegraphics[bb=80 40 500 700,clip,
      width=.18\textwidth]{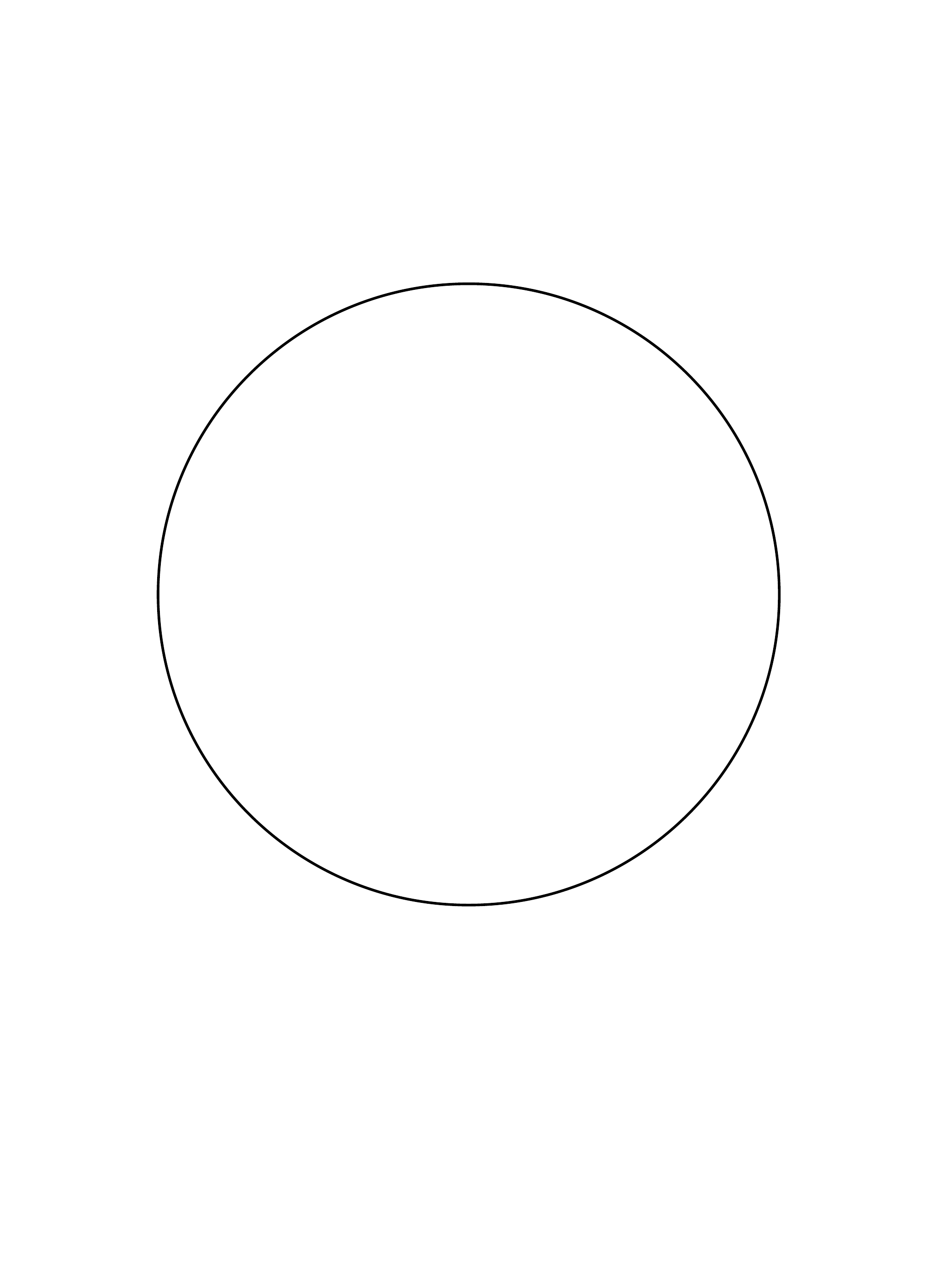}}
  \subfloat[]{\label{fig:d} \includegraphics[bb=40 0 200 500 ,clip,
      width=.22\textwidth]{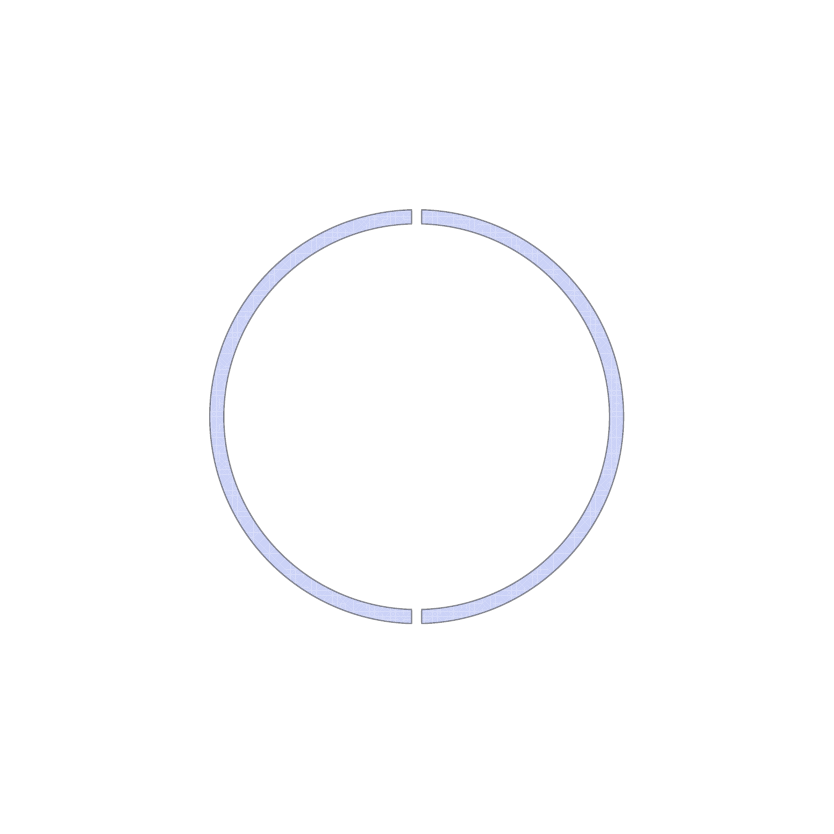}}
  \caption{Two examples.}
  \label{fig:example}
\end{figure}
\end{example}

We now prove Proposition \ref{prop:sectionfivemain}.

\proof[Proof of Proposition \ref{prop:sectionfivemain}]
We show both inclusions.  First let $\mathcal{\textbf{x}}\in \TT_{\limit}$, and we
show that $F(\mathcal{\textbf{x}})=0$.
In particular, we prove that $0\leq F^2(\mathcal{\textbf{x}})<\ep$ for every $\ep>0$.

Let $\ep>0$.
Since $F^2$ is
continuous,
there exists $\delta>0$ such that
\begin{equation}{\label{eqn:sectfiveCONTF}}|\mathcal{\textbf{x}}-\textbf{y}|^2<\delta \quad
\implies \quad |F^2(\mathcal{\textbf{x}})-F^2(\textbf{y})|<\fraction{\ep}{2}.\end{equation} After
possibly making $\delta$ smaller we can suppose that
$\delta<\fraction{\ep^2}{4}$.

From the definition of $\TT_{\limit}$
(cf. Notation \ref{not:limit}),
we have that
\begin{equation}{\label{eqn:Tzero}} \TT_{\limit} = \{ \mathcal{\textbf{x}} \mid \quad (\forall
\delta)(\delta>0\implies (\exists t)(\exists \textbf{y})(\textbf{y}\in \TT_t \wedge
|\mathcal{\textbf{x}}-\textbf{y}|^2+t^2<\delta))\}.\end{equation}

Since
$\mathcal{\textbf{x}}\in \TT_{\limit}$,
there exists
$t\in \mathbb{R}_+$
and $\textbf{y}\in \TT_t$ such that $|\mathcal{\textbf{x}}-\textbf{y}|^2+t^2<\delta$, and in
particular both $|\mathcal{\textbf{x}}-\textbf{y}|^2<\delta$ and
$t^2<\delta<\fraction{\ep^2}{4}$.  The former inequality implies that
$|F^2(\mathcal{\textbf{x}})-F^2(\textbf{y})|<\fraction{\ep}{2}$.  The latter inequality implies
$t<\fraction{\ep}{2}$, and this together with $\textbf{y}\in \TT_t$
implies
$$\begin{aligned} &P^2(\textbf{y})\leq t (Q^2(\textbf{y})-t^N)\\ \implies &
F^2(\textbf{y})Q^2(\textbf{y})\leq t (Q^2(\textbf{y})-t^N) \\ \implies & 0\leq
F^2(\textbf{y})\leq t-\frac{t^{N+1}}{Q^2(\textbf{y})}<t \\ \implies & 0\leq
F^2(\textbf{y})<\fraction{\ep}{2}.
\end{aligned}$$
So, $F^2(\textbf{y})<\fraction{\ep}{2}$.  Finally, note that $|F^2(\mathcal{\textbf{x}})|\leq
|F^2(\mathcal{\textbf{x}}) - F^2(\textbf{y})| + |F^2(\textbf{y})| < \fraction{\ep}{2} +
\fraction{\ep}{2} = \ep$.

We next prove the other inclusion, namely we show
$\Zer(F,\mathbb{R}^k)\cap \overline{B_k(0,R)}\subseteq \TT_{\limit}$.
Let $\mathcal{\textbf{x}}\in \Zer(F,\mathbb{R}^k)\cap \overline{B_k(0,R)}$.
We fix $\delta>0$ and  show that there exists
$t\in \mathbb{R}_+$
and $\textbf{y}\in \TT_t$
such that $|\mathcal{\textbf{x}}-\textbf{y}|^2+t^2<\delta$ (cf. Equation \ref{eqn:Tzero}).

There are two cases to consider.
\begin{itemize}
\item[$Q(\mathcal{\textbf{x}})\neq 0$:]
Since $Q(\mathcal{\textbf{x}})\neq 0$,
there exists $t>0$ such that $Q^2(\mathcal{\textbf{x}})\geq t^N$ and $t^2<\delta$.
Now, $\mathcal{\textbf{x}}\in \TT_t$ and $$|\mathcal{\textbf{x}}-\mathcal{\textbf{x}}|^2+t^2 \ = \ t^2 \ <\delta,$$ so
setting $\textbf{y}=\mathcal{\textbf{x}}$ we see that $\textbf{y}\in \TT_t$ and $|\mathcal{\textbf{x}}-\textbf{y}|+t^2<\delta$.
Thus, $\mathcal{\textbf{x}}\in \TT_{\limit}$ as desired.

\item[$Q(\mathcal{\textbf{x}})=0$:]

Let $\textbf{v}\in \mathbb{R}^k$ be generic, and
denote
$\widehat{P}(U)=P(\mathcal{\textbf{x}}+U\textbf{v})$,
$\widehat{Q}(U)=Q(\mathcal{\textbf{x}}+U\textbf{v})$, and
$\widehat{F}(U)=F(\mathcal{\textbf{x}}+U\textbf{v})$.
Note that
\begin{eqnarray}
\label{eqn:multiplicity}
\widehat{P} = \widehat{F}\widehat{Q}, \nonumber \\
\widehat{P}(0) = \widehat{Q}(0) = \widehat{F}(0) = 0.
\end{eqnarray}

If $F$ is not the zero polynomial, then neither is $\widehat{P}$, since $\textbf{v}$ is generic.
Indeed,
assume $F$ is not identically zero, and hence $P$
is not identically zero.
In order to prove that $\widehat{P}$ is not
identically zero for a generic choice of
$\textbf{v}$,
write $P = \sum_{0 \leq i \leq d} P_i$ where $P_i$ is the homogeneous
part of $P$ of degree $i$, and $P_d$ not identically zero. Then, it is
easy to see that $\widehat{P}(U) = P_d(\textbf{v}) U^d + \mathrm{\mbox{lower
    degree terms}}$.  Since $\mathbb{R}$ is an infinite field, a generic
choice of $\textbf{v}$ will avoid the set of zeros of $P_d$,
and thus,
$\widehat{P}$ is not identically zero.

We further require
that $\mathcal{\textbf{x}}+t\textbf{v}\in B_k(0,R)$ for $t>0$ sufficiently small.  For
generic $\textbf{v}$, this is true for either $\textbf{v}$ or $-\textbf{v}$, and so
after possibly replacing $\textbf{v}$ by $-\textbf{v}$
(and noticing that since $P_d$ is homogeneous we have $P_d(\textbf{v}) =
(-1)^d P_d(-\textbf{v})$) we may assume $\mathcal{\textbf{x}}+t\textbf{v}\in B_k(0,R)$ for $t>0$
sufficiently small.
Let $t_0>0$ be such that $\mathcal{\textbf{x}}+t\textbf{v}\in B_k(0,R)$ for $0<t< t_0$.

Denoting by $\nu = \mathrm{mult}_0(\hatP)$ and
$\mu = \mathrm{mult}_0(\hatQ)$, we have from (\ref{eqn:multiplicity})
that $\nu > \mu$.
Let
$$\begin{aligned} \hatP(U)&=\sum_{i=\nu}^{\deg_U \hatP} c_iU^i = U^\nu
  \cdot \sum_{i=0}^{\deg_U \hatP - \nu} c_{\nu+i}U^i = c_\nu U^\nu +
  \text{ (higher order terms)}, \\ \hatQ(U)&=\sum_{i=\mu}^{\deg_U \hatQ} d_iU^i = U^\mu \cdot
  \sum_{i=0}^{\deg_U \hatQ - \mu} d_{\mu+i}U^i = d_\mu U^\mu +
  \text{ (higher order terms)} \end{aligned}$$
where $c_\nu,d_\mu \neq 0$.

Then we have $$\begin{aligned} \hatP^2(U)&=c_\nu^2 U^{2\nu} +
  \text{ (higher order terms)}, \\
  \hatQ^2(U)&=d_\mu^2 U^{2\mu} +
  \text{ (higher order terms)}, \\
  D(U):=U(\hatQ^2(U)-U^N)&= U(d_\mu^2 U^{2\mu} +
  \text{ (higher order terms)}-U^N), \\
  D(U)-\hatP^2(U)&= d_\mu^2 U^{2\mu+1} + \text{ (higher order terms)} -
      U^{N+1}.
\end{aligned}$$

Since $\mu \leq \deg(Q)$ and $N = 2 \deg(Q) + 1$,
we have that $2\mu+1 < N+1$. Hence,
there exists $t_1 \in \mathbb{R}_+$ such that for
each $t$, $0 < t < t_1$,
we have $D(t)-\hatP^2(t)\geq 0$.
Thus,
$\mathcal{\textbf{x}}+t\textbf{v}\in \TT_t$ for each $t$, $0<t<\min\{t_0,t_1\}$.
Let $t_2 = (\frac{\delta}{|\textbf{v}|^2+1})^{1/2}$ and note that
for all $t$, $0<t< t_2$, we have $(|\textbf{v}|^2+1)t^2<\delta$.
Finally, if $t$ satisfies
$0<t< \min\{t_0,t_1,t_2\}$ then
$\mathcal{\textbf{x}}+t\textbf{v}\in \TT_t$,
and
$$|\mathcal{\textbf{x}}-(\mathcal{\textbf{x}}+t\textbf{v})|^2+t^2\ = \ (|\textbf{v}|^2+1)t^2 <
\delta.$$
Hence, setting $\mathcal{\textbf{y}}=\mathcal{\textbf{x}}+t\textbf{v}$ (cf. Equation \ref{eqn:Tzero}) we have shown that $\mathcal{\textbf{x}}\in \TT_\limit$ as desired.

The case where $F$ is the zero polynomial is straightforward.

\end{itemize}
\zz

\proof[Proof of Theorem \ref{thm:algebraic}]
For each $F\in \mathbb{R}[X_1,\ldots,X_k]$, by the conical structure at infinity of semi-algebraic sets
(see for instance \cite[page 188]{BPRbook2}), we have
that
there exists $R_F\in \mathbb{R}_+$ such that, for every $R>R_F$, the
semi-algebraic sets $\Zer(F,\mathbb{R}^k)\cap \overline{B_k(0,R)}$ and $\Zer(F,\mathbb{R}^k)$
are semi-algebraically
homeomorphic.

Let $\ell\in \mathbb{N}$, $F_1,\ldots,F_\ell\in \mathbb{R}[X_1,\ldots,X_k]$ such that each $F_i$ has additive complexity at most $a$ and, for every $F$ having additive complexity at most $a$, the algebraic sets $\Zer(F,\mathbb{R}^k),\Zer(F_i,\mathbb{R}^k)$ are semi-algebraically homeomorphic for some $i$, $1\leq i \leq \ell$
(see, for example \cite[Theorem 3.5]{Dries}).  Let $R=\max_{1\leq i \leq \ell} \{R_{F_i}\}$.

Let $F\in \{F_i\}_{1\leq i \leq \ell}$.
By Lemma \ref{lem:equivalence} there exists polynomials $P,Q \in \mathbb{R}[X_1,\ldots,X_k]$ such that
$FQ=P$, and such that $P,Q$
satisfies $P^2-T(Q^2-T^N)\in \mathbb{R}[X_1,\ldots,X_k,T]$ has division-free additive complexity bounded by $a+2$.
Let
$$
\TT=\{(\mathcal{\textbf{x}},t)\in
\mathbb{R}^k \times \mathbb{R}_+| \;
P^2(\mathcal{\textbf{x}}) \leq t(Q^2(\mathcal{\textbf{x}})-t^N) \wedge 
|\mathcal{\textbf{x}}|^2
\leq R^2
\}.
$$

By Proposition \ref{prop:sectionfivemain}
 we have that
$
\TT_{\limit} =
\Zer(F,\mathbb{R}^k)
\cap \overline{B_k(0,R)}.
$
Note the one-parameter semi-algebraic
family $\TT$ (where the last co-ordinate is the parameter) is
described by a formula having division-free
additive format
$(a+k+2,k+1)$.

By Theorem \ref{thm:main_weak}
we obtain a collection of
semi-algebraic sets $\mathcal{S}_{k,a+k+2}$
such that $\TT_{\limit}$, and hence $\Zer(F,\mathbb{R}^k)$, is homotopy equivalent
to some $S\in \mathcal{S}_{k,a+k+2}$
and $\#\mathcal{S}_{k,a+k+2} = 2^{O(k(k^2+a))^8}$,
which proves the theorem.

\zz
\end{subsection}

\begin{subsection}{The semi-algebraic case}
\label{subsec:semi-algebraic}
We first prove a generalization of Proposition \ref{prop:sectionfivemain}.

\begin{notation} Let $\X=(X_1,\dots,X_k)$ be a block of variables
and $\textbf{k}=(k_1,\dots,k_n)\in \mathbb{N}^n$ with
$\sum_{i=1}^n k_i=k$.  Let $\textbf{r}=(r_1,\dots,r_n)\in \mathbb{R}^n$ with $r_i>0$,
$i=1,\dots,n$.  Let $B_{\textbf{k}}(0,\textbf{r})$ denote the product
$$B_{\textbf{k}}(0,\textbf{r}):= B_{k_1}(0,r_1)\times \dots \times B_{k_n}(0,r_n).$$
\end{notation}

\begin{proposition}
\label{prop:level1}
Let $F_1,\dots,F_s,P_1,\dots,P_s,Q_1,\dots,Q_s\in
\mathbb{R}[\X^1,\dots,\X^n]$, $\mathcal{P}=\{F_1,\dots,F_s\}$
such that $F_iQ_i=P_i$, for all $i=1,\ldots,s$.  Suppose
$\X^i=(X^i_1,\dots,X^i_{k_i})$ and let \emph{$\textbf{k}=(k_1,\dots,k_n)$}.
Suppose $\phi$ is a $\mathcal{P}$-formula containing no negations and no
inequalities.  Let $$\begin{aligned} \bar{P_i}:=&P_i\prod_{j\neq
    i}Q_j, \\ \bar{Q}:=&\prod_j Q_j, \end{aligned}$$ and let
$\bar{\phi}$ denote the formula
obtained
from $\phi$
by replacing each $F_i=0$ with
$$\bar{P_i}^2 - U(\bar{Q}^2-U^N) \leq 0,$$
where $U$ is the last variable of $\bar{\phi}$,
$N=2\deg (\bar{Q})+1$.  Then,
for every $\emph{\textbf{r}}=(r_1,\dots,r_n)\in \mathbb{R}_+^n$,
we have (cf. Notation \ref{not:reali} and Notation \ref{not:limit})

\begin{equation}
\label{eqn:level1}
\Reali\left(\bigwedge_{i=1}^n (|\X^i|^2\leq r_i^2) \wedge \bar{\phi}
\wedge U>0
\right)_{\limit}=\Reali(\phi) \cap \overline{B_{\emph{\textbf{k}}}(0,\emph{\textbf{r}})}.
\end{equation}
\end{proposition}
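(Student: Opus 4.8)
The plan is to reduce the general statement to repeated applications of Proposition \ref{prop:sectionfivemain}, exploiting the fact that $\bar\phi$ is built from the atoms $F_i = 0$ using only conjunctions and disjunctions (no negations, no inequalities). The first step is to observe the key algebraic identity underlying the substitution: since $F_i Q_i = P_i$, we have
\[
\bar P_i = P_i \prod_{j \ne i} Q_j = F_i Q_i \prod_{j\ne i} Q_j = F_i \bar Q,
\]
so $\bar P_i = F_i \bar Q$ for each $i$. Thus the family $(\bar P_i, \bar Q)$ plays exactly the role of $(P,Q)$ in Proposition \ref{prop:sectionfivemain}, but now with a \emph{common} denominator $\bar Q$ for all $i$ and a common auxiliary variable $U$. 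This common denominator is the crucial point: it means the single parameter $U$ simultaneously controls the limiting behavior of every atom, which is what lets the limit commute with the Boolean operations.

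Next I would establish the "atomic" statement: for a fixed closed ball $\overline{B_{\textbf{k}}(0,\textbf{r})}$, and for each $i$,
\[
\Reali\!\left(\bigwedge_{j=1}^n (|\X^j|^2\le r_j^2) \wedge \bar P_i^2 - U(\bar Q^2 - U^N) \le 0 \wedge U > 0\right)_{\limit} = \Zer(F_i,\mathbb{R}^k)\cap \overline{B_{\textbf{k}}(0,\textbf{r})},
\]
which is essentially Proposition \ref{prop:sectionfivemain} applied with $P = \bar P_i$, $Q = \bar Q$ (note $N = 2\deg(\bar Q)+1$ is exactly the exponent required there, and $2\deg(\bar Q)+1 \ge 2\,\mathrm{mult}_0(\hat{\bar Q}) + 1$ holds for the same reason). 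The only adjustment is that the ball $\overline{B_k(0,R)}$ is replaced by the product ball $\overline{B_{\textbf{k}}(0,\textbf{r})}$; the proof of Proposition \ref{prop:sectionfivemain} goes through verbatim with this change, since it only used that the ball is a fixed closed bounded semi-algebraic neighborhood and that the generic line $\textbf{x} + t\textbf{v}$ stays inside it for small $t$ when $\textbf{x}$ is interior, and can be made to stay inside or we perturb — exactly as in the original argument. (When $\textbf{x}$ lies on the boundary sphere of some factor, one chooses $\textbf{v}$ pointing inward, again as in the original.)

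The final step is to propagate the limit through the Boolean structure of $\bar\phi$. Here I would use the elementary fact that for families $\TT^1, \TT^2 \subset \mathbb{R}^k \times \mathbb{R}_+$ that are monotone in the parameter (which these are, as one checks from Lemma \ref{lem:prop7b}-style reasoning, or directly: increasing $U$ enlarges each set $\{\bar P_i^2 \le U(\bar Q^2 - U^N)\}$ for $U$ small), the limit operation satisfies $(\TT^1 \cap \TT^2)_{\limit} = \TT^1_{\limit} \cap \TT^2_{\limit}$ and $(\TT^1 \cup \TT^2)_{\limit} = \TT^1_{\limit} \cup \TT^2_{\limit}$; the first because, by Lemma \ref{lem:prop7a}, the limit equals $\bigcap_{t>0}$ of the slices and intersections commute, and the second likewise since finite unions commute with the directed intersection when the slices are increasing. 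Applying this inductively over the formation tree of $\bar\phi$ (whose atoms are the sets from the atomic step, all sharing the conjunct $\bigwedge_j(|\X^j|^2\le r_j^2)$ which survives the limit as $\overline{B_{\textbf{k}}(0,\textbf{r})}$) yields exactly $\Reali(\phi) \cap \overline{B_{\textbf{k}}(0,\textbf{r})}$ on the right-hand side.

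The main obstacle I anticipate is the commutation of $(-)_{\limit}$ with union: unlike intersection, $\overline{A \cup B} = \overline A \cup \overline B$ for closures but one must be careful that the \emph{limit} (slice at $0$ of the closure, then project) genuinely distributes — this requires the monotonicity of the slices in the parameter $t$ (equivalently $U$), which must be verified from the specific form $\bar P_i^2 \le t(\bar Q^2 - t^N)$ and is only true for $t$ in a small enough interval $(0, t_0)$; establishing this monotonicity (for $t$ small) and reducing the limit computation to such an interval via Lemma \ref{lem:prop7a} and Hardt triviality is the technical heart of the argument. Everything else is either a direct citation of Proposition \ref{prop:sectionfivemain} or routine set-theoretic bookkeeping over the Boolean formula.
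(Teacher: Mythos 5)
There is a genuine gap at the step you yourself flag as the technical heart: the claimed distributivity of $(-)_{\limit}$ over conjunctions via monotonicity of the slices in the parameter. The slices $\TT_t=\{\x : \bar P_i^2(\x)\le t(\bar Q^2(\x)-t^N)\}$ are \emph{not} nested for $t$ in any uniform interval $(0,t_0)$: for a fixed $\x$ the map $t\mapsto t(\bar Q^2(\x)-t^N)$ is increasing only while $t^N<\bar Q^2(\x)/(N+1)$, a threshold that degenerates as $\x$ approaches $\Zer(\bar Q)$, and membership of a fixed $\x$ typically holds only for $t$ in a middle range (e.g.\ with $\bar P=X^2$, $\bar Q=X$ one needs roughly $t\gtrsim x^2$). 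Worse, any point with $\bar Q(\x)=0$ and $F_i(\x)=0$ lies in \emph{no} slice $\TT_t$, $t>0$, since the atomic inequality becomes $0\le -t^{N+1}$; yet such points are exactly the ones that must be shown to lie in the limit. Consequently the identification $\TT_{\limit}=\bigcap_{t>0}\TT_t$ fails here, Lemma \ref{lem:prop7a} is not applicable (its nestedness hypothesis is violated), and the asserted identity $(\TT^1\cap\TT^2)_{\limit}=\TT^1_{\limit}\cap\TT^2_{\limit}$ cannot be obtained by this route. (For unions there is no problem: closure, slicing at $0$, and projection all commute with finite unions unconditionally, so no monotonicity is needed there either.)

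The paper's proof shows what is actually required in place of that formal distributivity. One first notes, as you do, that only the inclusion $\Reali(\phi)\cap\overline{B_{\textbf{k}}(0,\textbf{r})}\subseteq(\cdot)_{\limit}$ at points with $\bar Q(\x)=0$ is nontrivial; one then writes $\phi$ in disjunctive normal form (unions commute with the limit), and for each conjunct $\phi_\alpha$ one repeats the curve construction of Proposition \ref{prop:sectionfivemain} \emph{simultaneously} for all atoms occurring in $\phi_\alpha$: a single generic direction $\textbf{v}$ gives, for each $i$, $\widehat{P_i}=\widehat{F_i}\widehat{Q}$ with $\nu_i=\mathrm{mult}_0(\widehat{P_i})>\mu=\mathrm{mult}_0(\widehat{Q})$ (here the common denominator $\bar Q$ and the single parameter are used), hence thresholds $t_{1,i}$ with $\x+t\textbf{v}\in$ the $i$-th atomic set for $0<t<t_{1,i}$; taking $t<\min\{t_0,t_{1,1},\dots,t_{1,s},t_2\}$ produces one point $(\x+t\textbf{v},t)$ witnessing all atoms of $\phi_\alpha$ at once. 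Your proposal contains the right intuition (the common $\bar Q$ and common $U$) and the correct atomic step, but the reduction of the conjunctive case to a monotonicity lemma is not repairable as stated; you must instead carry out this simultaneous approximation argument (or an equivalent one) for each conjunct.
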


\proof We follow the proof of Proposition \ref{prop:sectionfivemain}.
The only case which is not immediate is the case $\mathcal{\textbf{x}}\in
\Reali(\phi)\cap \overline{B_{\textbf{k}}(0,\textbf{r})}$ and $\bar{Q}(\mathcal{\textbf{x}})=0$.

\newcommand{\barphi}{\bar{\phi}}
\newcommand{\barphialpha}{{\bar{\phi}_{\alpha}}}

Suppose $\mathcal{\textbf{x}}\in \Reali(\phi)\cap \overline{B_{\textbf{k}}(0,\textbf{r})}$ and
that $\bar{Q}(\mathcal{\textbf{x}})=0$.  Since $\phi$ is a formula containing no
negations and no inequalities, it consists of conjunctions and
disjunctions of equalities.  Without loss of generality we can assume
that $\phi$ is written as a disjunction of conjunctions, and still
without negations.  Let
$$\phi = \bigvee_{\alpha} \phi_\alpha$$ where $\phi_\alpha$ is a
conjunction of equations.  As above let $\barphialpha$ be the formula
obtained from $\phi_\alpha$ after replacing each $F_i=0$ in
$\phi_\alpha$ with
$$\bar{P_i}^2\leq U(\bar{Q}^2-U^N),$$ $N=2\deg (\bar{Q})+1$.

We have
$$\begin{aligned} \Reali\left(\bigwedge_{i=1}^p (|\X^i|^2\leq r_i^2
  )\wedge \barphi
\wedge U>0
 \right)_{\limit} & =\Reali\left( \bigwedge_{i=1}^p
  (|\X^i|^2\leq r_i^2 )\wedge \left(\bigvee_\alpha \barphialpha \right)
\wedge U>0
  \right)_{\limit} \\ & =\Reali\left( \bigvee_{\alpha} \bigwedge_{i=1}^p
  (|\X^i|^2\leq r_i^2 )\wedge \barphialpha
\wedge U>0
\right)_{\limit} \\ & =
  \bigcup_\alpha \Reali\left(\bigwedge_{i=1}^p (|\X^i|^2\leq r_i^2 )\wedge
  \barphialpha
\wedge U>0
\right)_{\limit}.
\end{aligned}$$

In order to show that $\mathcal{\textbf{x}}\in
\Reali\left(\bigwedge_{i=1}^p (|\X^i|^2\leq r_i^2)\wedge \barphi
\wedge U>0
\right)_{\limit}$ it now suffices to show that if $\mathcal{\textbf{x}} \in
\Reali(\phi_\alpha)\cap \overline{B_{\textbf{k}}(0,\textbf{r})}$ and
$\bar{Q}(\mathcal{\textbf{x}})=0$, then $\mathcal{\textbf{x}} $
belongs to $ \Reali\left(\bigwedge_{i=1}^p
(|\X^i|^2\leq r_i^2 )\wedge \barphialpha
\wedge U>0
\right)_{\limit}$.

Let $\mathcal{\textbf{x}}\in \Reali(\phi_\alpha)\cap \overline{B_{\textbf{k}}(0,\textbf{r})}$ and suppose
$\bar{Q}(\mathcal{\textbf{x}})=0$. Let $\mathcal{Q}\subseteq \mathcal{P}$
consist of the polynomials of $\mathcal{P}$ appearing
in $\phi_\alpha$.
Let $\textbf{v}\in \mathbb{R}^k$ be generic, and
denote
$\widehat{P_i}(U)=\bar{P_i}(\mathcal{\textbf{x}}+U\textbf{v})$,
$\widehat{Q}(U)=\bar{Q}(\mathcal{\textbf{x}}+U\textbf{v})$, and
$\widehat{F_i}(U)=\bar{F}(\mathcal{\textbf{x}}+U\textbf{v})$.
Note that
\begin{eqnarray}
\label{eqn:multiplicity-two}
\widehat{P_i} = \widehat{F_i}\widehat{Q}, \nonumber \\
\widehat{P_i}(0) = \widehat{Q}(0) = \widehat{F_i}(0) = 0.
\end{eqnarray}

\newcommand{\hatPi}{\widehat{P_i}} As in the proof of Proposition
\ref{prop:sectionfivemain}, if $F_i\in \mathcal{Q}$ is not the zero
polynomial then
$\widehat{P_i}$ is not
identically zero.  Since $\phi_\alpha$ consists of a conjunction of
equalities and
$$\bigwedge_{F\in \mathcal{Q} \atop F\not\equiv 0} F=0 \iff \bigwedge_{F\in
  \mathcal{Q}} F=0,$$ we may assume that $\mathcal{Q}$ does not contain the zero
polynomial.  Under this assumption,
we have that for every $F_i\in \mathcal{Q}$ the univariate polynomial
$\widehat{P_i}$ is not identically zero.  As in the proof of
Proposition \ref{prop:sectionfivemain},
there exists $t_0\in \mathbb{R}_+$ such
that for all $t$, $0<t<t_0$, we have
$\mathcal{\textbf{x}}+t\textbf{v}\in B_{\textbf{k}}(0,\textbf{r})$.
Denoting by $\nu_i = \mathrm{mult}_0(\hatPi)$ and $\mu =
\mathrm{mult}_0(\hatQ)$, we have from (\ref{eqn:multiplicity-two}) that
$\nu_i > \mu$ for all $i=1,\dots,s$.

Let
$$\begin{aligned} \hatPi(U)&=\sum_{j=\nu_i}^{\deg_U \hatPi} c_jU^j =
  U^{\nu_i} \cdot \sum_{j=0}^{\deg_U \hatPi - \nu_i} c_{\nu_i+j}U^j =
  c_{\nu_i} U^{\nu_i} + \text{ (higher order terms)},
  \\ \hatQ(U)&=\sum_{j=\mu}^{\deg_U \hatQ} d_j^j = U^\mu \cdot
  \sum_{j=0}^{\deg_U \hatQ - \mu} d_{\mu+j}U^j = d_\mu U^\mu + \text{
    (higher order terms)} \end{aligned}$$ where $d_\mu \neq 0$ and
$c_{\nu_i}\neq 0 $.

Then we have $$\begin{aligned} \hatPi^2(U)&=c_{\nu_i}^2 U^{2\nu_i} +
  \text{ (higher order terms)}, \\
  \hatQ^2(U)&=d_\mu^2 U^{2\mu} +
  \text{ (higher order terms)}, \\
  D(t):=U(\hatQ^2(U)-U^N)&= U(d_\mu^2 U^{2\mu} +
  \text{ (higher order terms)}-U^N), \\
  D(U)-\hatPi^2(U)&= d_\mu^2 U^{2\mu+1} + \text{ (higher order terms)} -
      U^{N+1}.
\end{aligned}$$

Since $\mu \leq \deg(\bar{Q})$ and $N = 2 \deg(\bar{Q}) + 1$,
we have that $2\mu+1 < N+1$. Hence, there exists $t_{1,i}\in \mathbb{R}_+$ such that for all $t$, $0 < t < t_{1,i}$,
we have that $D(t)-\hatPi^2(t)\geq 0$, and thus
$\mathcal{\textbf{x}}+t\textbf{v}$ satisfies
$$\bar{P}_i^2(\mathcal{\textbf{x}}+t\textbf{v})\leq t(\bar{Q}^2(\mathcal{\textbf{x}}+t\textbf{v})-t^N).$$
Let $t_1=\min\{t_{1,1},\dots,t_{1,s}\}$.
Let $t_2 = (\frac{\delta}{|\textbf{v}|^2+1})^{1/2}$ and note that
for all $t\in \mathbb{R}$, $0<t< t_2$, we have $(|\textbf{v}|^2+1)t^2<\delta$.
Finally, if $t$ satisfies $0<t< \min\{t_0,t_1,t_2\}$ then
$$(\mathcal{\textbf{x}}+t\textbf{v},t)\in \Reali\left(\bigwedge_{i=1}^p
(|\X^i|^2\leq r_i^2)\wedge \barphialpha
\wedge U>0
\right)$$
and
$$|\mathcal{\textbf{x}}-(\mathcal{\textbf{x}}+t\textbf{v})|^2+t^2\ = \ (|\textbf{v}|^2)t^2 <
\delta,$$
and so we have shown that
$$\mathcal{\textbf{x}}\in \Reali\left(\bigwedge_{i=1}^p (|\X^i|^2\leq r_i^2)\wedge
\barphialpha
\wedge U>0
\right)_{\limit} .$$
\zz

Using the same notation as in Proposition \ref{prop:level1} above:
\begin{corollary}
\label{cor:level1}
Let $\phi$ be a $\mathcal{P}$-formula, containing no negations
and no inequalities, with 
$\mathcal{P} \subset \R[X_1,\ldots,X_k]$
with 
$\mathcal{P}\in \mathcal{A}_{k,a}$. 
Then, there exists a family
of polynomials $\mathcal{P}' \subset \R[X_1,\ldots,X_k,U]$, 
and a 
$\mathcal{P}'$-formula $\bar{\phi}$ 
satisfying (\ref{eqn:level1}), and such that  
$\mathcal{P}'\in \mathcal{A}^{\mathrm{div-free}}_{k+1,(k+a)(a+2)}$. 
\end{corollary}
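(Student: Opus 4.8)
\proof[Proof proposal for Corollary \ref{cor:level1}]
The plan is to deduce the corollary from Proposition \ref{prop:level1}: first normalize $\mathcal{P}$ so that it contains at most $a+k$ polynomials; then use Lemma \ref{lem:equivalence} to replace each rational additive representation of an $F_i$ by a pair of polynomials $(P_i,Q_i)$ with $F_iQ_i=P_i$; then apply Proposition \ref{prop:level1} verbatim; and finally bound the division-free additive complexity of every polynomial occurring in $\bar\phi$, using the standard fact that in a straight-line program multiplications and squarings cost nothing, so only the additions have to be counted.

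For the normalization, I would invoke Remark \ref{rem:zero}: since $\phi$ has additive format bounded by $(a,k)$, it is equivalent to a formula with the same property whose polynomial set $\mathcal{P}$ has cardinality at most $a+k$, obtained by keeping the at most $a$ polynomials of $\mathcal{P}$ of positive additive complexity (each contributes at least $1$ to $a=\sum a_i$) and replacing all monomials of $\mathcal{P}$ by the coordinate functions $X_1,\dots,X_k$. This substitution introduces neither negations nor inequalities, so one may assume from the outset that $\mathcal{P}=\{F_1,\dots,F_s\}$ with $s\le a+k$ and $\sum_{i=1}^s a_i\le a$, where $a_i$ is the additive complexity of $F_i$ (with $a_i=0$, $Q_i:=1$, $P_i:=F_i$ for those $F_i$ equal to a coordinate function).

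Next, for each $i$ I would fix an additive representation of $F_i$ of length $a_i$ of the special form provided by Lemma \ref{lem:equivalence}, so that every intermediate polynomial $Q_{ij}$ is built using exponents in $\mathbb{N}$ only, and only the final exponents lie in $\mathbb{Z}$. Splitting the final monomial times the $Q_{ij}^{\eta_{ij}}$ into the product of the factors with negative exponents (call it $Q_i$) and the product of the remaining factors (call it $P_i$) yields $P_i,Q_i\in\mathbb{R}[X_1,\dots,X_k]$ with $F_iQ_i=P_i$. Forming $\bar Q=\prod_j Q_j$ and $\bar P_i=P_i\prod_{j\neq i}Q_j$ as in Proposition \ref{prop:level1} and applying that proposition (with $n=1$, $\mathbf{k}=(k)$) produces the $\mathcal{P}'$-formula $\bar\phi$ obtained from $\phi$ by replacing each atom $F_i=0$ with $\bar P_i^2-U(\bar Q^2-U^N)\le 0$, $N=2\deg(\bar Q)+1$, and this $\bar\phi$ satisfies (\ref{eqn:level1}); thus $\mathcal{P}'$ consists of the $s$ polynomials $\bar P_i^2-U\bar Q^2+U^{N+1}\in\mathbb{R}[X_1,\dots,X_k,U]$.

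Finally I would carry out the complexity count. The key point is that all the polynomials $Q_{ij}$, over all $i$ and $j$ simultaneously, can be computed using a total of $\sum_i a_i\le a$ additions; hence, as products and squares are free, each $\bar Q$ and each $\bar P_i$ has division-free additive complexity at most $a$, and therefore each $\bar P_i^2-U\bar Q^2+U^{N+1}$ has division-free additive complexity at most $a+2$ (one addition to form $\bar P_i^2-U\bar Q^2$, one to add $U^{N+1}$). Summing over the at most $a+k$ polynomials of $\mathcal{P}'$ gives $\sum_{G\in\mathcal{P}'}(\text{division-free additive complexity of }G)\le(k+a)(a+2)$, that is, $\mathcal{P}'\in\mathcal{A}^{\mathrm{div-free}}_{k+1,(k+a)(a+2)}$, as claimed. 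I expect the only genuinely delicate point to be the interaction of the two bounds that feed into $(k+a)(a+2)$: one must use Remark \ref{rem:zero} to keep the \emph{number} of polynomials under control (arbitrarily many monomial atoms collapse to the $k$ coordinate functions), while at the same time sharing the straight-line computation of the $Q_{ij}$ across all $i$ so that the additive complexity of each $\bar P_i,\bar Q$ stays $\le a$ rather than growing with $s$; once these two observations are in place, the rest is routine bookkeeping in which only additions are counted.
\zz
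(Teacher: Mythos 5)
Your proposal is correct and follows exactly the route the paper intends: its proof is stated as ``immediate from Lemma \ref{lem:equivalence}, Remark \ref{rem:zero}, and the definition of $\bar{\phi}$,'' and you have simply expanded those three ingredients, including the bookkeeping showing each $\bar{P_i}^2-U\bar{Q}^2+U^{N+1}$ has division-free additive complexity at most $a+2$ and that there are at most $k+a$ such polynomials. No gaps.
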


\begin{proof}
The proof is immediate from 
Lemma \ref{lem:equivalence}, Remark \ref{rem:zero}, and 
the definition of $\bar{\phi}$.
\end{proof}

\begin{definition}
Let $\Phi$ be a $\mathcal{P}$-formula, $\mathcal{P}\subseteq \mathbb{R}[\X_1,\dots,\X_k]$, and say
that $\Phi$ is a \emph{$\mathcal{P}$-closed formula} if the formula $\Phi$ contains
no negations and all the inequalities in atoms of $\Phi$ are weak inequalities.
\end{definition}

Let $\mathcal{P} = \{F_1,\ldots,F_s\} \subset \mathbb{R}[X_1,\ldots,X_k]$, and
$\Phi$ a $\mathcal{P}$-closed formula.

For $R \in \mathbb{R}_+$, let $\Phi_R$ denote the formula $\Phi \wedge (|\X|^2 -R^2
\leq 0)$.

Let $\Phi^\dagger$ be the formula obtained from $\Phi$
by replacing each
occurrence of the atom $F_i\ast 0$, $\ast\in \{=,\leq,\geq\}$,
$i=1,\dots,s$, with
$$
\begin{aligned}F_i-V_i^2=0 & \mbox{ if } \ast \in \{\leq\}, \\
-F_i-V_i^2= 0 &  \mbox{ if } \ast \in\{\geq\}, \\
F_i=0 & \mbox{ if }\ast \in \{=\},
\end{aligned}
$$
and for
$R, R' \in \mathbb{R}_+$, let $\Phi^\dagger_{R,R'}$ denote the
formula
\[
\Phi^\dagger \wedge (U_1^2 + |\X|^2 -R^2 = 0) \wedge (U_2^2 +
|\V|^2 - R'^2 = 0).
\]

We have
\begin{proposition}{\label{prop:sectionfivemain2}}
\[
\Reali(\Phi) = \pi_{[1,k]}(\Reali(\Phi^\dagger)),
\]
and for all $0 < R \ll R'$,
\[
\Reali(\Phi_R) = \pi_{[1,k]}(\Reali(\Phi^\dagger_{R,R'})),
\]
\end{proposition}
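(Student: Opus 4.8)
The plan is to prove the two displayed identities in turn, in both cases by the classical device of absorbing a weak inequality — or a ball constraint $|\cdot|^2\le\rho^2$ — into a single fresh \emph{slack} variable whose square equals the relevant nonnegative quantity, and then using that, because $\Phi$ (hence $\Phi^\dagger$) has no negations, an equivalence proved at the level of atoms propagates through $\wedge$ and $\vee$ by a straightforward induction on the structure of the formula.

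First I would treat $\Reali(\Phi)=\pi_{[1,k]}(\Reali(\Phi^\dagger))$ atom by atom. For $\x\in\mathbb{R}^k$, an inequality atom $F_i\ast 0$ (with $\ast\in\{\le,\ge\}$) holds at $\x$ exactly when the corresponding equation of $\Phi^\dagger$, of the shape $F_i\mp V_i^2=0$, is solvable in the fresh variable $V_i$ — this is the familiar encoding of a weak inequality as an equation in a slack variable, $g\le 0\iff\exists v\,(g+v^2=0)$ and $g\ge 0\iff\exists v\,(g-v^2=0)$ — while an equality atom $F_i=0$ is copied verbatim and introduces no new variable. Since distinct inequality atoms of $\Phi$ get distinct slack variables, these solvability choices are independent, and the induction over the negation-free formula gives: $\x\in\Reali(\Phi)$ iff some $\V$ has $(\x,\V)\in\Reali(\Phi^\dagger)$, i.e.\ iff $\x\in\pi_{[1,k]}(\Reali(\Phi^\dagger))$. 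As a by-product, since every atom of $\Phi^\dagger$ is a polynomial equation, $\Reali(\Phi^\dagger)$ is a finite union of real algebraic sets, hence itself a real algebraic set — a fact used tacitly in the sequel.

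For the second identity I would bootstrap from the first. The equation $U_1^2+|\X|^2-R^2=0$ is solvable in $U_1$ exactly when $|\X|^2\le R^2$, so it plays, for the ball constraint of $\Phi_R$, the role the slack variables play for the inequality atoms; dropping $U_1$ and $U_2$ from a point of $\Reali(\Phi^\dagger_{R,R'})$ lands in $\Reali(\Phi^\dagger)$, so by the first part its $\X$-projection lies in $\Reali(\Phi)$, and together with $|\X|^2\le R^2$ this gives $\pi_{[1,k]}(\Reali(\Phi^\dagger_{R,R'}))\subseteq\Reali(\Phi_R)$. The reverse inclusion is the only delicate point, and it is exactly where the hypothesis $0<R\ll R'$ enters: the extra constraint $U_2^2+|\V|^2-R'^2=0$ forces $|\V|^2\le R'^2$, so one must check this costs no point when $R'$ is large. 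If $\x\in\Reali(\Phi_R)$ then $|\x|\le R$, and by the first part $\x$ has a lift to $\Reali(\Phi^\dagger)$ that can be taken with $V_i^2=|F_i(\x)|$ for every slack variable, whence $|\V|^2\le C_R$, where $C_R$ is the sum, over the inequality atoms of $\Phi$, of the maxima of $|F_i|$ on $\overline{B_k(0,R)}$ — a finite, attained quantity by continuity of polynomials on a compact set. Thus for any $R'$ with $R'^2\ge C_R$ one may also solve $U_1^2=R^2-|\x|^2$ and $U_2^2=R'^2-|\V|^2$ and obtain a point of $\Reali(\Phi^\dagger_{R,R'})$ over $\x$. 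I expect the only genuine (and mild) obstacle to be pinning down the threshold $C_R$ and checking it is finite; everything else is bookkeeping with the fresh variables and the induction over the negation-free formula.
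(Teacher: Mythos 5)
Your proposal is correct and takes essentially the same route the paper has in mind (its proof is simply ``Obvious.''): encode each weak inequality by a slack-variable equation, propagate atomwise equivalence through the negation-free formula, and observe that the canonical lift $V_i^2=|F_i(\x)|$ is bounded on $\overline{B_k(0,R)}$ by compactness, which is exactly why $R'\gg R$ makes the constraint $U_2^2+|\V|^2-R'^2=0$ solvable. One cosmetic remark: your signs follow the intended semantics ($g\le 0\iff\exists v\,(g+v^2=0)$), whereas the paper's displayed definition of $\Phi^\dagger$ appears to have the $\le$/$\ge$ cases swapped, and even when two atoms share the same $V_i$ the choice $V_i^2=|F_i(\x)|$ still reproduces the truth value of every atom, so your argument goes through unchanged.
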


\proof
Obvious.
\zz

Note that, for $0< R \ll R'$, $\pi_{[1,k]}|_{
\Reali(\Phi^\dagger_{R,R'})}$ is a continuous,
semi-algebraic surjection onto
$\Reali(\Phi_R)$.
Let $\pi_{R,R'}$ denote the map
$\pi_{[1,k]}|_{\Reali(\Phi^\dagger_{R,R'})}$.

\begin{proposition}
\label{prop:reduction2}
We have that  $\mathcal{J}^p_{\pi_{R,R'}}
(\Reali(\Phi^\dagger_{R,R'}))$
is $p$-equivalent to $\pi_{[1,k]}(\Reali(\Phi^\dagger_{R,R'}))$.
Moreover, for any two formulas $\Phi,\Psi$, the realizations
$\Reali(\Phi)$ and $\Reali(\Psi)$ are homotopy equivalent if,
for all $1 \ll R \ll R'$,
$$\Reali(\mathcal{J}^p_{\pi_{R,R'}}(\Phi^\dagger_{R,R'}))\simeq
\Reali(\mathcal{J}^p_{\pi_{R,R'}}(\Psi^\dagger_{R,R'}))$$
are homotopy equivalent for some $p > k$.
\end{proposition}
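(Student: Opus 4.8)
The plan is to read off the first assertion from Proposition \ref{prop:5} and to deduce the second by chaining $p$-equivalences in the manner of Proposition \ref{prop:top_basic}, using the conical structure at infinity to pass from the truncated sets $\Reali(\Phi_R)$ back to $\Reali(\Phi)$.

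First I would check that the hypotheses of Proposition \ref{prop:5} are met by $\pi_{R,R'}$. Since $\Phi$ is $\mathcal{P}$-closed, every atom $F_i \ast 0$ of $\Phi$ is turned into an \emph{equality} in $\Phi^\dagger$, and the two further atoms of $\Phi^\dagger_{R,R'}$ are equalities as well; hence $\Reali(\Phi^\dagger_{R,R'})$ is a closed semi-algebraic set, and the equations $U_1^2+|\X|^2=R^2$, $U_2^2+|\V|^2=R'^2$ bound every variable, so it is in fact compact. Therefore $\pi_{R,R'}=\pi_{[1,k]}|_{\Reali(\Phi^\dagger_{R,R'})}$ is a closed, continuous semi-algebraic surjection onto $\pi_{[1,k]}(\Reali(\Phi^\dagger_{R,R'}))=\Reali(\Phi_R)$ (the equality by Proposition \ref{prop:sectionfivemain2}, valid for $R\ll R'$), with compact source; Proposition \ref{prop:5} then gives directly that the map $J^p(\pi_{R,R'})\colon \mathcal{J}^p_{\pi_{R,R'}}(\Reali(\Phi^\dagger_{R,R'}))\to \pi_{[1,k]}(\Reali(\Phi^\dagger_{R,R'}))$ is a $p$-equivalence, which is the first claim.

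For the second claim, I would fix $p>k$ and pick $1\ll R\ll R'$ large enough that $\Reali(\Phi_R)\simeq \Reali(\Phi)$ and $\Reali(\Psi_R)\simeq \Reali(\Psi)$ by the conical structure at infinity (as in the proof of Theorem \ref{thm:algebraic}, cf. \cite[page 188]{BPRbook2}) and that Proposition \ref{prop:sectionfivemain2} applies to both $\Phi$ and $\Psi$; the hypothesis of the proposition holds for these $R,R'$. Set $C=\mathcal{J}^p_{\pi_{R,R'}}(\Reali(\Phi^\dagger_{R,R'}))$ and $C'=\mathcal{J}^p_{\pi_{R,R'}}(\Reali(\Psi^\dagger_{R,R'}))$. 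By the first claim $C$ is $p$-equivalent to $A:=\Reali(\Phi_R)$ and $C'$ is $p$-equivalent to $B:=\Reali(\Psi_R)$; by hypothesis there is a homotopy equivalence $C\to C'$, which, inducing isomorphisms on all $\boldsymbol{\pi}_i$, is in particular a $p$-equivalence. Since bijectivity on $\boldsymbol{\pi}_i$ for $i<p$ and surjectivity on $\boldsymbol{\pi}_p$ are both stable under composition, the composite $C\to C'\to B$ shows that $C$ is $p$-equivalent to $B$ as well.

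Finally, $A$ and $B$ are semi-algebraic subsets of $\mathbb{R}^k$, hence finite CW-complexes (by triangulability) of dimension at most $k$, and $C$ is likewise a finite CW-complex; as $p>k$ and $C$ is $p$-equivalent to both $A$ and $B$, Proposition \ref{prop:top_basic} yields $A\simeq B$, i.e.\ $\Reali(\Phi_R)\simeq \Reali(\Psi_R)$, and combining with the two conical-structure equivalences gives $\Reali(\Phi)\simeq \Reali(\Psi)$. I expect the only real point requiring care to be the verification that $\pi_{R,R'}$ is a \emph{closed} map, hence the reduction via the variables $U_1,U_2$ to a compact source, together with the coordinated choice of $R,R'$ making the conical-structure equivalences and the identities of Proposition \ref{prop:sectionfivemain2} hold simultaneously for $\Phi$ and $\Psi$; the rest is formal.
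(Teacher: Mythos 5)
Your proof is correct and follows essentially the same route as the paper, whose proof is a one-line appeal to Propositions \ref{prop:5}, \ref{prop:top_basic} and \ref{prop:sectionfivemain2}; you simply spell out the verification of the hypotheses (closedness/compactness of $\Reali(\Phi^\dagger_{R,R'})$, hence that $\pi_{R,R'}$ is a closed surjection) and the chaining of $p$-equivalences, together with the conical-structure step relating $\Reali(\Phi_R)$ to $\Reali(\Phi)$, which the paper leaves implicit.
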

\proof
Immediate from Proposition \ref{prop:5} and
Propositions \ref{prop:top_basic} and \ref{prop:sectionfivemain2}.
\zz

Suppose that 
$\Phi$ has additive format 
bounded by $(a,k)$, and suppose that the number of polynomials
appearing $\Phi$ is $s$, 
and without loss of generality
we can assume that $s \leq k+a$ (see 
Remark \ref{rem:zero}). 
Then the sum of the
additive complexities of the polynomials appearing in 
$\Phi^{\dagger}_{R,R'}$ is bounded by 
$3a+3s+2\leq 3a+3(a+k)+2\leq 6(k+a)$, 
and the formula $\Phi^{\dagger}_{R,R'}$ has additive
format bounded by 
$(6(k+a),2k+a+2)$.  

Consequently, the additive format of the formula 
$$\Theta_1\wedge \Theta_2^{\Phi^\dagger_{R,R'}} \wedge \Theta_3^{\pi_{R,R'}}$$ is bounded by $(M,N)$,
$$
\begin{aligned}
M &= (p+1)(6k+6a+1)+\textstyle{\binom{p+1}{2}}(4k+2a+3) \\
N &= (p+1)(2k+a+3)+\textstyle{\binom{p+1}{2}}. 
\end{aligned}
$$
In the above, the estimates of Proposition \ref{prop:calDp} suffice, with $(a,k)$ replaced by $(6(k+a), 2k+a+2)$.  Now, applying Corollary \ref{cor:level1} we have that there exists a $\mathcal{P}'$-formula 
$$\overline{\left(\Theta_1\wedge \Theta_2^{\Phi^\dagger_{R,R'}} \wedge \Theta_3^{\pi_{R,R'}}\right)}$$
which satisfies Equation \ref{eqn:level1} and such that the \emph{division-free} additive format of this formula is bounded by 
$((N+M)(M+2) ,N+1 )$.  Finally, let $\mathcal{J}^{p}_{\pi_{R,R'}}(\Phi^\dagger_{R,R'})^\star$ denote
the formula,
with last variable $U$,
\begin{equation}
{\label{eqn:star}}
\Omega^R \wedge \overline{\left(\Theta_1 \wedge \Theta_2^{\Phi^\dagger_{R,R'}}\wedge \Theta_3^{\pi_{R,R'}}\right)}
\wedge U>0, 
\end{equation}
and we have that the \emph{division-free} additive format of $\mathcal{J}^p_{\pi_{R,R'}}(\Phi^\dagger_{R,R'})^\star$ is bounded by $(M',N+1)$, 
$$M'=(p+1)(2k+a+3)+(N+M)(M+2).$$
Note that $M'\leq 5M^2$.

We have shown the following, 

\begin{proposition}
\label{prop:boundonT}
Suppose that the sum of the  additive complexities of
$F_i, 1\leq i \leq s$, is bounded by $a$. Then, the semi-algebraic set
$\Reali(\mathcal{J}^{p}_{\pi_{R,R'}}(\Phi^{\dagger}_{R,R'})^\star)$ 
can be defined by a
$\mathcal{P}'$-formula with
$\mathcal{P}' \in
\mathcal{A}^{\mathrm{div-free}}_{5M^2,N+1}$, 
$$\begin{aligned}
M&= (p+1)(6k+6a+1)+2\textstyle{\binom{p+1}{2}}(4k+2a+3)\\
N&=  (p+1)(2k+a+3)+\textstyle{\binom{p+1}{2}}.
\end{aligned}$$
\end{proposition}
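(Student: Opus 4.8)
\proof[Proof of Proposition \ref{prop:boundonT}]
The statement merely records the outcome of the format computation carried out in the two paragraphs immediately preceding it, so the plan is to organize that computation into a proof; it uses nothing beyond Remark \ref{rem:zero}, Proposition \ref{prop:calDp}, and Corollary \ref{cor:level1}. First I would invoke Remark \ref{rem:zero} to assume that at most $k+a$ polynomials appear in $\Phi$. A routine count then shows that the polynomials occurring in $\Phi^\dagger_{R,R'}$ (namely $F_i$, or $\pm F_i - V_i^2$, together with $U_1^2+|\X|^2-R^2$ and $U_2^2+|\V|^2-R'^2$) have total additive complexity at most $6(k+a)$, and that $\Phi^\dagger_{R,R'}$ has additive format bounded by $(6(k+a),\,2k+a+2)$; the crucial feature here, which I would flag explicitly, is that $\Phi^\dagger_{R,R'}$ contains neither negations nor inequalities.

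Next I would feed this to Proposition \ref{prop:calDp} with $(a,k)$ replaced by $(6(k+a),2k+a+2)$ --- the point being that, since $\pi_{R,R'}$ is a coordinate projection, the polynomials $|f(\X^i)-f(\X^j)|^2-A_{ij}$ appearing in $\Theta_3^{\pi_{R,R'}}$ have exactly the same degree and monomial structure as those of $\Upsilon$, so the format estimates established there for $\mathcal{D}^p_\eps$ apply verbatim. This bounds the additive format of $\Theta_1\wedge\Theta_2^{\Phi^\dagger_{R,R'}}\wedge\Theta_3^{\pi_{R,R'}}$ by $(M,N)$ with $M,N$ as in the statement, and this formula is again free of negations and inequalities, so Corollary \ref{cor:level1} applies and yields the $\mathcal{P}'$-formula $\overline{\bigl(\Theta_1\wedge\Theta_2^{\Phi^\dagger_{R,R'}}\wedge\Theta_3^{\pi_{R,R'}}\bigr)}$ satisfying Equation \ref{eqn:level1}, of \emph{division-free} additive format bounded by $\bigl((N+M)(M+2),\,N+1\bigr)$. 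Finally, conjoining with $\Omega^R$ (whose polynomials $|\X^i|^2-R^2$, $0\leq i\leq p$, and $|\T|^2-1$ together contribute at most $(p+1)(2k+a+3)$ to the division-free additive complexity) and with the atom $U>0$ (contributing nothing) gives $\mathcal{J}^p_{\pi_{R,R'}}(\Phi^\dagger_{R,R'})^\star$, of division-free additive format $(M',N+1)$ with $M'=(p+1)(2k+a+3)+(N+M)(M+2)$; bounding this crudely by $5M^2$ gives the asserted $\mathcal{P}'\in\mathcal{A}^{\mathrm{div-free}}_{5M^2,N+1}$.

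The one genuine subtlety --- and the step I would be most careful about --- is the order of operations: Corollary \ref{cor:level1} requires a formula with no inequalities, so $\Omega^R$, which carries the ball inequalities $|\X^i|^2\leq R^2$, must be conjoined only \emph{after} the division-free rewriting and never before. This is exactly why $\Phi^\dagger$ is built with the squared slack variables $V_i$ (turning the sign conditions into equations) and why $\Omega^R$ is introduced only at the final step. Apart from this, the argument is a straightforward, if slightly tedious, propagation of the format parameters through each construction, with no new idea required.
\zz
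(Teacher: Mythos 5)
Your proposal is correct and follows essentially the same route as the paper: reduce to $s\leq k+a$ via Remark \ref{rem:zero}, bound the additive format of $\Phi^\dagger_{R,R'}$ by $(6(k+a),2k+a+2)$, transfer the estimates of Proposition \ref{prop:calDp} to $\Theta_1\wedge\Theta_2^{\Phi^\dagger_{R,R'}}\wedge\Theta_3^{\pi_{R,R'}}$, pass to a division-free formula via Corollary \ref{cor:level1} with format $((N+M)(M+2),N+1)$, and only then conjoin $\Omega^R$ and $U>0$ to get $M'=(p+1)(2k+a+3)+(N+M)(M+2)\leq 5M^2$. Your explicit remarks that $\Theta_3^{\pi_{R,R'}}$ has the same structure as $\Upsilon$ because $\pi_{R,R'}$ is a coordinate projection, and that the ball inequalities must stay outside the barred formula, are exactly the (implicit) points the paper relies on.
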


Finally, we obtain

\begin{proposition}
\label{prop:closedandbounded}
The number of distinct homotopy types of 
semi-algebraic subsets of
$\mathbb{R}^k$ defined by $\mathcal{P}$-closed formulas
with $\mathcal{P} \in \mathcal{A}_{a,k}$ is
bounded by
$2^{(k(k+a))^{O(1)}}$.
\end{proposition}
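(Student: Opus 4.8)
The strategy is to reduce, via the $\dagger$-construction and the fibered join, to a one-parameter family of sets of bounded \emph{division-free} additive format, and then to invoke Theorem~\ref{thm:main_weak}. Let $\Phi$ be a $\mathcal{P}$-closed formula with $\mathcal{P}=\{F_1,\dots,F_s\}\in\mathcal{A}_{k,a}$; by Remark~\ref{rem:zero} we may assume $s\leq k+a$, and we fix $p=k+1$. First I would pass to a compact representative: by the conical structure at infinity of semi-algebraic sets there is $R_\Phi\in\mathbb{R}_+$ with $\Reali(\Phi)$ semi-algebraically homeomorphic to $\Reali(\Phi_R)$ for all $R>R_\Phi$, so we choose $1\ll R\ll R'$ with $R>R_\Phi$. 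By Proposition~\ref{prop:sectionfivemain2}, $\Reali(\Phi_R)=\pi_{[1,k]}(\Reali(\Phi^\dagger_{R,R'}))$, and $\pi_{R,R'}$ is a closed continuous semi-algebraic surjection from the closed bounded set $\Reali(\Phi^\dagger_{R,R'})$ onto $\Reali(\Phi_R)$.

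Next I would introduce the one-parameter family
\[
\TT \ :=\ \Reali\bigl(\mathcal{J}^{p}_{\pi_{R,R'}}(\Phi^{\dagger}_{R,R'})^\star\bigr)\ \subset\ \mathbb{R}^{N}\times\mathbb{R}_+,
\]
with the last variable $U$ playing the role of the parameter. By Proposition~\ref{prop:boundonT} the defining formula of $\TT$ has division-free additive format bounded by $(5M^2,\,N+1)$, where $M,N$ are the explicit polynomials in $k,a,p$ displayed there; for $p=k+1$ both $M$ and $N$ are bounded by a polynomial in $k(k+a)$, and these bounds are independent of the chosen $R,R'$, which enter only as real constants and through degrees, neither of which affects division-free additive complexity or the number of variables. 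One checks directly that $\TT$ is bounded (it is constrained by $\Omega^R$) and that $\TT_t$ is closed for each $t>0$, since substituting $U=t>0$ into the defining formula turns every atom into a weak polynomial inequality or an equation. By Corollary~\ref{cor:level1} (Equation~\ref{eqn:level1}) applied to the formula $\Theta_1\wedge\Theta_2^{\Phi^\dagger_{R,R'}}\wedge\Theta_3^{\pi_{R,R'}}$, together with the definition~(\ref{eqn:star}) of the starred formula,
\[
\TT_{\limit}\ =\ \mathcal{J}^{p}_{\pi_{R,R'}}\bigl(\Reali(\Phi^{\dagger}_{R,R'})\bigr).
\]

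Now I would apply Theorem~\ref{thm:main_weak} with $(k,a)$ replaced by $(N,5M^2)$: it yields a finite collection $\mathcal{S}_{N,5M^2}$ of semi-algebraic sets, depending only on $k$ and $a$, of cardinality $2^{O(N(N^2+5M^2))^8}=2^{(k(k+a))^{O(1)}}$, such that $\TT_{\limit}$ is homotopy equivalent to some member of $\mathcal{S}_{N,5M^2}$. Finally, Proposition~\ref{prop:reduction2}, applied with $p=k+1>k$, shows that the homotopy type of $\Reali(\Phi)$ is determined by that of $\Reali(\mathcal{J}^p_{\pi_{R,R'}}(\Phi^\dagger_{R,R'}))=\TT_{\limit}$. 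Hence the number of distinct homotopy types among all such $\Reali(\Phi)$ is at most $\card\,\mathcal{S}_{N,5M^2}=2^{(k(k+a))^{O(1)}}$, as claimed.

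The only genuinely delicate point, and the one I would be most careful about, is the uniformity: a \emph{single} finite collection $\mathcal{S}_{N,5M^2}$ must serve for all $\Phi$ simultaneously. This requires that the truncation radius $R$ (coming from the conical structure, hence a priori depending on $\Phi$), the auxiliary radius $R'$, and the exponent $N_0=2\deg(\bar Q)+1$ appearing inside the $\dagger$- and starred constructions all influence only the \emph{constants} and \emph{degrees} of the defining formula of $\TT$, and never its division-free additive format or ambient dimension, so that the hypotheses of Theorem~\ref{thm:main_weak} hold with fixed parameters $(N,5M^2)$ depending only on $k$ and $a$. The remaining format bookkeeping has already been carried out in Propositions~\ref{prop:calDp} and~\ref{prop:boundonT}.
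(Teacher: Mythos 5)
Your proposal is correct and follows the same chain of reductions as the paper: conical structure at infinity to pass to $\Reali(\Phi_R)$, the $\dagger$-construction to eliminate inequalities, the fibered join $\mathcal{J}^{k+1}_{\pi_{R,R'}}$ together with Propositions \ref{prop:5}/\ref{prop:top_basic} (packaged as Proposition \ref{prop:reduction2}) to control the homotopy type of $\Reali(\Phi_R)$, the starred formula and Corollary \ref{cor:level1} to realize the join set as $\TT_{\limit}$ of a one-parameter family of bounded division-free additive format (Proposition \ref{prop:boundonT}), and finally Theorem \ref{thm:main_weak}. The one place you genuinely diverge is the uniformity issue you flag at the end: the paper resolves it by invoking the finiteness theorem of van den Dries (\cite[Theorem 3.5]{Dries}) to replace the whole class by finitely many representative formulas $\Phi_1,\dots,\Phi_\ell$ and then fixing a single radius $R=\max_i R_{\mathcal{P}_i}$ (and one $R'\gg R$) that works for all of them, whereas you let $R,R'$ depend on $\Phi$ and observe directly that they, like the exponent $2\deg(\bar Q)+1$, enter the defining formula only through real constants and monomial degrees, neither of which affects the division-free additive format or the ambient dimension, so Theorem \ref{thm:main_weak} applies with the same parameters $(N,5M^2)$ for every $\Phi$. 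This shortcut is valid, since the hypotheses and the collection $\mathcal{S}$ in Theorem \ref{thm:main_weak} depend only on the format; the small price is that when comparing two formulas with different radii you must note (as your "determined by" phrasing implicitly does) that a homotopy equivalence between the two join sets composed with the $p$-equivalences still gives a common source that is $p$-equivalent to both $k$-dimensional realizations, so Proposition \ref{prop:top_basic} applies even though Proposition \ref{prop:reduction2} is stated for a common pair $R\ll R'$. In exchange you avoid any appeal to the Coste--van den Dries finiteness result at this step, which is a mild but genuine streamlining of the paper's argument.
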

\begin{proof}
Let $\mathcal{P} \in \mathcal{A}_{a,k}$.
By the conical structure at infinity of semi-algebraic sets
(see, for instance \cite[page 188]{BPRbook2})
there exists
$R_{\mathcal{P}}>0$
such that, for all $R>R_\mathcal{P}$
and every $\mathcal{P}$-closed formula $\Phi$,
the
semi-algebraic
sets $\Reali(\Phi_R), \Reali(\Phi)$ are
semi-algebraically homeomorphic.

For each $a,k\in \mathbb{N}$, there are only finitely many semi-algebraic homeomorphism types
of semi-algebraic sets described by a $\mathcal{P}$-formula having additive complexity at most $(a,k)$ \cite[Theorem 3.5]{Dries}.
Let $\ell\in \mathbb{N}$, $\mathcal{P}_i\in \mathcal{A}_{a,k}$, and $\Phi_i$ a $\mathcal{P}_i$-formula, $1\leq i \leq \ell$, such that every semi-algebraic set described by a formula
of additive complexity at most $(a,k)$ is semi-algebraically homeomorphic to $\Reali(\Phi_i)$ for some $i$, $1\leq i \leq \ell$.
Let $R=\max_{1\leq i \leq \ell} \{R_{\mathcal{P}_i}\}$ and $R'\gg R$.

Let $\Phi\in \{\Phi_i\}_{1\leq i \leq \ell}$.
By Proposition \ref{prop:reduction2} it suffices to bound the number
of distinct
homotopy types of the semi-algebraic
set
$\Reali(\mathcal{J}^{k+1}_{\pi_{R,R'}}(\Phi^\dagger_{R,R'}))$.
By Proposition \ref{prop:level1}, we have that
$$\Reali\left(
\mathcal{J}^{k+1}_{\pi_{R,R'}}(\Phi^\dagger_{R,R'})^\star
 \right)_{\limit}
=\Reali(\mathcal{J}^{k+1}_{\pi_{R,R'}}(\Phi^\dagger_{R,R'})).$$
By Proposition \ref{prop:boundonT},
the division-free additive format of the formula
$\mathcal{J}^{k+1}_{\pi_{R,R'}}(\Phi^\dagger_{R,R'})^\star$
is bounded by
$(2M,N)$,
where $p=k+1$.
The proposition now follows immediately from
 Theorem \ref{thm:main_weak}.
\end{proof}

\proof[Proof of Theorem \ref{thm:main1}]
Using the construction of Gabrielov and Vorobjov
\cite{GV07} one can reduce
the case of arbitrary semi-algebraic sets to that of
a
closed and bounded one,
defined by a $\mathcal{P}$-closed formula,
without changing asymptotically the complexity estimates (see for example
\cite{BV06}).
The theorem then follows directly from Proposition \ref{prop:closedandbounded}
above.
\zz
\end{subsection}

\begin{subsection}{Proof of Theorem \ref{thm:main}}
\begin{proof}[Proof of Theorem \ref{thm:main}]
The proof is identical to that of the proof of Theorem \ref{thm:main_weak},
except that we use
Theorem \ref{thm:main1} instead of
Theorem \ref{thm:additive}.
\end{proof}

\end{subsection}

\end{section}

\bibliographystyle{plain}
\bibliography{master}

\end{document}